\newtheorem{thm}{Theorem}[section]
\newtheorem{lem}[thm]{Lemma}
\newtheorem{prop}[thm]{Proposition}
\newtheorem{cor}[thm]{Corollary}
\theoremstyle{definition}
\newtheorem{rem}[thm]{Remark}
\newtheorem{defn}[thm]{Definition}
\newtheorem{ex}[thm]{Example}
\def\ie{\emph{i.e.}}
\def\eg{\emph{e.g. }}
\def\R{\mathbb{R}}
\def\Der{\text{Der}}
\def\HH{\mathit{HH}}
\def\AQ{\mathit{AQ}}
\def\Z{\mathbb{Z}}
\def\Q{\mathbb{Q}}
\def\F{\mathbb{F}}
\def\grf2{\mathrm{gr}\mathbb{F}_2}
\def\grvctq{\mathrm{gr}\mathit{Vct}_\Q}
\def\leq{\leqslant}
\def\geq{\geqslant}
\def\lie{\mathrm{Lie}}
\def\ra{\rightarrow}
\def\rlie1{1rL}
\def\nlie{nL}
\def\lie{L}
\def\rg1{1rG}
\def\ng{nG}
\def\aug1rg{1rG_\varepsilon}
\begin{document}
\title{A spectral sequence for the homology of a finite algebraic delooping}
\author{Birgit Richter}
\author{Stephanie Ziegenhagen}
\address{Fachbereich Mathematik der Universit\"at Hamburg,
Bundesstra{\ss}e 55, 20146 Hamburg, Germany}
\email{birgit.richter@uni-hamburg.de}
\email{stephanie.ziegenhagen@uni-hamburg.de}
\keywords{$E_n$-homology, Hochschild cohomology, Andr\'e-Quillen
  homology, $\Pi$-Algebras, Grothendieck spectral sequence, Hodge
  decomposition}
\subjclass[2000]{Primary 16E40, 18G40; Secondary 55P35}

\begin{abstract}
In the world of chain complexes $E_n$-algebras are the analogues of
based $n$-fold loop spaces in the category of topological spaces.
Fresse showed that operadic $E_n$-homology of an $E_n$-algebra
computes the homology of an $n$-fold algebraic delooping. The aim of
this paper is to construct two spectral sequences for calculating
these homology groups and to treat some concrete classes of examples
such as Hochschild cochains, graded polynomial algebras and chains on
iterated loop spaces. In characteristic zero we gain an identification
of the summands in Pirashvili's Hodge decomposition of higher order
Hochschild homology in terms of derived functors of indecomposables of
Gerstenhaber algebras and as the homology of exterior and symmetric
powers of derived K\"ahler differentials.
\end{abstract}

\maketitle

\section{Introduction}

The little $n$-cubes operad acts on and detects based $n$-fold loop
spaces \cite{maygils}. Its algebraic counterpart, the operad that is
given by (a
cofibrant replacement of) its reduced chains, is the so-called
$E_n$-operad and its algebras are $E_n$-algebras. In this sense,
$E_n$-algebras are algebraic analogues of based $n$-fold loop
spaces. Benoit Fresse constructed an $n$-fold bar construction for any
$E_n$-algebra $A_*$, $B^nA_*$ \cite{F3} and showed that the
$E_n$-homology of $A_*$, $H^{E_n}_*(A_*)$ is the homology of the
$n$-fold desuspension of  $B^nA_*$, thus $E_n$-homology calculates the
homology of an algebraic $n$-fold delooping.

In characteristic zero the operad $E_{n+1}$ is quasi-isomorphic to  its
homology, $H_*(E_{n+1})$. The homology of $E_{n+1}$ codifies $n$-Gerstenhaber
algebras. As one consequence the operations on the homology of any
$E_{n+1}$-algebra are given by the $n$-Gerstenhaber algebra
structure.
However, in finite characteristic this does not hold any longer: If
$A_*$ is an algebra over the operad $E_{n+1}$, then $H_*A_*$ has more
structure than an $n$-Gerstenhaber algebra. The homology
of a free $E_{n+1}$-algebra on a chain complex $C_*$ carries a restricted
$n$-Gerstenhaber structure and in addition there are Dyer-Lashof operations to
consider. Thus in these cases it is not enough to study the
operad $H_*(E_{n+1})$ in order to understand all homology operations, but we have to understand the associated monad that maps a graded $k$-module $M_*$ to
$H_*(E_{n+1}(M_*))$.

We start in Part 1 by developing a standard resolution spectral sequence in the
cases of fields of characteristic two and zero. We identify its
$E^2$-term as the derived functor of indecomposables with respect to a
shifted (restricted) Gerstenhaber algebra structure (Theorem
\ref{thm:e2term} and Theorem \ref{thm:e2rational}). The chain complex
of an $n$-fold loop space carries an $E_n$-algebra
structure. If the loop space is of the form $\Omega^n \Sigma^n X$ for
$n \geq 2$ and connected $X$, then our spectral sequence gives an easy
argument for the fact that $E_n$-homology of the chain algebra
hands back the reduced homology of $\Sigma^n X$.

As we can express
the indecomposables with respect to a Gerstenhaber structure as a
composite of two functors we get a
Grothendieck-type spectral sequence in the non-additive context by the
work of Blanc and Stover \cite{BS}. This spectral sequence converges
to the input of the $E^2$-page of the resolution spectral sequence.

In Part 2 we apply these spectral sequences for calculations of
$E_n$-homology.

By forgetting structure every commutative algebra can be viewed as an
$E_n$-algebra.  In some cases classical work of Cartan \cite{C} can
be used to identify $E_n$-homology groups of commutative algebras. We
extend these classes of examples by calculating $E_n$-homology for free graded
commutative algebras on one generator.

A different class of interesting examples of $E_2$-algebras is the
class of reduced Hochschild cochain algebras of  associative algebras:
For any vector space $V$, the tensor
algebra $TV$ is the free associative algebra generated by $V$. Taking
the composition with the reduced  Hochschild cochains,
$\bar{C}^*(-,-)$, we assign to any vector
space $V$ the  $E_2$-algebra $\bar{C}^*(TV,TV)$. One can ask, how free this
$E_2$-algebra is. For a free $E_2$-algebra on a vector space $V$,
$E_2$-homology gives $V$ back. Is the homology of the $2$-fold
delooping, \ie, $H^{E_2}_*(\bar{C}^*(TV,TV))$, close to $V$? We give a
positive answer for a one-dimensional vector space over the rationals
(Theorem \ref{thm:hhfree-onedim})
and describe some partial results for the case when $V$ is of
dimension two.

In characteristic
zero the resolution spectral sequence for calculating $E_n$-homology
of commutative algebras has trivial differentials from
the $E^2$-term onwards and we get a decomposition of
$E_n$-homology. We identify
the summands of the Hodge
decomposition of higher order Hochschild homology in the sense of
\cite{P} with derived functors of indecomposables of Gerstenhaber
algebras. This recovers the identification of the Hodge summands of
Hochschild homology of odd order as
$$ \HH_{m+1}^{(\ell)}(A;\Q) \cong
H_{m-\ell+1}(\Lambda^{\ell} (\Omega^1_{P_*|\Q} \otimes_{P_*} \Q))$$
for a free simplicial resolution $P_*$ of the commutative algebra $A$
but we extend this result (see Theorem \ref{thm:hodge-aq}) to the Hodge
summands of Hochschild homology of even order:
$$
\mathrm{Tor}_{m+1-\ell}^\Gamma(\theta^\ell, \mathcal{L}(A;\Q)) \cong
(\mathbb{L}_mQ_{(2k-1)}\bar{A})_{(2k-1)(\ell-1)} \cong
H_{m-\ell+1}(\mathrm{Sym}^\ell (\Omega^1_{P_*|\Q} \otimes_{P_*} \Q)).$$

\textbf{Outline of the paper}: In section \ref{sec:resspsq} we construct a
resolution spectral
sequence and relate its $E^1$-term to Cohen's expression for the homology of
free $C_{n+1}$-spaces. We explain the corresponding algebraic notions in
section \ref{sec:algebra-f_2} (case of $\F_2$) and section \ref{sec:algebra-q}
(rational case) and use these to
identify the $E^2$-term of the resolution spectral sequence. Section
\ref{sec:appl} contains
a reduction result for certain $n$-Gerstenhaber algebras that have an
underlying free graded commutative algebra structure and we apply this to the
example of the $E_{n+1}$-algebra of rational chains on a based
$(n+1)$-fold iterated  loop space. We set up a Blanc-Stover spectral
sequence that converges
to  the $E^2$-term of our resolution spectral sequence in section \ref{sec:bs}.

As a first class of examples we present a calculation of $E_n$-homology of
free graded commutative algebras in section \ref{sec:freegraded}.
We discuss some calculations of $E_2$-homology of reduced Hochschild cochains
for tensor algebras and group algebras in section
\ref{sec:hhcochains}. Section \ref{sec:hodge}
contains our results about Pirashvili's Hodge decomposition for higher order
Hochschild homology where we relate his description of the decomposition
summands in terms of functor homology to homology groups of Gerstenhaber
algebras and to the homology of symmetric (exterior) powers of derived K\"ahler
differentials.

\bigskip

\textbf{Notation}: In the following we work relative to a field $k$, which is
most of the
time specified to be $\Q$ or $\F_2$. We denote by $k x$ the $k$-vector
space with basis element $x$.

\subsection*{Acknowledgement}
The first author thanks the Isaac Newton Institute for
  Mathematical Sciences in Cambridge for its hospitality. The second
  author gratefully acknowledges support by the DFG. We thank Haynes Miller for
  several helpful emails and conversations and Ib Madsen for
  mentioning Anderson's spectral sequence.

\part{Spectral sequences for $E_n$-homology}

\section{A spectral sequence} \label{sec:resspsq}
We choose a $\Sigma$-cofibrant model of the operad $E_{n+1}$ and as $n$
will be fixed in the following, we call this model $E=(E(r))$. Usually
$E_{n+1}(0)$ is used to keep track of base-points, but we
use the  reduced version with $E(0)=0$.
In the following we work with
augmented $E$-algebras $\varepsilon\colon A_* \ra k$ and $E_{n+1}$-homology is an
invariant of non-unital $E$-algebras, so we will consider the
augmentation ideal $\bar{A}_* = \ker(\varepsilon)$. However we will frequently
switch to working with $A_*$ when considering invariants of unital
objects, \eg  Andr\'e-Quillen homology.

We use a free simplicial resolution to establish a standard spectral
sequence converging to the
$E_{n+1}$-homology of any $E$-algebra.
\begin{lem}
For any $E$-algebra $\bar{A}_*$ there is a spectral sequence
$$E^1_{s,t} \cong H_t(E^{\circ s}(\bar{A}_*)) \Rightarrow
H^{E_{n+1}}_{s+t}(\bar{A}_*).$$
\end{lem}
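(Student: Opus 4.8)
The plan is to build the spectral sequence from the standard simplicial resolution coming from the monad $E = E^{\circ}$ whose algebras are $E$-algebras. Since $E_{n+1}$-homology of a non-unital $E$-algebra $\bar A_*$ is computed by the (stable) derived functors of the indecomposables functor, or equivalently by a bar-type construction $B(E, E, \bar A_*)$ against the free $E$-algebra functor, the first step is to form the simplicial object $[s] \mapsto E^{\circ(s+1)}(\bar A_*)$ with the usual face maps given by the monad multiplication and the augmentation $\varepsilon\colon E(\bar A_*)\to \bar A_*$, and the degeneracies by the unit of the monad. This is the canonical free simplicial resolution of $\bar A_*$ in the category of $E$-algebras; each term $E^{\circ(s+1)}(\bar A_*)$ is a free $E$-algebra, hence its $E_{n+1}$-homology is concentrated in degree zero and equal to the underlying chain complex $E^{\circ s}(\bar A_*)$ (applying indecomposables to a free algebra returns the generators).

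Next I would take the associated bicomplex: in one direction the simplicial direction $s$, in the other direction the internal chain degree $t$ of the chain complexes $E^{\circ(s+1)}(\bar A_*)$, and pass to the total complex. Filtering by the simplicial degree $s$ yields a first–quadrant spectral sequence whose $E^1$-page is $E^1_{s,t} = H_t\big(E^{\circ(s+1)}(\bar A_*)\big)$ — here one should be slightly careful about the indexing, writing $E^{\circ s}$ versus $E^{\circ(s+1)}$ depending on whether one includes the augmentation term, but after normalization the $E^1_{s,t}$ term is exactly $H_t(E^{\circ s}(\bar A_*))$ as stated. The $d^1$ differential is the alternating sum of the faces induced by the monad structure. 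Convergence is automatic since the filtration is bounded below and exhaustive in each total degree (the resolution lives in non-negative simplicial degrees and $\bar A_*$ is, say, non-negatively graded or at least bounded below in each simplicial level).

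Finally I must identify the abutment with $H^{E_{n+1}}_{*}(\bar A_*)$. The point is that $E_{n+1}$-homology is the homology of the derived indecomposables, which by Fresse's setup (and the general theory of operadic homology via the bar construction) is computed by applying the indecomposables/bar functor levelwise to \emph{any} free simplicial resolution and taking the total complex. Since $[s]\mapsto E^{\circ(s+1)}(\bar A_*)$ is such a resolution and $E_{n+1}$-homology of each free term $E^{\circ(s+1)}(\bar A_*)$ is just $E^{\circ s}(\bar A_*)$ sitting in homological degree zero, the other filtration of the same bicomplex collapses to the complex computing $H^{E_{n+1}}_*(\bar A_*)$, giving the identification of the abutment. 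I expect the main obstacle to be the bookkeeping needed to make this precise: matching Fresse's definition of $E_{n+1}$-homology (via a specific cofibrant resolution or the iterated bar construction) with the monadic bar resolution used here, checking that the relevant functors commute with the geometric realization / totalization up to quasi-isomorphism, and pinning down the exact degree shift so that the $E^1$-term reads $H_t(E^{\circ s}(\bar A_*))$ rather than an off-by-one variant; the homological algebra of the double complex itself is routine.
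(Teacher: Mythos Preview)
Your overall strategy is the paper's: take the monadic bar resolution $E^{\circ(\bullet+1)}(\bar A_*)\to \bar A_*$, pass to a spectral sequence by filtering in the simplicial direction, and invoke $H^{E_{n+1}}_*(E(B_*))\cong H_*(B_*)$ to simplify the $E^1$-page. There is, however, a real inconsistency in your second paragraph. The bicomplex you write down there has $(s,t)$-entry $(E^{\circ(s+1)}(\bar A_*))_t$; the total complex of that object is quasi-isomorphic to $\bar A_*$ itself (this is precisely what ``resolution'' means), so both spectral sequences of that bicomplex converge to $H_*(\bar A_*)$, not to $H^{E_{n+1}}_*(\bar A_*)$. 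Your claim that ``the other filtration of the same bicomplex collapses to the complex computing $H^{E_{n+1}}_*(\bar A_*)$'' is therefore false for that bicomplex, and it contradicts what you correctly say one paragraph later about applying the indecomposables/bar functor levelwise.

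The fix is the one you already gesture at in your third paragraph: apply a chain-level model for $E_{n+1}$-homology (for instance $\Sigma^{-n-1}B^{n+1}(-)$, or the operadic bar construction) \emph{levelwise} to the simplicial resolution, and filter the resulting bicomplex by simplicial degree. The paper phrases this simply as ``the spectral sequence associated to the filtration by simplicial degree has $E^1_{s,t}=H^{E_{n+1}}_t(E^{s+1}(\bar A_*))\Rightarrow H^{E_{n+1}}_*(\bar A_*)$'', and then uses Fresse's identification of $E_{n+1}$-homology of a free $E$-algebra with the homology of its generators to rewrite the $E^1$-term as $H_t(E^{\circ s}(\bar A_*))$. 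Once you build the bicomplex from the $E_{n+1}$-homology complex of the resolution rather than from the raw resolution, your argument and the paper's coincide; also note that $H^{E_{n+1}}_*(E(B_*))\cong H_*(B_*)$ is not ``concentrated in degree zero'' in the internal grading, only in the derived (simplicial) direction.
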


\begin{proof}
As $\bar{A}_*$ is an $E$-algebra, there is a simplicial resolution of
$\bar{A}_*$ with
augmentation $\epsilon\colon E(\bar{A}_*) \ra \bar{A}_*$,
$$ \xymatrix{ {\cdots \, E^{\circ 3}(\bar{A}_*)}  \ar@<2ex>[r] \ar[r]
  \ar@<-2ex>[r] &   \ar@<1ex>[l]  \ar@<-1ex>[l]
  {E^{\circ 2}(\bar{A}_*)} \ar@<1ex>[r]  \ar@<-1ex>[r]
& \ar[l]  {E(\bar{A}_*)} \ar[r]^{\epsilon}& {\bar{A}_*}
}$$
The spectral sequence associated to the filtration by simplicial degree has
$$E^1_{s,t} = H^{E_{n+1}}_t(E^{s+1}(\bar{A}_*)) \Rightarrow H^{E_{n+1}}_*(\bar{A}_*).$$
But $E_{n+1}$-homology of a free $E_{n+1}$-algebra $E(B_*)$ is
isomorphic to $H_*(B_*)$ \cite[13.1.3, 4.4.2]{F1} and therefore the above
$E^1$-term reduces to $H_t(E^{\circ s}(\bar{A}_*))$.
\end{proof}

For topological spaces Cohen identified the homology of $C_{n+1}X$ for
any space $X$. Here $C_{n+1}$
denotes the operad of little $(n+1)$-cubes. He showed \cite[III,
Theorem 3.1]{CLM} that with $\F_p$-coefficients one gets
$$ H_*(C_{n+1}X; \F_p) \cong W_n(H_*(X;\F_p)).$$
Here $W_n$ is a free construction that takes the free restricted Lie
algebra structure, the partial
Dyer-Lashof structure and the commutativity of  $H_*(C_{n+1}X; \F_p)$
into account. \\

A similar description holds for the monad of homology operations in our algebraic setting. In \cite{F2} Fresse describes the
homology of $E(C_*)$ for any chain complex $C_*$. Note that
$$ H_*(E(C_*)) = H_*(\bigoplus_{r \geq 1} E(r) \otimes_{k[\Sigma_r]}
C_*^{\otimes r}) \cong \bigoplus_{r \geq 1} H_*(E(r) \otimes_{k[\Sigma_r]}
C_*^{\otimes r}).$$
We can view the term $H_*(E(r) \otimes_{k[\Sigma_r]}
C_*^{\otimes r})$ as the homology of a bicomplex and  as we assumed
that $E$ is $\Sigma$-cofibrant, the associated
spectral sequence has as vertical homology
$$E(r) \otimes_{k[\Sigma_r]} H_*(C_*^{\otimes r}).$$
As we are working over a field, there is a quasi-isomorphism from
$H_*(C_*)$ to $C_*$. Therefore there are no higher differentials
and we obtain the following isomorphism, which is natural in $C_*$.
\begin{lem} \label{lem:monad}
$$ H_*(E(C_*)) \cong \bigoplus_{r \geq 1} H_*(E(r) \otimes_{k[\Sigma_r]}
(H_*C_*)^{\otimes r}).$$
\end{lem}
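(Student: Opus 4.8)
The plan is to analyze the homology of $E(C_*) = \bigoplus_{r \geq 1} E(r) \otimes_{k[\Sigma_r]} C_*^{\otimes r}$ one summand at a time, since homology commutes with the direct sum. Fix $r \geq 1$ and consider the object $E(r) \otimes_{k[\Sigma_r]} C_*^{\otimes r}$. The strategy is to build a bicomplex whose total complex computes this: take a $\Sigma_r$-free resolution (or rather use that $E(r)$ is already $\Sigma_r$-cofibrant, hence flat as a $k[\Sigma_r]$-module in each degree) in one direction and the internal differential of $C_*^{\otimes r}$ in the other. First I would set up the spectral sequence of this bicomplex that filters by the degree coming from $E(r)$. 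Because $E$ is assumed $\Sigma$-cofibrant, $E(r)$ is a complex of projective (in particular flat) $k[\Sigma_r]$-modules, so tensoring with $E(r) \otimes_{k[\Sigma_r]} (-)$ is exact on $k[\Sigma_r]$-modules in each internal degree; consequently the first differential of the spectral sequence computes the vertical homology as $E(r) \otimes_{k[\Sigma_r]} H_*(C_*^{\otimes r})$, as stated in the excerpt.

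Next I would invoke that we work over a field $k$. Over a field, every chain complex is formal: there is a quasi-isomorphism $H_*(C_*) \to C_*$ (choosing splittings of the cycles and boundaries), and moreover $H_*(C_*^{\otimes r}) \cong (H_*C_*)^{\otimes r}$ by the Künneth theorem with no Tor-correction. The key point is then that the quasi-isomorphism $H_*(C_*) \to C_*$, with $H_*(C_*)$ regarded as a complex with zero differential, induces a quasi-isomorphism on $r$-fold tensor powers and hence, after applying the exact functor $E(r) \otimes_{k[\Sigma_r]} (-)$ degreewise, a quasi-isomorphism $E(r) \otimes_{k[\Sigma_r]} (H_*C_*)^{\otimes r} \to E(r) \otimes_{k[\Sigma_r]} C_*^{\otimes r}$. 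Equivalently, in the spectral sequence above the $E^1$-page, computed with $C_*^{\otimes r}$ replaced by its formal model $(H_*C_*)^{\otimes r}$, already has zero internal differential, so all higher differentials vanish and the spectral sequence collapses at $E^2 = E^1$. Taking homology of $E(r) \otimes_{k[\Sigma_r]} (H_*C_*)^{\otimes r}$ — where the only remaining differential is the one inherited from $E(r)$ — gives precisely $H_*(E(r) \otimes_{k[\Sigma_r]} (H_*C_*)^{\otimes r})$, and summing over $r$ yields the claimed isomorphism.

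The one place that requires genuine care, rather than routine bookkeeping, is the $\Sigma_r$-equivariance of the formality quasi-isomorphism and the resulting comparison. A quasi-isomorphism $f\colon H_*(C_*) \to C_*$ of complexes need not be $\Sigma_r$-equivariant after taking tensor powers in any naive sense — but what one actually needs is only that $f^{\otimes r}\colon (H_*C_*)^{\otimes r} \to C_*^{\otimes r}$ is a quasi-isomorphism of $k[\Sigma_r]$-modules, which it is, because the symmetric group acts by permuting tensor factors and $f^{\otimes r}$ is visibly compatible with these permutations. Then flatness of $E(r)$ over $k[\Sigma_r]$ (from $\Sigma$-cofibrancy) is exactly what is needed to conclude that $\mathrm{id}_{E(r)} \otimes_{k[\Sigma_r]} f^{\otimes r}$ remains a quasi-isomorphism; one should spell out that $\Sigma$-cofibrancy of the operad gives projectivity, not merely flatness, of each $E(r)$ as a $k[\Sigma_r]$-complex, which suffices. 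I expect this equivariance-plus-flatness argument to be the main obstacle; once it is in place the collapse of the spectral sequence and the final identification follow formally.
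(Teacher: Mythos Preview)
Your proposal is correct and follows essentially the same line as the paper: split into summands, view each $E(r)\otimes_{k[\Sigma_r]}C_*^{\otimes r}$ as a bicomplex, use $\Sigma$-cofibrancy to identify the vertical homology as $E(r)\otimes_{k[\Sigma_r]}H_*(C_*^{\otimes r})$, and then invoke the formality quasi-isomorphism $H_*(C_*)\to C_*$ available over a field to conclude that the spectral sequence degenerates. Your treatment of the $\Sigma_r$-equivariance of $f^{\otimes r}$ and the role of projectivity of $E(r)$ is in fact more explicit than the paper's one-line sketch.
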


For any space $X$ we denote by $C_*(X,\F_p)$ the
normalized chain complex of the simplicial $\F_p$-vector space of
simplices in $X$. The following result is
well-known; it is for instance used in  \cite{F2}, but for the
reader's convenience we record it with a proof.
Let  $\bar{W}_n$ denote the nonunital variant of $W_n$ determined by
$$\bar{W}_n(\bar{H}_*(X; \F_p)) = \bar{H}_*(C_{n+1} X ; \F_p).$$
\begin{prop} \label{prop:spacestochains}
For $k=\F_p$ there is an isomorphism of monads on the category of nonnegatively
graded $k$-modules 
\[
\bar{W}_n(-) \cong H_*(E(-)).
\]
\end{prop}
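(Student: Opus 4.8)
The plan is to compare the algebraic monad $C_*\mapsto H_*(E(C_*))$ with the topological little $(n+1)$-cubes monad $X\mapsto C_{n+1}X$ through the singular chains functor, and then to invoke Cohen's computation \cite[III, Theorem 3.1]{CLM}. Everything rests on the fact that $\Sigma_r$ acts \emph{freely} on the little-cubes spaces $C_{n+1}(r)$: this makes each normalized chain complex $C_*(C_{n+1}(r);\F_p)$ a complex of free $\F_p[\Sigma_r]$-modules, so the (arity-wise reduced) chain operad $\bar C_*(C_{n+1};\F_p)$ is $\Sigma$-cofibrant. Since $\bar C_*(C_{n+1};\F_p)$ and $E$ are both $\Sigma$-cofibrant and weakly equivalent to the $E_{n+1}$-operad ($E$ being, by construction, a $\Sigma$-cofibrant model of it), they are linked by a zigzag of quasi-isomorphisms of operads; in particular, for each $r$ one has a zigzag of $\Sigma_r$-equivariant quasi-isomorphisms $E(r)\simeq C_*(C_{n+1}(r);\F_p)$, compatible with the operad structures. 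Fix such a zigzag; note that the normalizations $E(0)=0$ and $C_{n+1}(0)=\varnothing$ match.

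First I would produce the underlying natural isomorphism. For a space $X$, $C_{n+1}X=\coprod_{r\geq1}C_{n+1}(r)\times_{\Sigma_r}X^{r}$, so $\bar H_*(C_{n+1}X;\F_p)\cong\bigoplus_{r\geq1}H_*(C_{n+1}(r)\times_{\Sigma_r}X^{r};\F_p)$. Since $C_*(C_{n+1}(r);\F_p)$ consists of free $\F_p[\Sigma_r]$-modules, the orbit space has chains $C_*(C_{n+1}(r)\times_{\Sigma_r}X^{r})\cong C_*(C_{n+1}(r))\otimes_{\F_p[\Sigma_r]}C_*(X^{r})$, and the Eilenberg--Zilber \emph{shuffle} map --- which, unlike the Alexander--Whitney map, is $\Sigma_r$-equivariant --- is a $\Sigma_r$-equivariant quasi-isomorphism $(C_*X)^{\otimes r}\to C_*(X^{r})$. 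Plugging in the fixed equivariant quasi-isomorphism $E(r)\simeq C_*(C_{n+1}(r);\F_p)$ and using that $E(r)\otimes_{\Sigma_r}(-)$ computes a derived tensor product (as $E$ is $\Sigma$-cofibrant), we obtain
\[
E(r)\otimes_{\Sigma_r}(C_*X)^{\otimes r}\ \simeq\ C_*(C_{n+1}(r))\otimes_{\Sigma_r}(C_*X)^{\otimes r}\ \simeq\ C_*(C_{n+1}(r)\times_{\Sigma_r}X^{r}),
\]
so $H_*(E(C_*X))\cong\bar H_*(C_{n+1}X;\F_p)$, which by \cite[III, Theorem 3.1]{CLM} is $\tilde W_n(\bar H_*(X;\F_p))$. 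By Lemma \ref{lem:monad} the functor $C_*\mapsto H_*(E(C_*))$ depends only on the graded $\F_p$-vector space $H_*C_*$, and every nonnegatively graded $\F_p$-vector space occurs as $\bar H_*(X;\F_p)$ for a suitable space $X$ (a wedge of spheres, say); this upgrades the above to the asserted natural isomorphism $\tilde W_n(H_*(-))\cong H_*(E(-))$ of endofunctors.

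It remains to check compatibility with the monad multiplications and units. As $E$ is $\Sigma$-cofibrant, $E(-)$ preserves quasi-isomorphisms of nonnegatively graded chain complexes over $\F_p$, so $H_*(E(E(C_*)))\cong H_*(E(H_*E(C_*)))$; iterating the identification above rewrites this as $\tilde W_n\tilde W_n(H_*C_*)$, with the monad product of $H_*(E(-))$ given by $H_*$ of the operadic composition $E\circ E\to E$. Dually, $\tilde W_n$ is a monad precisely because it is the mod-$p$ homology of the free $C_{n+1}$-space monad. Transporting the composition of $E$ along the fixed zigzag of operad quasi-isomorphisms turns it into the composition of $\bar C_*(C_{n+1};\F_p)$, which in turn, via the identifications of the previous paragraph, induces the operad composition of $C_{n+1}$ on mod-$p$ homology; chasing this through shows that the natural isomorphism intertwines the two monad products, and similarly the units.

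I expect this last step to be the main obstacle: the underlying natural isomorphism is little more than Lemma \ref{lem:monad} combined with Cohen's theorem, but promoting it to an isomorphism of \emph{monads} forces one to keep the operadic comparison $E\leftrightarrow\bar C_*(C_{n+1};\F_p)$ and all the equivariant Eilenberg--Zilber maps coherent at once --- effectively to argue with weak equivalences of operads in chain complexes rather than with their underlying symmetric sequences. Some additional bookkeeping is needed for the reduced versus unreduced and basepoint conventions, so that the two monads are genuinely compared in their nonunital form.
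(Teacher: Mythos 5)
Your proposal is correct and follows essentially the same route as the paper: realize the homology of the input complex by a (wedge of spheres) space, use the $\Sigma$-freeness of the little cubes together with the equivariant shuffle/Eilenberg--Zilber map and the operad quasi-isomorphism $E\simeq C_*(C_{n+1};\F_p)$ to identify $H_*(E(C_*X))$ with $\bar{H}_*(C_{n+1}X;\F_p)$, obtain naturality from maps of wedges of spheres, and get the monad compatibility by iterating the identification so that the operadic composition of $E$ matches the structure map of $C_{n+1}$. The basepoint/reduced-versus-unreduced bookkeeping you defer is exactly the extra care the paper takes with the quotient $\sim$, but it introduces no new idea.
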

\begin{proof}
For any non-negatively graded $k$-module $M_*$  there exists a
(non-unique) based space $X^M$
such that the reduced homology of $X^M$ is
isomorphic to $M_*$. Let $E_+$ denote the unreduced
version of $E$ with $E_+(0)=\mathbb{F}_p$. Now $H_*(E(M_*))\cong H_*(\bigoplus_{r\geq 1} E(r) \otimes_{\mathbb{F}_p
  \Sigma_r} \tilde{H}_*(X^M; \mathbb{F}_p)^{\otimes r})$ coincides with
$H_*(\bigoplus_{r\geq 0} E_+(r) \otimes_{\mathbb{F}_p \Sigma_r}
\tilde{H}_*(X^M; \mathbb{F}_p)^{\otimes r} / E_+(0) \otimes
\tilde{H}_*(X^M; \mathbb{F}_p)^{\otimes 0})$. Switching to unreduced
homology can be done by introducing the quotient by base point
identification, hence the above is isomorphic to
\[
H_*(\bigoplus_{r\geq 0} E_+(r) \otimes_{\mathbb{F}_p \Sigma_r} H_*(X^M;
\mathbb{F}_p)^{\otimes r} / \sim)
\]
where $\sim$ reduces occurrences of the class $[pt] \in H_0(X^M;
\mathbb{F}_p)$  of the base point $pt \in X^M$ by contracting
the elements in the operad by inserting the basis element of $E_+(0) =
\mathbb{F}_p$  and divides out by $E_+(0) \otimes H_*(X^M ;
\mathbb{F}_p)^{\otimes 0} \cong \F_p$.
As we are working over a  field we can again replace the homology of
$X^M$ by its chain complex by picking
representatives for cycles. Since $\Sigma_r$ acts freely on
$C_{n+1}(r)$, the normalized singular chains $C_*(C_{n+1}(r),\F_p)$ are free as an
$\mathbb{F}_p \Sigma_r$-module. As $E_+$ is quasi-isomorphic to
$C_*(C_{n+1},\F_p)$  as an operad, we can identify the term
above with
$$
H_*(\bigoplus_{r \geq 0} C_*(C_{n+1}(r),\F_p) \otimes_{\mathbb{F}_p
  \Sigma_r} C_*(X^M,\F_p)^{\otimes r} / \sim)
$$
via the K\"unneth spectral sequence.
The fact that the shuffle transformation is lax symmetric monoidal
yields that the latter is isomorphic to
$$
H_*(C_*(\bigsqcup_{r \geq 0}  C_{n+1}(r)\times _{\Sigma_r} (X^M)^r /
\sim,\F_p) / C_*(C(0) \times (X^M)^0,\F_p))
$$
where $\sim$ now denotes the usual reduction of base points, so we see
that the above coincides with $\bar{H}_*(C_{n+1} X^M;\F_p)$.

Choosing a wedge of spheres for
$X^M$ and $X^N$ and observing that every morphism $M_* \rightarrow
N_*$ can be modelled via a map $X^M \rightarrow X^N$ one sees that
this isomorphism is natural in $M_*$.

To show that this is indeed an isomorphism of monads we first note
that the monad multiplication of $H_*(E(-))$ is induced by
the composition in $E$, whereas the monad multiplication of
$\bar{W}_n$ stems from the fact that $W_n$ is left adjoint to the
forgetful functor from what Cohen calls the category of allowable
$AR_n \Lambda_n$-Hopf algebras to $\mathbb{F}_p$-modules that are
unstable modules over the Steenrod algebra.

Iterating the isomorphism above yields
$$H_*(E(E(M_*))) \cong H_*(E(\tilde{H}_*(C_{n+1}X^M/\sim;\F_p))) \cong
\tilde{H}_*(C_{n+1}(C_{n+1}X^M/\sim)/\sim;\F_p).$$
Under this identification the multiplication of $H_*(E(-))$
corresponds to the map induced by the composition of
 $C_{n+1}$ and hence to the monad multiplication of $\bar{W}_{n+1}$.
\end{proof}

\begin{rem}
In the following we also need to cover the case where we consider
chain complexes that are concentrated in non-positive degrees, thus we
have to modify the
proof of Proposition \ref{prop:spacestochains}. To a graded $k$-module $M_{-*}$ concentrated in non-positive degrees we associate $M^*$ with $M^n = M_{-n}$, which is concentrated in
non-negative degrees. We choose a space
$X^M$ as above with $H_*(X^M; \F_p) \cong M^*$. As above we get an
isomorphism of monads between 
\begin{equation} \label{eq:cohomologymonad}
M_{-*} \mapsto H_*(E(M^*)) \text{ and }
M_{-*} \mapsto \bar{W}_n(M^*).
\end{equation}
\end{rem}

\section{The case $n=1,p=2$} \label{sec:algebra-f_2}
Cohen showed in \cite[Theorem 3.1]{CLM} that over a prime field the
homology of a free $C_{n+1}$-algebra in spaces, $C_{n+1}X$, can be
described as a free
gadget on the homology of the underlying space. He also gave
a description that allows to deduce the answer in characteristic
zero.

For odd primes or for higher iterated loop spaces, the answer is
pretty involved, but for $p=2$ and $n=1$ one
is left with a $1$-restricted Gerstenhaber structure.

Haynes Miller worked out the case $p=2,n=1$ in \cite{Mi2}: For any
space, the homology of $C_2X$ with coefficients in $\mathbb{F}_2$ is
given by
$$S(1rL)(\tilde{H}_*(X;\mathbb{F}_2))$$
where $1rL(-)$ denotes the free $1$-restricted Lie algebra and $S(-)$ is the free graded commutative algebra with the induced unique restricted Lie structure. We will recall the definitions and fix some notation.

\begin{defn} \label{def:1rl}
A \emph{$1$-restricted Lie algebra over $\mathbb{F}_2$} is a
non-negatively graded (or non-positively graded) $\mathbb{F}_2$-vector
space, $\mathfrak{g}_*$,
together with two operations, a Lie bracket of degree one, $[-,-]$ and
a restriction, $\xi$:
$$\begin{array}{rlc}
[-,-]\colon & \mathfrak{g}_i \times \mathfrak{g}_j \ra
\mathfrak{g}_{i+j+1}, & i,j  \geq 0, \\
\xi \colon & \mathfrak{g}_i \ra \mathfrak{g}_{2i+1} & i \geq 0.
\end{array}$$
These satisfy the relations
\begin{enumerate}
\item
The bracket is bilinear, symmetric and satisfies the Jacobi relation
$$[a,[b,c]] + [b,[c,a]] + [c,[a,b]] = 0 \text{ for all
}
a,b,c \in \mathfrak{g}_*.$$
\item
The restriction interacts with the bracket as follows:
$[\xi(a),b] = [a,[a,b]]$ and $\xi(a+b) = \xi(a) + \xi(b) + [a,b]$ for
all
homogeneous $a,b \in \mathfrak{g}_*$.
\end{enumerate}
A \emph{morphism of $1$-restricted Lie-algebras} is a map of graded
vector spaces of degree zero preserving the bracket and the
restriction. We denote the category of  $1$-restricted Lie-algebras by
$\rlie1$.
\end{defn}
\begin{rem}\label{rem:1rl}
Note that these relations imply that $[a,a] = 0$ and $\xi(0)=0$.
\end{rem}
\begin{defn} \label{def:1rg}
A \emph{$1$-restricted Gerstenhaber algebra over $\mathbb{F}_2$} is a
$1$-restricted Lie algebra $G_*$ together with an augmentation
$\varepsilon\colon G_* \ra \F_2$ and a graded commutative
$\mathbb{F}_2$-algebra structure on $G_*$ such that the
multiplication in $G_*$ interacts with the restricted Lie-structure as
follows:
\begin{itemize}
\item[]
\item
(Poisson relation)
$$ [a,bc] =[a,b]c +  b[a,c]  \text{ for all
} a,b,c \in G_*.$$
\item
(multiplicativity of the restriction)
$$ \xi(ab) = a^2\xi(b) + \xi(a)b^2 + ab[a,b] \text{ for all
homogeneous } a,b \in G_*.$$
\end{itemize}
The augmentation is required to be multiplicative and to satisfy
$\varepsilon[a,b]=0$ for all $a,b \in G_*$ and $\varepsilon\xi(a)=0$.

A \emph{morphism of $1$-restricted Gerstenhaber algebras} is a map of
graded vector spaces of degree zero preserving the product, the
augmentation, the bracket and the restriction. We denote the category
of $1$-restricted Gerstenhaber algebras by $\rg1$.
\end{defn}
In particular, the bracket and the restriction annihilate squares:
$[a,b^2] = 2b[a,b] = 0$ and $\xi(a^2) = 2a^2\xi(a) + a^2[a,a] =
0$. Thus if $1$ denotes the unit of the algebra structure in $G_*$,
then $[a,1] = 0$ for all $a$ and $\xi(1)=0$.

Usually an augmentation
is not part of the definition, but since we consider augmented
$E$-algebras all $1$-restricted Gerstenhaber
algebras that we will encounter are naturally augmented. The
requirements on $\varepsilon$ are equivalent to $\varepsilon$ being a
morphism of $1$-restricted Gerstenhaber algebras $\varepsilon\colon
G_* \ra \F_2$ where $\F_2$ is viewed as a commutative algebra with
trivial $1$-restricted Lie structure.

We denote by
$IG_*$ the augmentation ideal of $G_*$. This ideal carries a structure
of a non-unital $1$-restricted Gerstenhaber algebra and we will
call both $G_*$ and $IG_*$ $1$-restricted Gerstenhaber algebras.

For a $1$-restricted Lie-algebra $\mathfrak{g}$, the free graded
commutative algebra generated by $\mathfrak{g}$, $S(\mathfrak{g})$,
carries a unique $1$-restricted Gerstenhaber  structure that is
induced by the $1$-restricted Lie algebra structure on $\mathfrak{g}$
and the relations in Definition \ref{def:1rg}.

\begin{rem}
The functor $S\colon \rlie1 \ra \rg1$ is left adjoint to the
augmentation ideal functor $I\colon \rg1 \ra \rlie1$ and the forgetful
functor $U\colon \rlie1 \ra \grf2$ from the category of $1$-restricted
Lie-algebras to the category of graded $\F_2$-vector spaces has the
free $1$-restricted Lie algebra functor $1rL\colon \grf2 \ra \rlie1$ as
a left adjoint.
\end{rem}

For $p=2,n=1$ the structure on $H_*(C_2X;\F_2)$ looks so nice because
$$R_1(q) = \mathbb{F}_2\langle Q^I| I=(s_1,\ldots,s_k) \text{
  admissible, } e(I) \geq q, s_k \leq q\rangle$$
reduces to
$$R_1(q) = \mathbb{F}_2\langle  Q^{(2^{k-1}q,\ldots,2q,q)}|k \geq
1 \rangle.$$
Therefore, in Cohen's identification of $H_*(C_2X;\mathbb{F}_2)$ the
contribution of the Dyer-Lashof terms is absorbed into the free
commutative algebra part: a term like
$$ Q^{(2^{k-1}q,\ldots,2q,q)} \otimes x_q$$
with $x_q$ in $1rL(H_*X;\mathbb{F}_2)$ of degree $q$ is identified (in
what Cohen
calls $V_1$) with $x_q^{2^k}$. Thus $H_*(C_2X;\mathbb{F}_2) \cong
S(1rL)(\tilde{H}_*(X;\F_2))$.

As a corollary to Lemma \ref{lem:monad} and Proposition \ref{prop:spacestochains} we get
\begin{cor}
For any non-negatively graded (or non-positively graded) chain complex
over $\F_2$, $C_*$, we have
$$ H_*(E_2C_*;\F_2) \cong IS(1rL)(H_*C).$$
\end{cor}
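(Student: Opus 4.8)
The plan is to deduce this corollary directly from the monad isomorphism of Proposition \ref{prop:spacestochains}, specialized to $n=1$ and $p=2$. First I would recall that Proposition \ref{prop:spacestochains} gives an isomorphism of monads $\tilde{W}_n(H_*(-)) \cong H_*(E(-))$ on non-negatively graded chain complexes (and, via the subsequent remark, the analogous statement (\ref{eq:cohomologymonad}) for non-positively graded complexes). For $n=1$ and $p=2$, Miller's computation \cite{Mi2} recalled above identifies $\bar{W}_1$ with the composite $IS(1rL)$: for a based space $X$ one has $\tilde{H}_*(C_2 X;\F_2) \cong IS(1rL)(\tilde{H}_*(X;\F_2))$, where the Dyer--Lashof contribution $Q^{(2^{k-1}q,\dots,2q,q)}\otimes x_q$ is absorbed as $x_q^{2^k}$ into the free graded commutative algebra part. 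So the functor part of the corollary's claim is just the translation of $\bar{W}_1 = IS(1rL)$ through the proposition.

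Next I would spell out the chain of isomorphisms: for a non-negatively graded chain complex $C_*$ over $\F_2$, choosing a based space $X^C$ with $\tilde{H}_*(X^C;\F_2)\cong H_*(C_*)$ as in the proof of Proposition \ref{prop:spacestochains}, we get
\[
H_*(E_2 C_*;\F_2) \cong \bar{W}_1(H_*(C_*)) = IS(1rL)(H_*(C_*)) \cong IS(1rL)(\tilde{H}_*(X^C;\F_2)).
\]
For the non-positively graded case one runs the same argument using the cochain reformulation (\ref{eq:cohomologymonad}): $C_{-*}\mapsto H_*(E(H^*(C^*)))$ is isomorphic to $C_{-*}\mapsto \bar{W}_1(H^*(C^*))$, and since $H^*(C^*)$ is just $H_*(C_*)$ reindexed, one again lands on $IS(1rL)(H_*C)$. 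The naturality statement in Proposition \ref{prop:spacestochains} ensures this is natural in $C_*$, and (although the corollary only asserts the underlying functor statement) the monad part of the proposition shows the isomorphism is compatible with the monad structures, so $IS(1rL)$ is genuinely the monad of homology operations on free $E_2$-algebras over $\F_2$.

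The only real subtlety — and the step I would be most careful about — is bookkeeping with the (co)augmentation, i.e.\ making sure the ``$I$'' in $IS(1rL)$ is correctly placed. The proposition is phrased for the reduced operad $E$ with $E(0)=0$, so $H_*(E(C_*))$ has no constant term and corresponds to the augmentation ideal $IS(1rL)$ rather than to $S(1rL)$ itself; this matches Miller's $\bar{W}_1$ being the nonunital variant with $\bar{W}_1(\tilde{H}_*(X;\F_2)) = \tilde{H}_*(C_2X;\F_2)$. I would simply note that, because $E(0)=0$, the free $E_2$-algebra $E_2 C_*$ is nonunital and its homology is the augmentation ideal of the free $1$-restricted Gerstenhaber algebra on $1rL(H_*C)$, which is exactly $IS(1rL)(H_*C)$. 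Everything else is a direct quotation of results already established in the excerpt, so the proof is short.
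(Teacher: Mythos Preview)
Your proposal is correct and follows exactly the approach the paper intends: the corollary is stated without proof, immediately after the discussion of Miller's identification $\bar{W}_1 = IS(1rL)$ and Proposition \ref{prop:spacestochains}, as a direct specialization to $n=1$, $p=2$. Your careful bookkeeping of the augmentation ideal $I$ and the non-positively graded case via \eqref{eq:cohomologymonad} just makes explicit what the paper leaves implicit.
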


\begin{defn}
For an augmented $1$-restricted Gerstenhaber algebra over
$\mathbb{F}_2$ we denote by $Q_{1rG}$ the $\mathbb{F}_2$-vector space
of indecomposable elements with respect to the three operations, the
product, the bracket and the restriction, \ie, the quotient of  $IG_*$
by the ideal generated by these operations:
$$ Q_{1rG}(G_*) = IG_*/\langle\xi(a), [a,b], ab, a,b \in
IG_*\rangle.$$
We extend this notion to $IG_*$, so
$Q_{1rG}(IG_*) = IG_*/\langle\xi(a), [a,b], ab, a,b \in
IG_*\rangle$.
\end{defn}

Similarly, we denote by $Q_{a}(-)$ the indecomposables with respect to
the algebra structure and by $Q_{1rL}(-)$ the indecomposables with
respect to the $1$-restricted Lie algebra structure.
\begin{lem}
For any augmented $1$-restricted Gerstenhaber algebra $G_*$ over
$\F_2$ the vector space of indecomposables $Q_{1rG}$ of $G_*$ can be
computed as the composite
$$ Q_{1rG}(G_*) = Q_{1rL}(Q_{a}(G_*)).$$
\end{lem}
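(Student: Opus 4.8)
The plan is to compute the indecomposables in two stages, first killing the algebra structure, then the restricted Lie structure, and to check that no further relations are needed. First I would observe that there is an obvious surjection
\[
Q_{1rL}(Q_a(G_*)) \twoheadrightarrow Q_{1rG}(G_*),
\]
since in $Q_{1rL}(Q_a(G_*))$ we have quotiented by all products (passing to $Q_a$), then by all brackets and restrictions of the remaining classes, so every defining relation of $Q_{1rG}(G_*)$ holds. The content of the lemma is that this map is also injective, equivalently that passing to $Q_a$ first does not destroy any information needed to form the bracket/restriction quotient.

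The key point is that the operations $\xi$ and $[-,-]$ \emph{descend} to $Q_a(IG_*) = IG_*/\langle ab : a,b \in IG_*\rangle$. For the bracket this is immediate from the Poisson relation $[a,bc] = [a,b]c + b[a,c]$: if $b$ or $c$ lies in $IG_*$ then $[a,bc]$ is a sum of terms each containing a factor in $IG_*$, hence lies in the ideal of decomposables, so $[-,-]$ induces a well-defined bracket on $Q_a(IG_*)$. For the restriction one uses the multiplicativity relation $\xi(ab) = a^2\xi(b) + \xi(a)b^2 + ab[a,b]$ together with additivity $\xi(a+b) = \xi(a)+\xi(b)+[a,b]$: each term on the right-hand side of the multiplicativity relation is decomposable (note $[a,b] = 0$ modulo decomposables was just shown, but here $ab[a,b]$ already has an explicit decomposable factor $ab$), so $\xi$ of a decomposable element is decomposable, and hence $\xi$ is well-defined on $Q_a(IG_*)$. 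Thus $Q_a(G_*)$ inherits the structure of a $1$-restricted Lie algebra.

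Once this is established, the lemma follows formally: forming $Q_{1rG}(G_*)$ means dividing $IG_*$ by the ideal generated by all products, brackets and restrictions, which we may do in either order. Dividing out products first yields $Q_a(G_*)$ with its induced $1$-restricted Lie structure; dividing \emph{that} by the remaining brackets and restrictions yields exactly $Q_{1rL}(Q_a(G_*))$, and by the universal property of these quotients the composite equals $Q_{1rG}(G_*)$. I expect the only subtlety — and hence the main thing to verify carefully — is precisely the descent of $\xi$ to $Q_a$, since $\xi$ is not additive but only additive up to a bracket correction term, so one must check that both the bracket correction and the three terms in the multiplicativity formula land in the decomposables; once that bookkeeping is done the statement is a purely formal consequence of iterated quotients.
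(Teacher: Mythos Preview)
Your proposal is correct and follows essentially the same approach as the paper: the core step in both is verifying that the bracket and restriction descend to $Q_a(G_*)$ via the Poisson and multiplicativity relations, after which the identification of the two iterated quotients is immediate. Your treatment is in fact slightly more explicit than the paper's (which simply refers back to the defining relations), in particular in spelling out the role of the additivity relation $\xi(a+b)=\xi(a)+\xi(b)+[a,b]$ when checking that $\xi$ is well defined on cosets.
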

\begin{proof}
As we demand that $\varepsilon$ annihilates Lie brackets and
restrictions, there is a well-defined $1$-restricted
Lie-structure on $IG_*$ and the algebra indecomposables, $Q_{a}(G_*) =
IG_*/J$, inherit a $1$-restricted Lie algebra structure from $G_*$:
For homogeneous $a,b \in \bar{G}_*$ we set
$$ [a+J,b+J] := [a,b] +J \text{ and } \xi(a + J) := \xi(a)+J.$$
The relations from Definition \ref{def:1rg} tell us that this  gives a
well-defined bracket and a well-defined restriction on
$Q_a(G_*)$. Taking the $1$-restricted Lie algebra indecomposables of
$Q_a(G_*)$ kills expressions in $Q_a(G_*)$ that are of the form
$\xi(a)$ with $a \in IG_*$ and $[a,b]$ with $a,b \in G_*$, so we kill
everything in $IG_*$ that is a product, a bracket or  a restricted
element.
\end{proof}

The algebraic indecomposables of a free commutative algebra on a
(graded) vector space hand back the vector space and the
indecomposables with respect to the $1$-restricted Lie algebra
structure of $1rL(V_*)$ have $V_*$ as output, so we get:
\begin{lem}
$$ Q_{1rG}(S(1rL)(V_*)) = V_* \text{ and } Q_{1rG}(IS(1rL)(V_*)) = V_*. $$
\end{lem}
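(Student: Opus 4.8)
The plan is to deduce both equalities from the preceding lemma, which factors $Q_{1rG}$ as the composite $Q_{1rL}\circ Q_a$, by evaluating the two functors in turn on the free object. First I would note that, after forgetting the $1$-restricted Lie structure, $S(1rL)(V_*)$ is by construction the free graded commutative $\F_2$-algebra on the graded vector space underlying the free $1$-restricted Lie algebra $1rL(V_*)$; hence, since algebraic indecomposables of a free graded commutative algebra return the generating vector space, the quotient $Q_a(S(1rL)(V_*))$ is canonically identified, as a graded vector space, with $1rL(V_*)$.

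The second step is to check that the $1$-restricted Lie structure that $Q_a(S(1rL)(V_*))$ inherits---via the formulas $[a+J,b+J]=[a,b]+J$ and $\xi(a+J)=\xi(a)+J$ from the proof of the preceding lemma---matches, under this identification, the free $1$-restricted Lie structure on $1rL(V_*)$. This is forced by the defining property of the functor $S\colon\rlie1\ra\rg1$: the $1$-restricted Gerstenhaber structure on $S(\mathfrak g)$ is the unique one extending the graded commutative multiplication and restricting to the given bracket and restriction of $\mathfrak g$ on the generators. Applying the preceding lemma then gives
$$Q_{1rG}(S(1rL)(V_*)) = Q_{1rL}(Q_a(S(1rL)(V_*))) = Q_{1rL}(1rL(V_*)),$$
and I would close by invoking the fact recorded just above the statement, that the $1$-restricted Lie indecomposables of $1rL(V_*)$ give back $V_*$.

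For the non-unital assertion I would use that $Q_{1rG}$ on an augmentation ideal is defined by the very same formula as on the unital algebra, so $Q_{1rG}(IS(1rL)(V_*)) = Q_{1rG}(S(1rL)(V_*)) = V_*$; alternatively the two-step computation above runs verbatim with $IS(1rL)(V_*)$ in place of $S(1rL)(V_*)$, since $Q_a$ and $Q_{1rL}$ only see augmentation ideals anyway. The one point that needs genuine care---rather than a bare appeal to a universal property---is the second step: one must see that the surjection $IS(1rL)(V_*)\twoheadrightarrow Q_a(S(1rL)(V_*))$ restricts to an isomorphism of $1$-restricted Lie algebras on the copy of $1rL(V_*)$ sitting inside as the generators, and not merely to an isomorphism of underlying graded vector spaces. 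Everything else is bookkeeping with the preceding lemma and the universal properties of $S$ and $1rL$.
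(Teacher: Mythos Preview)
Your proposal is correct and follows exactly the paper's approach: the paper's entire argument is the sentence preceding the lemma, which invokes the factorization $Q_{1rG}=Q_{1rL}\circ Q_a$ together with the facts that $Q_a$ of a free graded commutative algebra returns the generators and $Q_{1rL}$ of $1rL(V_*)$ returns $V_*$. You have simply spelled out the same reasoning in more detail, including the verification that the inherited $1$-restricted Lie structure on $Q_a(S(1rL)(V_*))$ agrees with that of $1rL(V_*)$, which the paper leaves implicit.
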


We want to identify the $E^2$-term of the spectral sequence
$$ E^1_{p,q}=H_q(E_2^{p}(\bar{A}_*))  \Rightarrow
H^{E_2}_{p+q}(\bar{A}_*). $$

Lemma $\ref{lem:monad}$ allows us to identify $H_q(E_2^{p}(\bar{A}_*)) $ with $H_q(E_2^{p}(H_*(\bar{A}_*)))$, while Proposition $\ref{prop:spacestochains}$ yields that this coincides with  $((IS(1rL))^p(H_*(\bar{A}_*)))_q$.
The homology of $\bar{A}_*$ is a non-unital $1$-restricted
Gerstenhaber algebra over $\F_2$, so the free-forgetful adjunction identifies
$$ \ldots \ra (IS(1rL))^{p+1}(H_*(\bar{A}_*)) \ra
(IS(1rL))^p(H_*(\bar{A}_*)) \ra \ldots \ra IS(1rL)(H_*(\bar{A}_*))$$
as a resolution coming from a simplicial resolution of
$H_*(\bar{A}_*)$. The term $(IS(1rL))^{p+1}(H_*(\bar{A}_*))$ is
in resolution degree $p$. Applying the functor $Q_{1rG}$ to this simplicial
resolution gives
$$ \ldots \ra (IS(1rL))^{p}(H_*(\bar{A}_*)) \ra
(IS(1rL))^{p-1}(H_*(\bar{A}_*)) \ra \ldots
\ra H_*(\bar{A}_*).$$
Comparing the faces and degeneracies of this simplicial resolution with the image of the differential $d^1$ under the identification $H_q(E_2^{p}(\bar{A}_*))  \cong
((IS(1rL))^p(H_*(\bar{A}_*)))_q$ shows:
\begin{thm} \label{thm:e2term}
The above $E^1$-term is isomorphic to
$$ E^1_{p,q} \cong (Q_{1rG}((IS(1rL))^{p+1}(H_*(\bar{A}_*)))_q$$
and the $d^1$-differential takes homology with respect to the
resolution degree. Therefore the $E^2$-term calculates derived
functors of indecomposables of the homology of $\bar{A}_*$,
$$ E^2_{p,q} \cong (\mathbb{L}_pQ_{1rG}(H_*(\bar{A}_*)))_q.$$
\end{thm}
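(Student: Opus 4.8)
The plan is to deduce all three assertions from identifications that are already in place, carrying them out at the level of simplicial objects so that the differential and the derived‑functor statement come for free. First I would recall that in the spectral sequence
\[
E^1_{p,q}=H_q(E_2^{\circ p}(\bar{A}_*))\cong\bigl((IS(1rL))^p(H_*(\bar{A}_*))\bigr)_q\ \Rightarrow\ H^{E_2}_{p+q}(\bar{A}_*)
\]
the $d^1$‑differential is the alternating sum of the face maps of the cotriple bar resolution $[p]\mapsto E_2^{\circ p+1}(\bar{A}_*)$, and that the identification of the $E^1$‑term was obtained by iterating the corollary $H_*(E_2(-);\F_2)\cong IS(1rL)(H_*(-))$, using Lemma \ref{lem:monad} to see that $H_*(E_2(-))$ depends only on the homology of its argument. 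Applying the lemma $Q_{1rG}(IS(1rL)(V_*))=V_*$ to $V_*=(IS(1rL))^p(H_*(\bar{A}_*))$ then rewrites this as $E^1_{p,q}\cong\bigl(Q_{1rG}((IS(1rL))^{p+1}(H_*(\bar{A}_*)))\bigr)_q$, which is the first assertion.

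For the differential and the $E^2$‑term I would use Proposition \ref{prop:spacestochains}, which in the case $n=1$, $p=2$ refines the corollary $H_*(E_2(-);\F_2)\cong IS(1rL)(H_*(-))$ to an isomorphism of \emph{monads}. Hence, after applying $H_*$, the cotriple bar resolution $[p]\mapsto E_2^{\circ p+1}(\bar{A}_*)$ goes over, with its entire simplicial structure, to the cotriple resolution $[p]\mapsto(IS(1rL))^{\circ p+1}(H_*(\bar{A}_*))$ of $H_*(\bar{A}_*)$ regarded as an algebra over the monad $IS(1rL)$, \ie\ as a non-unital $1$-restricted Gerstenhaber algebra via the structure induced by the $E_2$-action; this is exactly the free simplicial resolution $\cdots\to(IS(1rL))^{p+1}(H_*(\bar{A}_*))\to(IS(1rL))^p(H_*(\bar{A}_*))\to\cdots$ considered above. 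Applying $Q_{1rG}$ levelwise and using $Q_{1rG}(IS(1rL)(V_*))=V_*$ once more turns it into the simplicial graded $\F_2$-vector space $[p]\mapsto(IS(1rL))^p(H_*(\bar{A}_*))$, and this is precisely $[p]\mapsto E^1_{p,*}$ with its $d^1$; so $d^1$ takes homology in the resolution direction. Since the cotriple resolution is a free simplicial resolution of $H_*(\bar{A}_*)$, applying $Q_{1rG}$ and taking $\pi_p$ computes $\mathbb{L}_pQ_{1rG}$ by definition (in the non-additive sense of Blanc--Stover \cite{BS}), so $E^2_{p,q}\cong(\mathbb{L}_pQ_{1rG}(H_*(\bar{A}_*)))_q$, which is the remaining assertion.

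The step I expect to require the most care is the upgrade of the degreewise identifications to an identification of \emph{simplicial} objects: this is exactly what the monad isomorphism in Proposition \ref{prop:spacestochains} provides, so that monad multiplication, unit and algebra-structure maps all match, once one observes that the non-unital $1$-restricted Gerstenhaber structure on $H_*(\bar{A}_*)$ that enters $\mathbb{L}_*Q_{1rG}$ is the one induced from the $E_2$-action. It is also worth recording that $Q_{1rG}$ is the left adjoint of the functor sending a graded $\F_2$-vector space to itself with trivial product, bracket and restriction, so that its left derived functors are well defined and are computed by free (for instance cotriple) simplicial resolutions, which is what makes the last identification legitimate.
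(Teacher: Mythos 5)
Your proposal is correct and follows essentially the same route as the paper: identify the $E^1$-term as $Q_{1rG}$ applied to the cotriple resolution $(IS(1rL))^{\bullet+1}(H_*(\bar{A}_*))$ via $Q_{1rG}(IS(1rL)(V_*))=V_*$, match the simplicial structures (and hence $d^1$) using the monad identification of $H_*(E_2(-))$ with $IS(1rL)(H_*(-))$ and the free-forgetful adjunction, and conclude that $E^2$ computes $\mathbb{L}_pQ_{1rG}(H_*(\bar{A}_*))$. Your write-up makes explicit the point, handled only implicitly in the paper, that the monad isomorphism is what upgrades the levelwise identifications to an identification of simplicial objects; this is a welcome clarification rather than a deviation.
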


\section{The rational case} \label{sec:algebra-q}
Most things are similar for the rational case  with the difference
that we consider $n$-Gerstenha\-ber algebras for all $n \geq 1$. In this
section the ground field will always be $\Q$.

\begin{defn} \label{def:nlie}
An \emph{$n$-Lie algebra over $\Q$} is a non-negatively graded (or
non-positively graded) $\Q$-vector space, $L_*$,  together with a Lie
bracket of degree $n$:
$$
[-,-]\colon  L_i \times L_j \ra L_{i+j+n}, \, i,j  \geq 0,
$$
such that the bracket is bilinear  and satisfies a graded
Jacobi relation
$$(-1)^{pr}[x,[y,z]] + (-1)^{qp}[y,[z,x]] + (-1)^{rq}[z,[x,y]] = 0,$$
and a graded antisymmetry relation
$$ [x,y] = -(-1)^{pq}[y,x].$$
Here,
$x,y,z$ are homogenous elements in $L$ and
 $p=|x|+n$, $q=|y|+n$ and $r=|z|+n$.
A \emph{morphism of $n$-Lie algebras} is a map of graded vector spaces
of degree zero preserving the bracket. We denote the category of
$n$-Lie algebras by $\nlie$.
\end{defn}
Note that there is an operadic notion of $n$-Lie algebras involving
$n$-ary Lie brackets. That is something different.

\begin{defn}\label{def:ng}
An \emph{$n$-Gerstenhaber algebra over $\Q$} is an $n$-Lie algebra
$G_*$ together with a unital commutative $\Q$-algebra structure on
$G_*$ and an augmentation $\varepsilon\colon G_* \ra \Q$
such that the Poisson relation holds:
$$ [a,bc] = [a,b]c +(-1)^{p|b|} b[a,c] , \text{ for all homogeneous
} a,b,c \in G_* \text{ with } p= |a|+n, $$
and such that $\varepsilon[a,b]=0$.

A \emph{morphism of $n$-Gerstenhaber algebras} is a map of graded
vector spaces of degree zero preserving the product, the augmentation and the
bracket. We denote the category of (augmented) $n$-Gerstenhaber algebras
by $\ng$. As in the characteristic two case, we also consider $IG_*$
as a non-unital $n$-Gerstenhaber algebra.

Let $nG$ denote the free $n$-Gerstenhaber algebra functor from the
category $\grvctq$ of graded rational vector spaces to the category of
augmented $n$-Gerstenhaber algebras. Then this can be factored as $S
\circ nL$ where $nL$ denotes  the free $n$-Lie algebra functor.

Similarly, we can factor the functor of $n$-Gerstenhaber
indecomposables, $Q_{nG}$, as the algebraic indecomposables followed
by the $n$-Lie indecomposables:
$$ Q_{nG} = Q_{nL} \circ Q_a.$$
\end{defn}
\begin{thm} \label{thm:e2rational}
There is a spectral sequence with
$$ E^2_{p,q} \cong (\mathbb{L}_pQ_{nG}(H_*(\bar{A}_*)))_q \Rightarrow
H^{E_{n+1}}_*(\bar{A}_*)$$
for every $E_{n+1}$-algebra $\bar{A}_*$ over the rationals.
\end{thm}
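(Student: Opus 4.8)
The plan is to mirror the characteristic-two argument of Theorem~\ref{thm:e2term} step by step, replacing the monad $IS(1rL)$ by the monad of homology operations on free $E_{n+1}$-algebras over $\Q$. First I would identify that monad. Since we work over $\Q$, the operad $E_{n+1}$ is formal, i.e. quasi-isomorphic to its homology $H_*(E_{n+1})$, which is the $n$-Gerstenhaber operad. Combining this with Lemma~\ref{lem:monad} gives a natural isomorphism $H_*(E(C_*)) \cong \bigoplus_{r\geq 1} H_*(E_{n+1})(r)\otimes_{\Q[\Sigma_r]} (H_*C_*)^{\otimes r}$, and the right-hand side is exactly the free non-unital $n$-Gerstenhaber algebra $InG$ applied to $H_*C_*$, where as noted in Definition~\ref{def:ng} we have $nG = S\circ nL$. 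Hence the monad $C_*\mapsto H_*(E(C_*))$ is isomorphic, as a monad, to $InG = IS(nL)$ on graded $\Q$-vector spaces; naturality and compatibility with monad multiplication follow from the formality isomorphism being an isomorphism of operads up to homotopy, just as in Proposition~\ref{prop:spacestochains}. (One could alternatively cite Cohen's characteristic-zero computation directly, since the Dyer--Lashof contributions vanish rationally.)

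Next I would feed this into the spectral sequence of the first Lemma. For any $E$-algebra $\bar A_*$, the simplicial resolution $E^{\circ\bullet+1}(\bar A_*)\to \bar A_*$ has associated spectral sequence with $E^1_{p,q} = H_q(E^{\circ p}(\bar A_*)) \cong \big((IS(nL))^{p}(H_*\bar A_*)\big)_q$, converging to $H^{E_{n+1}}_{p+q}(\bar A_*)$. Now $H_*\bar A_*$ is a non-unital $n$-Gerstenhaber algebra, so the free--forgetful adjunction between $\grvctq$ and the category of (non-unital) $n$-Gerstenhaber algebras presents the simplicial object $(IS(nL))^{\bullet+1}(H_*\bar A_*)$ as the canonical cotriple resolution of $H_*\bar A_*$ in $\ng$. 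Applying the non-abelian derived functor machinery: the $E^1$-differential is the alternating sum of face maps, so the $E^2$-term is the homology of the normalized complex, which by definition computes the derived functors of the left adjoint. Here the relevant left adjoint along which we derive is $Q_{nG}$, the $n$-Gerstenhaber indecomposables, since $Q_{nG}$ of a free $n$-Gerstenhaber algebra $nG(V_*)$ returns $V_*$ (the algebra indecomposables of $S(nL(V_*))$ give $nL(V_*)$, and the $n$-Lie indecomposables of that give $V_*$, using the factorization $Q_{nG}=Q_{nL}\circ Q_a$). Applying $Q_{nG}$ levelwise to the resolution and taking homology therefore yields $E^2_{p,q}\cong (\mathbb{L}_pQ_{nG}(H_*\bar A_*))_q$.

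The one genuinely new point compared to the $p=2$ case is verifying that $E^1_{p,q}$ really is $(Q_{nG}$ applied to the $p$-th term of the resolution$)$, i.e. that the identification $H_q(E^{\circ p}(\bar A_*))\cong ((IS(nL))^{p}(H_*\bar A_*))_q$ intertwines the simplicial face and degeneracy maps with the cotriple structure maps of $IS(nL)$ — but this is precisely what the monad isomorphism of the first paragraph gives, since monad isomorphisms induce isomorphisms of the associated cotriple resolutions. I expect the main obstacle to be stating the monad formality cleanly for $E_{n+1}$ over $\Q$ at the level of the derived category of operads, rather than just operad-wise in homology; one has to know that the free-algebra monads on chain complexes are identified compatibly with composition, which is where the $\Sigma$-cofibrancy of our model $E$ and the fact that $\Sigma_r$-actions are projective over $\Q$ are used, so that $\otimes_{\Q[\Sigma_r]}$ is exact and no higher Tor-terms appear. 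Convergence of the spectral sequence is inherited from the first Lemma and requires no further argument. Assembling these pieces gives the theorem.
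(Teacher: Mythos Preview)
Your proposal is correct and follows essentially the same approach the paper intends: the paper states Theorem~\ref{thm:e2rational} without a separate proof, prefacing the rational section with ``Most things are similar for the rational case,'' so the argument is meant to be read off verbatim from Theorem~\ref{thm:e2term} with the monad $IS(1rL)$ replaced by $InG = IS(nL)$. Your identification of the homology-operations monad via either formality or Cohen's characteristic-zero computation, followed by the cotriple-resolution argument, is exactly this parallel; the only minor remark is that the paper would more naturally invoke Cohen's result $H_*(C_{n+1}X;\Q)\cong nG(\tilde H_*(X;\Q))$ together with the space-level argument of Proposition~\ref{prop:spacestochains}, rather than formality, but as you note both routes give the monad isomorphism.
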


\begin{ex}
Let $X$ be a connected and well-behaved topological space and let $n$
be greater or equal to  one. In characteristic zero,
$H_*(C_{n+1}X;\Q)$ is isomorphic to the free $n$-Gerstenhaber algebra
generated by the reduced homology of $X$ \cite{CLM}. Thus the above
$E^2$-term for $A_* = C_*(C_{n+1}X;\Q)$ is isomorphic to
$$ E^2_{p,q} \cong (\mathbb{L}_pQ_{nG}(nG(\bar{H}_*(X;\Q)))_q$$
which is concentrated in bidegrees $(0,q)$ and thus the spectral
sequence collapses and gives
$$ H^{E_{n+1}}_q(C_*(C_{n+1}X;\Q)) \cong \bar{H}_q(X;\Q).$$

Note that $H_*(C_{n+1}X;\Q) \cong H_*(\Omega^{n+1}\Sigma^{n+1} X, \Q)$ if $X$ is
path-connected, thus in this case the algebraic delooping induced by
$E_{n+1}$-homology
corresponds to a geometric delooping.

Similar considerations hold for $H_*(C_{2}X;\F_2)$.
\end{ex}

\section{Some applications of the resolution spectral sequence}
\label{sec:appl}

Let  $U$ be the forgetful functor from the category of $n$-Lie
algebras to graded $\Q$-vector spaces.
\begin{lem} \label{lem:1gres}
Let $V$ be an $n$-Lie algebra and let $C$ be $S(V)$ where the
$n$-Gerstenhaber algebra structure on $C$ is induced by the $n$-Lie structure
of $V$. Then a free resolution in the
category of simplicial $n$-Gerstenhaber algebras of $C$ is given by
$Y_\bullet$ with
$$ Y_\ell = S (nL \circ U)^{\ell +1}(V), \, \ell \geq 0.$$
\end{lem}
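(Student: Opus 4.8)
The plan is to realize $Y_\bullet$ as the bar resolution associated to the free-forgetful adjunction
$S\circ nL \colon \grvctq \rightleftarrows \ng$ for the object $C = S(V)$, and then to verify by hand that this adjunction-bar construction literally coincides with the simplicial object described by the formula $Y_\ell = S(nL\circ U)^{\ell+1}(V)$. Recall from Definition \ref{def:ng} that the free $n$-Gerstenhaber algebra functor $nG$ factors as $S\circ nL$, so the monad on $\grvctq$ whose algebras are (a suitable subcategory of) $n$-Gerstenhaber algebras has underlying functor $U_{\ng}\circ S\circ nL$. The standard simplicial resolution of any $n$-Gerstenhaber algebra $C$ by this adjunction has $\ell$-th term $(S\circ nL\circ U_{\ng})^{\ell+1}(C)$, with the usual faces given by iterated counits $\epsilon\colon (S\circ nL\circ U_{\ng})(C)\to C$ and the degeneracies by units. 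This is automatically a free simplicial resolution in the sense required, so the only real content is the identification of this canonical resolution with $Y_\bullet$ when $C = S(V)$.

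First I would set up the adjunctions precisely: $nL$ is left adjoint to the forgetful functor $U\colon \nlie\to\grvctq$ (this is used implicitly throughout Part 1), and $S$ is left adjoint to the augmentation-ideal-with-induced-Lie-structure functor $\nlie\leftarrow\ng$ — the $n$-graded analogue of the adjunction $S\dashv I$ stated for $\rg1$ in the excerpt. Composing, $S\circ nL\dashv (\text{forget to }\grvctq)$. Next, the key observation: because $C = S(V)$ is free on the $n$-Lie algebra $V$, there is a canonical splitting that lets one replace "$U_{\ng}$ applied to $S(V)$" by "$U$ applied to $V$" up to the extra $S(nL(-))$ already present. Concretely, the unit $V\to U_{\ng}(S(V))$ of the $S\dashv I$ adjunction, postcomposed with $nL\circ U_{\ng}$, should allow one to rewrite $(S\circ nL\circ U_{\ng})^{\ell+1}(S(V))$ as $S\,(nL\circ U)^{\ell+1}(V)$ compatibly with faces and degeneracies; I would prove this by an induction on $\ell$, checking at each stage that the simplicial identities are respected because all the maps in sight are built from units and counits of the two adjunctions, which interact via the triangle identities.

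The step I expect to be the main obstacle is exactly this compatibility of the reindexing with the simplicial structure: one must check that under the identification the $i$-th face map of the adjunction-bar resolution of $S(V)$ corresponds to the map of $Y_\bullet$ that contracts the $i$-th pair $nL\circ U$, and that the last face (the augmentation $Y_0 = S(nL\circ U)(V)\to S(V)$) is induced by the counit $nL(U(V))\to V$ of the $nL\dashv U$ adjunction followed by $S$. This is bookkeeping with two interleaved monads, and the subtlety is that $S(V)$ is only \emph{generated} by $V$, not equal to it, so one cannot naively cancel an $S$; the point is that freeness of $C$ on $V$ as an $n$-Lie algebra means the two-sided bar construction on the composite monad, evaluated at $S(V)$, retracts onto the bar construction on the $n$-Lie monad $nL\circ U$ evaluated at $V$, with $S$ applied levelwise. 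Once this identification is in place, freeness of each $Y_\ell$ and the fact that $Y_\bullet\to S(V)$ is a resolution (i.e. has trivial homotopy after forgetting, via the extra degeneracy coming from the unit) follow formally from the general theory of monadic bar resolutions, so no further calculation is needed.
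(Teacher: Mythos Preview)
Your plan has a genuine gap: the simplicial object $Y_\bullet$ with $Y_\ell = S(nL\circ U)^{\ell+1}(V)$ does \emph{not} coincide with, nor retract off, the standard bar resolution $(nG\circ U_{\ng})^{\bullet+1}(S(V))$ of the composite monad. Already in degree zero the two differ: $(nG)(U_{\ng}(S(V)))$ is the free $n$-Gerstenhaber algebra on the entire underlying vector space of $S(V)$ (containing $1$, products $v\cdot w$, etc.), whereas $Y_0 = nG(U(V))$ is free only on $U(V)$. There is no natural vector-space projection $U_{\ng}(S(V))\to U(V)$, so the ``retraction'' you hope for does not exist as a map of simplicial $n$-Gerstenhaber algebras. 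The inclusion $V\hookrightarrow U_{\ng}(S(V))$ does induce a weak equivalence $Y_\bullet\to (nG\circ U_{\ng})^{\bullet+1}(S(V))$, but that direction cannot be used to \emph{deduce} that $Y_\bullet$ is a resolution from the fact that the big one is.

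The paper's argument is much more direct and avoids the composite monad entirely. One simply applies $S$ levelwise to the bar resolution $(nL\circ U)^{\bullet+1}(V)\to V$ coming from the adjunction $(nL,U)$ alone. The extra degeneracy of this bar construction lives in graded vector spaces, and since any functor preserves simplicial homotopies, $S$ carries the contraction to a contraction of $Y_\bullet\to S(V)$; thus $Y_\bullet$ is a resolution. Each $Y_\ell = nG\big(U(nL\circ U)^\ell(V)\big)$ is visibly a free $n$-Gerstenhaber algebra, and the degeneracies are of the form $nG(f)$ for maps $f$ of underlying vector spaces, so $Y_\bullet$ is free in the required simplicial sense. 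No identification with the larger resolution is needed.
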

\begin{proof}
We use the adjunction $(nL,U)$ to obtain the
simplicial structure on $Y_\bullet$. The usual simplicial contraction
for $(nL\circ U)^{\bullet+1}$ shows that $Y_\bullet$ is a resolution
of $S(V)$.
Note that the augmentation
$$ Y_0 = S(nL \circ U)(V)  \ra S(V)$$
is a morphism of $n$-Gerstenhaber algebras.

The degeneracy maps send $nG(U(nL\circ U)^\ell(V))$ to $nG(U(nL\circ
U)^{\ell+1}(V))$ via maps of the form $nG(f)$ with $f\colon U(nL\circ
U)^\ell(V) \ra U(nL\circ U)^{\ell+1}(V)$, thus $Y_\bullet$ is a free
simplicial resolution of $S(V)$.
\end{proof}

\begin{cor} \label{cor:e2smooth}
For $S(V)$ as in Lemma \ref{lem:1gres} there is an isomorphism
$$ (\mathbb{L}_pQ_{nG}(S(V)))_q \cong (\mathbb{L}_pQ_{nL}(V))_q$$
for all $p \geq 0$ and all $q$. In particular, for any
$E_{n+1}$-algebra $A_*$ with $H_*(A_*) \cong S(V)$ with $S(V)$
as in Lemma \ref{lem:1gres}, the
$E^2$-term of Theorem \ref{thm:e2rational}  is
isomorphic to
$$E^2_{p,q} \cong (\mathbb{L}_pQ_{nL}(V))_q.$$
\end{cor}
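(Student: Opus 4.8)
The plan is to compute the non-additive derived functors $\mathbb{L}_\bullet Q_{nG}(S(V))$ directly from the explicit free simplicial resolution $Y_\bullet$ of Lemma~\ref{lem:1gres} and then to exploit the factorization $Q_{nG}=Q_{nL}\circ Q_a$ recorded in Definition~\ref{def:ng}. Since $Y_\bullet$ is a free simplicial resolution of $S(V)$, we have $(\mathbb{L}_pQ_{nG}(S(V)))_q\cong(\pi_pQ_{nG}(Y_\bullet))_q$, so it suffices to analyse $Q_{nG}(Y_\bullet)=Q_{nL}(Q_a(Y_\bullet))$ level by level. The crux is the observation that, for a graded vector space $W$, the algebra indecomposables $Q_a(S(W))$ are canonically the linear summand of $S(W)$ and hence naturally isomorphic to $W$; moreover, if $W$ carries an $n$-Lie structure and $S(W)$ is equipped with the induced $n$-Gerstenhaber structure, then the bracket of two linear elements is again linear and agrees with the original bracket on $W$, so that $Q_a(S(W))\cong W$ as $n$-Lie algebras, naturally in $W$.

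Granting this, I would proceed as follows. By its construction in Lemma~\ref{lem:1gres}, $Y_\bullet$ is $S$ applied to the cotriple resolution $(nL\circ U)^{\bullet+1}(V)$ of $V$ attached to the adjunction $(nL,U)$; in particular every face and degeneracy map of $Y_\bullet$ has the form $S(g)$ for a morphism $g$ of $n$-Lie algebras. Applying the natural isomorphism $Q_a\circ S\cong\mathrm{id}$ levelwise then produces an isomorphism of simplicial $n$-Lie algebras
$$ Q_a(Y_\bullet)\;\cong\;(nL\circ U)^{\bullet+1}(V).$$
The right-hand side is a free simplicial resolution of $V$ in $n$-Lie algebras, as one sees from the usual simplicial contraction — the same extra-degeneracy argument used in the proof of Lemma~\ref{lem:1gres}. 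Hence $(\pi_pQ_{nL}((nL\circ U)^{\bullet+1}(V)))_q\cong(\mathbb{L}_pQ_{nL}(V))_q$, and combining this with $Q_{nG}(Y_\bullet)=Q_{nL}(Q_a(Y_\bullet))$ and the preceding display yields
$$ (\mathbb{L}_pQ_{nG}(S(V)))_q\cong(\pi_pQ_{nL}(Q_a(Y_\bullet)))_q\cong(\mathbb{L}_pQ_{nL}(V))_q.$$
All functors in sight preserve the internal grading, so the degree $q$ is carried along throughout and the isomorphism holds in each bidegree $(p,q)$. The final assertion is now immediate: for an $E_{n+1}$-algebra $A_*$ with $H_*(A_*)\cong S(V)$ as in Lemma~\ref{lem:1gres}, substituting this isomorphism into the $E^2$-term of Theorem~\ref{thm:e2rational} turns $(\mathbb{L}_pQ_{nG}(H_*(\bar A_*)))_q$ into $(\mathbb{L}_pQ_{nL}(V))_q$.

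The step I expect to require the most care is the compatibility of $Q_a\circ S$ with the $n$-Lie structure: one must verify not merely that $Q_a(S(W))\cong W$ as graded vector spaces, but that for the induced $n$-Gerstenhaber structure on $S(W)$ this is an isomorphism of $n$-Lie algebras, and that it is natural enough to carry the full simplicial structure of $Y_\bullet$ over to the cotriple resolution $(nL\circ U)^{\bullet+1}(V)$. One should also confirm that the induced $n$-Gerstenhaber structure on the various $S((nL\circ U)^{\ell+1}(V))$ coincides with the one produced by the free functor $nG=S\circ nL$, so that Lemma~\ref{lem:1gres} applies verbatim. Once this bookkeeping is in place, the remaining steps are purely formal manipulations with cotriple resolutions and with the factorization $Q_{nG}=Q_{nL}\circ Q_a$.
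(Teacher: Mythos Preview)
Your proposal is correct and follows essentially the same argument as the paper: apply $Q_{nG}$ to the resolution $Y_\bullet=S(nL\circ U)^{\bullet+1}(V)$ from Lemma~\ref{lem:1gres}, use $Q_{nG}(S(nL\circ U)^{s+1}(V))\cong U(nL\circ U)^s(V)=Q_{nL}((nL\circ U)^{s+1}(V))$, and then recognize $(nL\circ U)^{\bullet+1}(V)$ as a free simplicial resolution of $V$ in $n$-Lie algebras. Your extra care about the compatibility of $Q_a\circ S$ with the $n$-Lie structure and with the simplicial maps is exactly the bookkeeping the paper leaves implicit.
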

\begin{proof}
Using the resolution $S(nL\circ U)^{\bullet+1}(V)$ of $S(V)$ we get
that
$$ Q_{nG}(S(nL\circ U)^{s+1}(V)) \cong U((nL\circ U)^{s})(V) =
Q_{nL}((nL\circ U)^{s+1}(V)).$$
As $(nL\circ U)^{\bullet+1}(V)$ is a simplicial resolution of $V$ by free $n$-Lie
algebras, the claim follows.
\end{proof}

\begin{rem} \label{rem:suspensions}
There is an equivalence of categories between the category $\nlie$ and
the category of graded Lie algebras, $\lie$, where the latter category
is nothing but the category of $0$-Lie algebras \cite[Proposition
I.6.3]{KM}. The equivalence is given by the $n$-fold suspension,
$\Sigma^n$,  and desuspension, $\Sigma^{-n}$:
$$ \xymatrix@1{ {\nlie \, } \ar@<0.8ex>[rr]^{\Sigma^n} & &
  \ar@<0.8ex>[ll]^{\Sigma^{-n}} { \, \lie.}}$$
An analogous result holds in characteristic two \cite[III \S 15]{CLM}.
\end{rem}

We can use
the resolution exhibited in \ref{cor:e2smooth} to exploit
the equivalences between the different $n$-Lie structures.
\begin{cor} \label{cor:shift}
Suppose that $H_*(A)= S(V)$ as in \ref{lem:1gres}.
Then for
every
$\ell \in \mathbb{Z}$ the $E^2$-term of the spectral sequence
calculating $E_{n+1}$-homology of $A$ can be computed as
$$E^2_{p,q} = (\mathbb{L}_p Q_{\ell L} (\Sigma^{n-\ell}V))_{q+n-\ell}.$$
\end{cor}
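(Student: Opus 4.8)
The plan is to combine Corollary \ref{cor:e2smooth} with the equivalence of categories from Remark \ref{rem:suspensions}. By Corollary \ref{cor:e2smooth}, for $H_*(A) = S(V)$ as in Lemma \ref{lem:1gres} the $E^2$-term is $(\mathbb{L}_pQ_{nL}(V))_q$, so it suffices to show that derived indecomposables of $n$-Lie algebras and of $\ell$-Lie algebras are related by an $(n-\ell)$-fold (de)suspension, both in the internal grading and in the homological direction. First I would invoke the equivalence $\Sigma^{n-\ell}\colon \nlie \to \ell L$ (obtained by composing $\Sigma^n$ and $\Sigma^{-\ell}$ from Remark \ref{rem:suspensions}, or the corresponding statement over $\F_2$), which is an equivalence of categories and in particular sends free $n$-Lie algebras to free $\ell$-Lie algebras and commutes with the respective forgetful functors up to the evident suspension on graded vector spaces. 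Consequently, applying $\Sigma^{n-\ell}$ to the free simplicial resolution $(nL\circ U)^{\bullet+1}(V)$ of $V$ produces a free simplicial resolution $(\ell L\circ U)^{\bullet+1}(\Sigma^{n-\ell}V)$ of $\Sigma^{n-\ell}V$ in $\ell L$.

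Next I would compute the derived indecomposables using these resolutions. Since $Q_{nL}$ corresponds under the equivalence to $\Sigma^{n-\ell}$ post-composed with $Q_{\ell L}$ (the indecomposables functor is defined purely in terms of the Lie operations, which the suspension isomorphism respects), we get $Q_{nL}((nL\circ U)^{s+1}(V)) \cong \Sigma^{\ell - n} Q_{\ell L}((\ell L\circ U)^{s+1}(\Sigma^{n-\ell}V))$ in each simplicial degree $s$, naturally in $s$. Taking homotopy groups of the resulting simplicial graded vector spaces and keeping track of gradings, the internal degree $q$ of $\mathbb{L}_pQ_{nL}(V)$ matches the internal degree $q+n-\ell$ of $\mathbb{L}_pQ_{\ell L}(\Sigma^{n-\ell}V)$, while the homological degree $p$ is unchanged because the suspension is applied degreewise and does not touch the simplicial direction. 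This yields
$$(\mathbb{L}_pQ_{nL}(V))_q \cong (\mathbb{L}_pQ_{\ell L}(\Sigma^{n-\ell}V))_{q+n-\ell},$$
which combined with Corollary \ref{cor:e2smooth} gives the claimed formula for the $E^2$-term.

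The step I expect to require the most care is the bookkeeping of gradings under the suspension functor: one must be consistent about whether $\Sigma$ raises or lowers degree and verify that the composite $\Sigma^{n-\ell} = \Sigma^n \circ \Sigma^{-\ell}$ produces exactly the shift $q \mapsto q+n-\ell$ with no off-by-one errors coming from the degree of the bracket (which is $n$ for an $n$-Lie algebra, $\ell$ for an $\ell$-Lie algebra). The categorical equivalence itself is quoted from \cite{KM} and \cite{CLM}, so the only real content is checking that $Q_{\ell L}$, free functors, forgetful functors, and the formation of $\mathbb{L}_p$ all intertwine correctly with the suspension; these are formal once the grading conventions are pinned down. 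The case $\ell = 0$ recovers ordinary graded Lie algebras and is the one most useful in the applications to follow.
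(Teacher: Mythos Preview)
Your proposal is correct and follows essentially the same approach as the paper: both invoke Corollary~\ref{cor:e2smooth} and then transport the canonical free resolution along the suspension equivalence of Remark~\ref{rem:suspensions}. The paper packages the suspension equivalence as an isomorphism of monads $nL \cong \Sigma^{\ell-n}\,\ell L\,\Sigma^{n-\ell}$ and manipulates the iterated monad $(nL\circ U)^{\bullet}$ directly, whereas you phrase the same step in terms of an equivalence of categories carrying free resolutions and indecomposables functors to one another; the degree bookkeeping and the outcome are identical.
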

\begin{proof}
Using the natural isomorphism $nL\cong \Sigma^{\ell-n} \ell L
\Sigma^{n-\ell}$ and the fact that this is an isomorphism of monads we
find
\begin{eqnarray*}
E^2_{p,q} &=& (\pi_p U(nL U)^{\bullet}(V))_q \cong (\pi_p
U(\Sigma^{\ell-n} \ell L\Sigma^{n-\ell}U)^{\bullet}(V))_q\\
&\cong & (\pi_p U(\Sigma^{\ell-n} \ell LU\Sigma^{n-\ell})^{\bullet}
(V))_q \cong (\pi_p  \Sigma^{\ell-n} U(\ell LU)^{\bullet}
(\Sigma^{n-\ell} V))_q\\
&\cong & (\pi_p  U(\ell LU)^{\bullet} (\Sigma^{n-\ell} V))_{q+n-\ell},
\end{eqnarray*}
which proves the claim.
\end{proof}

\begin{rem}\label{rem:e2smoothchartwo}
Similar results hold in characteristic two, \ie, if $V$ is a
$1$-restricted Lie algebra over $\F_2$ and if $C=S(V)$ carries the
induced $1$-restricted Gerstenhaber algebra structure, then $S(1rL
\circ U)^{\bullet +1}$ is a resolution of $C$ by free $1$-restricted
Gerstenhaber algebras and the $E^2$-term of the resolution spectral
sequence simplifies to
$$ E^2_{p,q} \cong (\mathbb{L}_pQ_{1rL}(V))_q$$
and the suspension isomorphism yields that this in turn can be
expressed as derived functors of indecomposables of restricted Lie
algebras.
\end{rem}

In the following we want to use the Tor interpretation of
Lie-homology in our setting:
\begin{rem}\label{rem:lieviator}
Let $\mathfrak{g}$ be a graded Lie-algebra, restricted if $k=\F_2$ and unrestricted over the rationals.
The usual Tor interpretation (see for instance \cite{Q2}) of
$\mathbb{L}_s Q_{L} (\mathfrak{g})$ holds in the graded case. Indeed one easily identifies the indecomposables of the
standard cofibrant replacement $X=L^{\bullet +1}(\mathfrak{g})$ (or  $X=rL^{\bullet +1}(\mathfrak{g})$ in the restricted case) with
$\overline{\mathfrak{U}(X)} \otimes_{\mathfrak{U}(X)} k$. Here
$\mathfrak{U}(\mathfrak{g})$ denotes the (restricted) universal
enveloping algebra of the bigraded Lie algebra $\mathfrak{g}$ while
$\overline{\mathfrak{U}(\mathfrak{g})}$ denotes its augmentation
ideal. The K\"unneth
spectral sequence constructed by Quillen \cite[II.6]{Q} can be
generalized to the graded setting, and since  $X$ consists of free
graded (restricted) Lie algebras, $\overline{\mathfrak{U}(X)}$ is a cofibrant
$\mathfrak{U}(X)$-module. Hence we get a spectral sequence of
internally bigraded vector spaces
$$
E^2_{p,q}=\mathrm{Tor}_p^{\pi_*(\mathfrak{U}(X))}
(\pi_*(\overline{\mathfrak{U}(X)}),k)_q \Rightarrow
\pi_{p+q}(\overline{\mathfrak{U}(X)}
\otimes_{\mathfrak{U}(X)} k),$$
where $q$ is the degree originating from taking homotopy groups.
Filtering $\mathfrak{U}(X)$ and  $\mathfrak{U}(\mathfrak{g})$ by the standard
filtration for enveloping
algebras and considering the associated spectral sequences, a bigraded
version of the Poincar\'e-Birkhoff-Witt theorem shows that the augmentation
$\mathfrak{U}(X)\rightarrow \mathfrak{U}(\mathfrak{g})$ induces an
isomorphism on $E^1$ (see \cite{Pr} for the case of characteristic
two). Hence the above $E^2$-term equals
$\mathrm{Tor}_p^{\mathfrak{U}(\mathfrak{g})}(\overline{\mathfrak{U}(\mathfrak{g})},k)$
concentrated in degree $q=0$. Finally the short exact sequence
$$\xymatrix{ 0 \ar[r] &\overline{\mathfrak{U}(\mathfrak{g})} \ar[r] &
  \mathfrak{U}(\mathfrak{g}) \ar[r] & k \ar[r] & 0}$$
yields
$$\mathbb{L}_s Q_{L} (\mathfrak{g})\cong
\mathrm{Tor}_{s+1}^{\mathfrak{U}(\mathfrak{g})}(k,k).$$
\end{rem}

An example of how this simplifies our  spectral sequence is given
by the chains on an iterated loop space on
a highly connected space. For an $(n+1)$-connected space $X$  the
space $\Omega^{n+1}X$ is
path-connected. A classical result expresses $H_*(\Omega^{n+1}X;\Q)$
as a free graded commutative algebra: The connectivity assumptions
ensure that due to the Milnor-Moore result \cite[p.263]{MM} the Hurewicz map
$$ \pi_*(\Omega^{n+1}X) \otimes \Q \ra H_*(\Omega^{n+1}X;\Q)$$
induces an isomorphism of Hopf algebras between the enveloping algebra
of the Lie-algebra $\pi_*(\Omega^{n+1}X) \otimes \Q$ and
$H_*(\Omega^{n+1}X;\Q)$. Here, the Lie-structure on the source is
given by the Samelson product. For $n \geq 1$, this Lie-structure is
trivial and thus the enveloping algebra is isomorphic to the free
graded commutative algebra generated by $\pi_*(\Omega^{n+1}X) \otimes
\Q$:
$$ S(\pi_*(\Omega^{n+1}X) \otimes \Q) \cong H_*(\Omega^{n+1}X;\Q).$$
Cohen showed \cite[p.~215]{CLM} that the Whitehead product on
$ \Sigma^{-n-1}\pi_*(X) \otimes \Q$ corresponds to the Browder-bracket
$\lambda_n$ on $H_*(\Omega^{n+1}X;\Q)$.
Gaudens and Menichi observed  \cite[Theorem 4.1]{GM} that this leads to
an isomorphism of $nG$-algebras
\begin{equation} \label{eq:gniso}
S(\Sigma^{-n}\pi_*(\Omega X) \otimes
\Q) \cong H_*(\Omega^{n+1}X;\Q)
\end{equation}
where the $n$-Lie structure on the left-hand side is induced by the
Samelson bracket on $\pi_*(\Omega X)$.
\begin{prop}
For every $(n+1)$-connected space $X$
$$ \mathbb{L}_s(Q_{nG})(H_*(\Omega^{n+1}X;\Q))_q \cong
\mathrm{Tor}_{s+1, q+n}^{H_*(\Omega X;\Q)}(\Q,\Q). $$
\end{prop}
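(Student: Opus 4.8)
The plan is to recognise $H_*(\Omega^{n+1}X;\Q)$ as a free graded commutative algebra on an $n$-Lie algebra --- so that the simplifications of this section apply --- and then to pass through the suspension equivalence of Remark~\ref{rem:suspensions} to ordinary graded Lie algebras, invoking the Tor interpretation of Remark~\ref{rem:lieviator} and the Milnor--Moore theorem.

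By the Gaudens--Menichi identification (\ref{eq:gniso}) there is an isomorphism of $n$-Gerstenhaber algebras $H_*(\Omega^{n+1}X;\Q)\cong S(V)$ with $V=\Sigma^{-n}\pi_*(\Omega X)\otimes\Q$, where the $n$-Lie bracket on $V$ is the $n$-fold desuspension of the Samelson bracket on $\pi_*(\Omega X)\otimes\Q$. Hence $H_*(\Omega^{n+1}X;\Q)$ is of the form $S(V)$ appearing in Lemma~\ref{lem:1gres}, so Corollary~\ref{cor:e2smooth} yields $\mathbb{L}_sQ_{nG}(H_*(\Omega^{n+1}X;\Q))\cong \mathbb{L}_sQ_{nL}(V)$, and Corollary~\ref{cor:shift} with $\ell=0$ (equivalently, the natural isomorphism of monads $nL\cong\Sigma^{-n}L\Sigma^{n}$) identifies this, after the degree shift produced by the suspension, with
$$(\mathbb{L}_sQ_{nG}(H_*(\Omega^{n+1}X;\Q)))_q \;\cong\; (\mathbb{L}_sQ_{L}(\pi_*(\Omega X)\otimes\Q))_{q+n},$$
the graded Lie structure on $\pi_*(\Omega X)\otimes\Q$ being the Samelson bracket.

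Next, Remark~\ref{rem:lieviator}, applied to the bigraded Lie algebra $\mathfrak{g}=\pi_*(\Omega X)\otimes\Q$, rewrites the right-hand side as $\mathrm{Tor}^{\mathfrak{U}(\mathfrak{g})}_{s+1,\,q+n}(\Q,\Q)$. Finally, since $X$ is $(n+1)$-connected with $n\geq1$, the loop space $\Omega X$ is simply connected, so the Milnor--Moore theorem \cite{MM} provides an isomorphism of graded Hopf algebras $\mathfrak{U}(\pi_*(\Omega X)\otimes\Q)\cong H_*(\Omega X;\Q)$, with $\pi_*(\Omega X)\otimes\Q$ carrying the Samelson bracket. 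Substituting this identification gives the assertion.

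The only delicate point is the grading bookkeeping: one must check that the suspension of Remark~\ref{rem:suspensions} shifts only the internal degree and not the resolution index $s$, that Remark~\ref{rem:lieviator} preserves the internal grading, and that the bracket transported along $\Sigma^{n}$ is literally the Samelson bracket rather than a sign-twisted variant. All of this is routine, since $\Sigma^{n}$ is an equivalence of categories and (\ref{eq:gniso}) already expresses the $n$-Lie structure in terms of the Samelson bracket.
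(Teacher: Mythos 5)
Your proposal is correct and follows essentially the same route as the paper's own argument: identify $H_*(\Omega^{n+1}X;\Q)$ as $S(\Sigma^{-n}\pi_*(\Omega X)\otimes\Q)$ via (\ref{eq:gniso}), apply Corollary \ref{cor:e2smooth} and the shift of Corollary \ref{cor:shift}, and conclude with the Tor interpretation of Remark \ref{rem:lieviator} and Milnor--Moore. The paper's proof is just a terser version of exactly these steps, so nothing further is needed.
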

\begin{proof}
Corollary \ref{cor:e2smooth} implies that the $E^2$-term of the resolution
spectral sequence in this case is isomorphic to
$$ (\mathbb{L}_s Q_{nL})_t(\Sigma^{-n}(\pi_*(\Omega X) \otimes \Q))).$$
Corollary \ref{cor:shift}  together with the
$\mathrm{Tor}$-description of Lie-homology and the
Milnor-Moore Theorem show the claim.
\end{proof}
\begin{rem}
Up to a shift in degrees the above $E^2$-term is isomorphic to the
$E^2$-term of the Rothenberg-Steenrod spectral sequence \cite{RS}. The latter
converges to the homology of the space $X$. We conjecture that there
is an isomorphism of spectral sequences between our resolution
spectral sequence and the (shifted) Rothenberg-Steenrod spectral sequence.
\end{rem}
\begin{rem}
Anderson constructed a spectral sequence \cite{And} whose $E^2$-page
is
$$ E^2_{p,q} \cong \HH^{[n+1]}_p(H_*(\Omega^{n+1}X;\Q))_q$$
and which converges to $H_{p+q}(X;\Q)$. Here $\HH^{[n+1]}_*$ denotes
Hochschild homology of order $n+1$ in the sense of \cite{P}.

However, in his setting $H_*(\Omega^{n+1}X;\Q)$ is considered as a
graded commutative algebra, whereas the $n$-Lie structure is ignored.
In this situation Hochschild homology of order $n+1$ is isomorphic to
$E_{n+1}$-homology,
$$\HH^{[n+1]}_p(H_*(\Omega^{n+1}X;\Q))_q \cong
H^{E_{n+1}}_{p-n-1}(\bar{H}_*(\Omega^{n+1}X;\Q))_q.$$
Thus his spectral sequence starts off with
$E_{n+1}$-homology of the underlying graded commutative algebra of
$H_*(\Omega^{n+1}X;\Q)$ and converges to $H_*(X;\Q)$.

\end{rem}
\section{The Blanc-Stover composite functor spectral sequence}
\label{sec:bs}

We know that working relative to $\F_2$ we can factor $Q_{1rG}$ as
$Q_{1rL} \circ Q_{a}$ and similarly in the rational setting we have
$Q_{nG} = Q_{nL} \circ Q_a$. Therefore we want to use  the
composite functor spectral sequence of Blanc and Stover in order to
approximate the $E^2$-term of our resolution spectral sequence (as in
Theorem \ref{thm:e2term} and Theorem \ref{thm:e2rational}). Let
$\mathcal{C}$ and $\mathcal{B}$ denote categories of universal graded
algebras such as the category of graded commutative algebras (over
$\Q$ or $\F_2$), the category of (restricted) $n$-Lie algebras or of
(restricted) $n$-Gerstenhaber algebras. Let $\mathcal{A}$ denote a
concrete category such as the category of graded vector spaces over a
field. Moreover let $T\colon \mathcal{C} \ra \mathcal{B}$ and $Z\colon
\mathcal{B} \ra \mathcal{A}$ be functors, then Blanc and Stover prove
the existence of the following spectral sequence.

\begin{thm}\cite[Theorem 4.4]{BS} \label{thm:bsss}
Suppose that $TF$ is $Z$-acyclic for every free $F$ in
$\mathcal{C}$. Then for every $C$ in $\mathcal{C}$ there is a
Grothendieck spectral sequence with
$$ E^2_{s,t} = (\mathbb{L}_s\bar{Z}_t)(\mathbb{L}_*T)C \Rightarrow
(\mathbb{L}_{s+t}(Z \circ T))C.$$
\end{thm}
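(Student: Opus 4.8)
\emph{Proof proposal.} The plan is to transport the classical proof of the Grothendieck spectral sequence into the simplicial, non-additive world of universal algebras: free simplicial resolutions replace injective resolutions, a bisimplicial ``double resolution'' replaces a Cartan--Eilenberg resolution, and the asserted spectral sequence is read off from the two spectral sequences of the associated diagonal. First I would fix a free simplicial resolution $P_\bullet \xrightarrow{\sim} C$ in $\mathcal{C}$, for instance the canonical one built from the free--forgetful comonad of $\mathcal{C}$. Applying $T$ degreewise produces a simplicial object $TP_\bullet$ of $\mathcal{B}$ whose homotopy is, by definition, the graded $\Pi$-algebra $\Lambda := \mathbb{L}_*T(C) = \pi_*(TP_\bullet)$ over the algebraic theory underlying $\mathcal{B}$. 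Since each $P_n$ is free in $\mathcal{C}$, the hypothesis guarantees that every $TP_n$ is $Z$-acyclic; trivially so is every free object of $\mathcal{B}$.

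Next I would build a bisimplicial object $W_{\bullet\bullet}$ of $\mathcal{B}$ that ``doubly resolves'' $C$. Choose a free simplicial resolution $Q_\bullet \xrightarrow{\sim} \Lambda$ of the $\Pi$-algebra $\Lambda$ in the category of $\Pi$-algebras; realise each free $\Pi$-algebra $Q_s$ as the homotopy of a levelwise-free simplicial object $W_{\bullet s}$ of $\mathcal{B}$ (free $\Pi$-algebras are precisely the homotopy $\Pi$-algebras of wedges of simplicial spheres); and, using freeness of the $W_{\bullet s}$, lift the augmentation $Q_\bullet \to \Lambda$ to a map $W_{\bullet\bullet} \to TP_\bullet$ of bisimplicial objects, the target constant in the new variable $s$. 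Then every $W_{ns}$ is free in $\mathcal{B}$, the diagonal $\mathrm{diag}\,W_{\bullet\bullet}$ is levelwise free, and the bisimplicial spectral sequence (computing the homotopy of $W_{\bullet\bullet}$ first in the realising direction, where $\pi_*(W_{\bullet s}) = Q_s$, then in the $s$-direction, where $Q_\bullet$ is acyclic) shows that $\mathrm{diag}\,W_{\bullet\bullet} \to TP_\bullet$ is a weak equivalence. Now apply $Z$ degreewise.

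To identify the abutment I would use that $\mathrm{diag}\,ZW_{\bullet\bullet} = Z(\mathrm{diag}\,W_{\bullet\bullet})$, that $\mathrm{diag}\,W_{\bullet\bullet}$ and $TP_\bullet$ are both levelwise $Z$-acyclic, and that $Z$ carries a weak equivalence between levelwise-$Z$-acyclic simplicial objects to a weak equivalence (acyclic objects compute the derived functor); hence $\pi_*(\mathrm{diag}\,ZW_{\bullet\bullet}) \cong \pi_*(ZTP_\bullet) = \mathbb{L}_*(Z\circ T)(C)$. For the $E^2$-term I would run the other spectral sequence of the bisimplicial object $ZW_{\bullet\bullet}$: fixing the resolution degree $s$ and first taking homotopy in the realising direction of the levelwise-free object $W_{\bullet s}$, the defining property of the functors $\bar{Z}_t$ gives $\pi_t(ZW_{\bullet s}) \cong \bar{Z}_t(\pi_*(W_{\bullet s})) = \bar{Z}_t(Q_s)$, so that $E^1_{s,t} \cong \bar{Z}_t(Q_s)$ with $d^1$ induced by the simplicial structure of $Q_\bullet$. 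Since $Q_\bullet \to \Lambda$ is a free simplicial resolution of $\Lambda$ by $\Pi$-algebras, $d^1$-homology computes the derived functors $\mathbb{L}_s\bar{Z}_t$, i.e. $E^2_{s,t} \cong (\mathbb{L}_s\bar{Z}_t)(\Lambda) = (\mathbb{L}_s\bar{Z}_t)(\mathbb{L}_*T)(C)$; convergence to $\pi_{s+t}(\mathrm{diag}\,ZW_{\bullet\bullet}) = \mathbb{L}_{s+t}(Z\circ T)(C)$ is the usual first-quadrant statement.

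The step I expect to be the main obstacle is assembling and controlling this double resolution: one must know that every free $\Pi$-algebra over $\mathcal{B}$ is realisable as the homotopy of a levelwise-free simplicial object, that these realisations can be chosen functorially enough to fit over $Q_\bullet$ into a bisimplicial object carrying a comparison map to $TP_\bullet$ which is a diagonal weak equivalence, and that the functors $\bar{Z}_t$ on $\Pi$-algebras are well defined and satisfy $\pi_t(ZX_\bullet) \cong \bar{Z}_t(\pi_*X_\bullet)$ for every levelwise-free simplicial $X_\bullet$. This is exactly the technical core of \cite{BS}, built on their theory of CW/free simplicial resolutions of $\Pi$-algebras; granting it, both spectral sequences of the bisimplicial object and the two identifications above are essentially formal.
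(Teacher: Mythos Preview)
Your proposal is correct and matches the approach of Blanc--Stover that the paper invokes: the paper does not reprove this theorem but cites \cite[Theorem~4.4]{BS} and then unpacks the construction exactly as you do---a cofibrant resolution $Y_\bullet\to C$ in $s\mathcal{C}$, a bisimplicial free (DKS/$E^2$-model-structure) resolution $B_{\bullet,\bullet}$ of $TY_\bullet$ whose internal homotopy $\pi_t^i B_{\bullet,\bullet}$ is a free $\Pi$-$\mathcal{B}$-algebra resolution of $\pi_t TY_\bullet$, and the identification $E^2_{s,t}=\pi_s\pi_t^i Z(B_{\bullet,\bullet})$. Your honest flagging of the realisability and functoriality issues for $\Pi$-algebra resolutions as the technical core, to be imported from \cite{BS}, is appropriate; that is precisely what the paper does as well.
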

The condition that $TF$ is $Z$-acyclic means that the left derived
functors $\mathbb{L}_*Z$ applied to $TF$ are trivial but in degree
zero where they are isomorphic to $ZTF$. The terms $\mathbb{L}_*\bar{Z}$ are
a certain extension of the derived functors of $Z$ to the category of
$\Pi$-$\mathcal{B}$-algebras, \ie, $\bar{Z}$ takes the homotopy
operations into account that live on the homotopy groups of every
simpicial $\mathcal{B}$-algebra.

If we unravel the notation in \cite[Theorem 4.4]{BS} then the $E^2$-term gives
$$ E^2_{s,t} = \pi_s \pi_t^i Z(B_{\bullet,\bullet})$$
where the notation is as follows: Let $Y_\bullet \ra C$ be a cofibrant
resolution of $C$ in the Quillen model category of simplicial objects
in $\mathcal{C}$. Then $B_{\bullet,\bullet}$ is a free resolution of
$TY_\bullet$ in the $E^2$-model category structure on the category of
bisimplicial objects in $\mathcal{B}$ \cite[5.10]{DKS},
\cite[4.1]{BS}. As we have various $E^2$-terms floating around, we
will call this model structure the DKS-model structure.  If
$B_{\bullet,\bullet}$ in bidegree $(t,s)$ is $B_{t,s}$ then $\pi_t^i$
is the $t$th homotopy group with respect to the first simplicial
direction and then $\pi_t^i B_{\bullet,\bullet}$ is a free simplicial resolution of
$\pi_t TY_\bullet$ by $\Pi$-$\mathcal{B}$-algebras.

\begin{prop}
\begin{itemize}
\item[]
\item
If the ground field is $\F_2$ and if we consider the sequence of functors
$$\xymatrix@1{{\rg1} \ar[rr]^{Q_a} & & {\rlie1} \ar[rr]^{Q_{\rlie1}} &
  & {\grf2}},$$
then for any $C \in \rg1$ the $E^2$-term of the composite functor spectral sequence simplifies to
$$ E^2_{s,t} = (\mathbb{L}_s(\bar{Q}_{\rlie1})_t)(\AQ_*(C|\F_2,\F_2)).$$
\item
For the sequence
$$\xymatrix@1{{\ng} \ar[rr]^{Q_a} & & {\nlie} \ar[rr]^{Q_{nL}} & & {\grvctq}}$$
over the rationals and for $C \in \ng$, the spectral sequence has $E^2$-term isomorphic to
$$ E_{s,t}^2 = (\mathbb{L}_s(\bar{Q}_{\nlie})_t)(\AQ_*(C|\Q,\Q)).$$

\end{itemize}
\end{prop}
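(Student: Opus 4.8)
The plan is to read this off directly from Theorem~\ref{thm:bsss}, once the two functors are named and the single hypothesis of that theorem is checked; I carry out the $\F_2$-statement, the rational one being word-for-word the same after replacing $1rL$ by $nL$ and $\F_2$ by $\Q$. Concretely I would apply Theorem~\ref{thm:bsss} with $\mathcal{C}=\rg1$, $\mathcal{B}=\rlie1$, $\mathcal{A}=\grf2$, $T=Q_a\colon\rg1\to\rlie1$ and $Z=Q_{\rlie1}\colon\rlie1\to\grf2$, so that $Z\circ T=Q_{1rG}$ in view of the factorization $Q_{1rG}=Q_{\rlie1}\circ Q_a$ recalled above. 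Theorem~\ref{thm:bsss} then outputs a spectral sequence $E^2_{s,t}=(\mathbb{L}_s\bar{Z}_t)(\mathbb{L}_*T)(C)\Rightarrow(\mathbb{L}_{s+t}Q_{1rG})(C)$, and the content of the Proposition is exactly the identification of the $\Pi$-$\rlie1$-algebra $\mathbb{L}_*T(C)$.

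First I would verify the hypothesis of Theorem~\ref{thm:bsss}, namely that $TF$ is $Z$-acyclic for every free $F$ in $\rg1$. Since the free $1$-restricted Gerstenhaber algebra functor factors as $S\circ 1rL$, a free object is $F=S(1rL(V))$ for a graded $\F_2$-vector space $V$, and because the algebra indecomposables of a free graded commutative algebra return its generating space we get $TF=Q_a(S(1rL(V)))=1rL(V)$, which is again a \emph{free} $1$-restricted Lie algebra. The derived functors $\mathbb{L}_sQ_{\rlie1}$ vanish on such an object for $s>0$ and give $V$ for $s=0$: a free object is cofibrant as a constant simplicial object, and alternatively this is visible from the Tor-description of Remark~\ref{rem:lieviator}, since $\mathrm{Tor}^{T(V)}_{\geq2}(\F_2,\F_2)=0$. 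Hence $TF$ is $Z$-acyclic and Theorem~\ref{thm:bsss} applies.

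The only real work is then to identify $\mathbb{L}_*T(C)=\mathbb{L}_*Q_a(C)$ — the derived indecomposables of $C$ computed \emph{inside} $\rg1$, together with their $\Pi$-$\rlie1$-algebra structure — with $\AQ_*(C|\F_2,\F_2)$. I would do this by choosing the standard cotriple resolution $Y_\bullet\to C$ attached to the free--forgetful adjunction between $\grf2$ and $\rg1$ (compare Lemma~\ref{lem:1gres} and Remark~\ref{rem:e2smoothchartwo}); it is levelwise free and has the usual simplicial CW structure, and since each $Y_\ell=S(1rL(\cdots))$ is a free graded commutative algebra, forgetting the $1$-restricted Gerstenhaber structure turns $Y_\bullet$ into a cofibrant simplicial graded commutative $\F_2$-algebra, while $Y_\bullet\to C$ remains a weak equivalence because weak equivalences in either category are detected on the underlying simplicial sets. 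Using that $Q_a$ of an augmented algebra $C$ is $\Omega^1_{C|\F_2}\otimes_C\F_2$ and that $Q_a$ of a free graded commutative algebra is its generating vector space, $Q_a(Y_\bullet)$ is precisely the simplicial graded vector space computing André--Quillen homology, so $\pi_*Q_a(Y_\bullet)=\AQ_*(C|\F_2,\F_2)$. Substituting into the displayed $E^2$-term gives $E^2_{s,t}=(\mathbb{L}_s(\bar{Q}_{\rlie1})_t)(\AQ_*(C|\F_2,\F_2))$, and the rational statement follows in the same way with $Q_{nL}$ in place of $Q_{\rlie1}$ and $\AQ_*(C|\Q,\Q)$ in place of $\AQ_*(C|\F_2,\F_2)$.

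I expect the main obstacle to be exactly this identification step: one must be sure that computing the derived algebra-indecomposables in the larger category $\rg1$ (resp. $\ng$) and then forgetting really reproduces the André--Quillen homology of the bare commutative algebra, rather than some genuinely richer invariant. The argument hinges on the factorizations $1rG=S\circ 1rL$ and $nG=S\circ nL$, which force free simplicial $\rg1$- (resp. $\ng$-) resolutions to forget to free, hence cofibrant, simplicial commutative algebra resolutions, together with the fact that the relevant weak equivalences are created on underlying simplicial sets; without both inputs the two derived functors need not agree.
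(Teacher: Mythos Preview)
Your proof is correct and follows essentially the same route as the paper: both apply Theorem~\ref{thm:bsss} with $T=Q_a$, $Z=Q_{\rlie1}$ (resp.\ $Q_{nL}$), verify acyclicity via the factorization $1rG=S\circ 1rL$ so that $Q_a(S(1rL(V)))=1rL(V)$ is free and hence $Z$-acyclic, and identify $\mathbb{L}_*Q_a(C)$ with Andr\'e--Quillen homology because the cotriple resolution in $\rg1$ (resp.\ $\ng$) is levelwise free graded commutative. One nitpick: the Tor-description in Remark~\ref{rem:lieviator} is stated for (restricted) Lie algebras rather than $1$-restricted ones, so strictly speaking you should suspend before invoking it; your first argument (free objects are cofibrant as constant simplicial objects) already suffices.
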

\begin{proof}
There are two adjoint pairs, $(\rg1,V)$ and $(\ng,V)$,  where $V$
denotes the forgetful functor to the underlying category of graded
vector spaces. Associated to these are standard simplicial
resolutions for calculating $\mathbb{L}_*Q_a(C)$, namely
$(\rg1 \circ V)^{\bullet +1}(C)$ for characteristic two and $(\ng
\circ V)^{\bullet +1}(C)$ for characteristic zero. The free
$1$-restricted Gerstenhaber algebra generated by a graded vector
space $W_*$ is $S(1rL(W_*))$, the free graded commutative algebra generated by
the free $1$-restricted Lie algebra  on $W_*$. In particular the above
mentioned resolutions consist of free graded commutative
algebras and $Q_a(S(1rL(W_*)))$  is $1rL(W_*)$, which is
$Q_{1rL}$-acyclic. Since the derived functors of $Q_a$ compute Andr\'e-Quillen
homology we get a spectral sequence of the form above. Similar
arguments hold for $n$-Gerstenhaber algebras.
\end{proof}

\begin{rem} \label{rem:resolutions}
How can one calculate these $E^2$-terms? First one resolves $C$
simplicially by free Gerstenhaber algebras, $P_\bullet \ra C$, and takes
indecomposables, $Q_a(P_\bullet)$. This is now a simplicial object in
some category of Lie algebras ($n$-Lie or restricted $1$-Lie), thus
one has to find a free resolution of this object in the DKS-model structure
as explained in \cite[4.1.1]{BS}
\end{rem}

Over the rationals we want to compare $E_n$-homology of a commutative
algebra with $E_m$-homology for $n\neq m$. To this end, we first
compare the model category structures on the corresponding categories
of simplicial shifted Lie algebras.

Let us briefly recall the model category structure on simplicial
$n$-Lie algebras, $s\nlie$ \cite[II, \S 4]{Q}:
\begin{itemize}
\item
A map $f\colon \mathfrak{g}_\bullet \ra \mathfrak{g}'_\bullet$ is a
\emph{weak equivalence} if $U(f)$ is a weak equivalence of simplicial
$\Q$-vector spaces.
\item
Such a map is a \emph{fibration}, if the induced map $\bar{f}$
$$\xymatrix{
{\mathfrak{g}_\bullet} \ar@{.>}[rd]^{\bar{f}} \ar@/^3ex/[rrd]^{f}
\ar@/_2ex/[rdd]&{}&{}\\
{}&{\pi_0(\mathfrak{g}_\bullet) \times_{\pi_0(\mathfrak{g}'_\bullet)}
  \mathfrak{g}'_\bullet} \ar[r] \ar[d]& {\mathfrak{g}'_\bullet}
\ar[d]\\
{}&{\pi_0(\mathfrak{g}_\bullet)} \ar[r]^{\pi_0(f)}&{\pi_0(\mathfrak{g}'_\bullet)}
}$$
is surjective.
\item
A map is a \emph{cofibration} if it has the left lifting property with
respect to acyclic fibrations.
\end{itemize}
\begin{prop} \label{prop:mcat}
The model categories of simplicial $n$-Lie algebras and of simplicial
graded Lie algebras are Quillen equivalent.
\end{prop}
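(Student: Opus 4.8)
The plan is to exhibit a Quillen equivalence directly, using the $n$-fold suspension. Recall from Remark \ref{rem:suspensions} that the functors $\Sigma^n\colon \nlie \to \lie$ and $\Sigma^{-n}\colon \lie \to \nlie$ form an (adjoint) equivalence of categories; in particular both are exact and preserve all (co)limits, free objects, and the forgetful functors to graded vector spaces in the obvious way (they just shift the internal grading). Applying them levelwise gives an adjoint pair of functors $\Sigma^n\colon s\nlie \rightleftarrows s\lie \colon \Sigma^{-n}$ between the simplicial categories. First I would check that $\Sigma^n$ is a left Quillen functor. Since a weak equivalence in $s\nlie$ is by definition a map $f$ with $U(f)$ a weak equivalence of simplicial $\Q$-vector spaces, and since $U\circ\Sigma^n$ agrees with $\Sigma^n\circ U$ up to the harmless internal shift, $\Sigma^n$ preserves weak equivalences. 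For fibrations one uses the description above: $\Sigma^n$ commutes with $\pi_0$ and with the pullback $\pi_0(\mathfrak{g}_\bullet)\times_{\pi_0(\mathfrak{g}'_\bullet)}\mathfrak{g}'_\bullet$ (as it preserves finite limits), and surjectivity of $\bar f$ is detected on underlying graded vector spaces, so $\Sigma^n$ preserves fibrations as well. Hence $\Sigma^n$ preserves acyclic fibrations, and by the lifting-property definition of cofibrations its left adjoint — wait, $\Sigma^n$ itself is the left adjoint here, so I instead note that $\Sigma^{-n}$ preserves fibrations and acyclic fibrations by the same argument, which says precisely that $(\Sigma^n,\Sigma^{-n})$ is a Quillen pair.

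Next I would upgrade this Quillen pair to a Quillen equivalence. Because $\Sigma^n$ and $\Sigma^{-n}$ are mutually inverse equivalences of the underlying ordinary categories, the unit $\id \Rightarrow \Sigma^{-n}\Sigma^n$ and counit $\Sigma^n\Sigma^{-n}\Rightarrow\id$ are natural isomorphisms already on the point-set level; applying them levelwise they remain isomorphisms of simplicial objects. Thus for any cofibrant $\mathfrak{g}_\bullet \in s\nlie$ and any fibrant $\mathfrak{h}_\bullet\in s\lie$, a map $\Sigma^n\mathfrak{g}_\bullet \to \mathfrak{h}_\bullet$ is a weak equivalence if and only if its adjunct $\mathfrak{g}_\bullet\to\Sigma^{-n}\mathfrak{h}_\bullet$ is, because the two are related by the isomorphisms above and $\Sigma^{\pm n}$ both preserve and reflect weak equivalences (again since a weak equivalence is tested on underlying simplicial vector spaces, up to shift). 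This is exactly the definition of a Quillen equivalence, so the two model categories are Quillen equivalent.

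The main point to be careful about — the step I expect to be the only real obstacle — is verifying that the model structure on $s\nlie$ as recalled above actually exists and has the stated (co)fibrations and weak equivalences for shifted Lie algebras, rather than just for ordinary ($0$-)Lie algebras; Quillen's construction in \cite[II, \S 4]{Q} is phrased for categories satisfying certain smallness and generation hypotheses, and one should note that $\nlie$, being equivalent to $\lie$, inherits exactly these hypotheses, so the transferred model structure exists and agrees with the one described. Once that bookkeeping is in place, everything else is formal: the entire content is that $\Sigma^n$ is an equivalence of categories that visibly respects the generating data of the model structures, so it automatically induces a Quillen equivalence. I would also remark in passing that the analogous statement holds over $\F_2$ for simplicial (restricted) $1$-Lie algebras, by the suspension equivalence cited in Remark \ref{rem:suspensions}, with an identical argument.
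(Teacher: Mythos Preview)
Your proposal is correct and follows essentially the same approach as the paper: both arguments use that the suspension equivalence $(\Sigma^n,\Sigma^{-n})$ of Remark~\ref{rem:suspensions} preserves and reflects weak equivalences (tested on underlying simplicial vector spaces) and fibrations (via surjectivity), so the induced adjunction on simplicial objects is a Quillen equivalence. Your version is simply more detailed---spelling out the pullback compatibility for fibrations, the unit/counit isomorphisms, and the existence of the model structure on $s\nlie$---whereas the paper's proof is a two-sentence sketch.
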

\begin{proof}
The equivalence $(\Sigma^n, \Sigma^{-n})$ between $n$-Lie algebras and
graded Lie algebras that we mention in  Remark \ref{rem:suspensions}
preserves and detects weak equivalences as these are given by
reference to the underlying simplicial graded vector spaces, hence a
morphism
$f\colon \mathfrak{g} \ra \Sigma^{-n}\mathfrak{g}'$ of $n$-Lie algebras is a
weak equivalence if and only if $\Sigma^n(f)\colon \Sigma^n
\mathfrak{g} \ra \mathfrak{g}'$ is one.
Similarly, suspension does not affect surjectivity of maps.
\end{proof}
What we actually need is to extend this Quillen equivalence to the
corresponding model categories of simplicial  Lie $\Pi$-algebras.

\begin{thm} \label{thm:quillenequ}
The model categories of simplicial $n$-Lie-$\Pi$ algebras and of
simplicial graded Lie $\Pi$-algebras are Quillen equivalent.
\end{thm}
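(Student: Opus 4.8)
The plan is to upgrade the Quillen equivalence of Proposition \ref{prop:mcat} to the level of $\Pi$-algebras by checking that the suspension/desuspension equivalence $(\Sigma^n,\Sigma^{-n})$ is compatible with the homotopy operations that define the $E^2$-model (DKS) structure on bisimplicial objects, equivalently with the $\Pi$-algebra structure on homotopy groups. First I would recall that for a category $\mathcal{C}$ of graded (restricted) Lie algebras the $\Pi$-$\mathcal{C}$-algebras are modelled on the homotopy groups $\pi_*X$ of simplicial objects $X$ in $\mathcal{C}$, together with the primary homotopy operations coming from maps of free objects; these operations are generated by the algebraic operations of $\mathcal{C}$ (here: the graded Lie bracket of the appropriate degree, and in characteristic two the restriction) together with the simplicial operations (the $\pi_0$-action and the operations encoded by homotopy classes of maps between free objects on spheres). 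Since $\Sigma^n$ is an isomorphism of categories $\nlie \to \lie$ that is additive, sends free $n$-Lie algebras on a graded vector space $V$ to free graded Lie algebras on $\Sigma^n V$, and intertwines the bracket of degree $n$ with the bracket of degree $0$ up to the predictable sign, it induces a bijection on hom-sets between free objects and hence an isomorphism between the categories of $\Pi$-$\nlie$-algebras and $\Pi$-$\lie$-algebras.

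Concretely I would proceed as follows. First, show that $\Sigma^n$ and $\Sigma^{-n}$ prolong to an adjoint equivalence $s\nlie \simeq s\lie$ of simplicial categories (this is immediate from Proposition \ref{prop:mcat}, since prolongation of an equivalence is an equivalence and the model structures are the prolonged ones). Second, pass to the DKS/$E^2$-model structure on bisimplicial objects: by \cite[5.10]{DKS} (see also \cite[4.1]{BS}) this structure is built functorially out of the underlying simplicial model category together with the set of spherical objects and their homotopy operations, so a Quillen equivalence of simplicial model categories that takes spherical objects to spherical objects and induces an isomorphism on homotopy operations lifts automatically to a Quillen equivalence of the $E^2$-model structures. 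Third — the only point that requires genuine verification — check that $\Sigma^n$ matches up the homotopy operations: the bracket operation on $\pi_*$ is detected by maps of free objects, and since $\Sigma^n$ is an isomorphism on all such hom-sets (with a degree shift that is harmless because it is the same shift on source and target) the induced map on $\Pi$-algebra structures is an isomorphism of (graded) algebraic theories. Here one uses that over $\Q$ there are no exotic homotopy operations beyond those generated by the Lie bracket and the simplicial structure, and in the characteristic two case one also checks compatibility with the restriction, using the identification in Remark \ref{rem:suspensions} (the reference \cite[III \S 15]{CLM} handles the restricted case). Finally, assemble: $\Sigma^n$ and $\Sigma^{-n}$ descend to an adjoint pair between simplicial $n$-Lie-$\Pi$-algebras and simplicial graded Lie-$\Pi$-algebras, they preserve and detect weak equivalences (which are defined by reference to the underlying graded vector spaces, hence are unaffected by suspension, exactly as in Proposition \ref{prop:mcat}) and they are mutually inverse on the point-set level, so they form a Quillen equivalence.

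The main obstacle, and the step I expect to take real care over, is the identification of the $\Pi$-algebra structure — that is, proving that the set of primary homotopy operations for simplicial $n$-Lie algebras is carried isomorphically by $\Sigma^n$ onto that for simplicial graded Lie algebras, including keeping track of the internal-degree shift so that the bigraded bookkeeping is consistent. The naive statement "suspension is an equivalence of categories" does not by itself give this, because the $\Pi$-algebra structure involves homotopy classes of maps of simplicial free objects, not just the one-categorical morphisms; one has to know that $\Sigma^n$, being additive and compatible with free objects, induces an equivalence on the relevant homotopy categories of free objects and hence a dictionary between the two collections of operations. Once that dictionary is in place, everything else — weak equivalences, fibrations, the DKS lifting — is formal and parallels Proposition \ref{prop:mcat} verbatim.
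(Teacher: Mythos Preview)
Your proposal is correct and follows essentially the same approach as the paper: the key step in both is to show that the homotopy operations parametrizing the $\Pi$-structure, namely homotopy classes of maps between free (shifted) Lie algebras on graded simplicial spheres, are carried bijectively by $\Sigma^n$ via the natural isomorphism $\Sigma^n \circ nL \cong L \circ \Sigma^n$ (which just shifts the internal sphere degrees), after which the model-categorical conclusion follows because weak equivalences and fibrations are detected on underlying simplicial graded vector spaces. One small point: the theorem is about the standard model structure on simplicial $\Pi$-algebras, not the DKS $E^2$-structure on bisimplicial Lie algebras, so your second step is a slight detour --- but the essential argument (matching operations, then invoking that (de)suspension preserves and detects weak equivalences and fibrations) is exactly what the paper does.
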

\begin{proof}
We first show that the $n$-fold suspension and desuspension functors
pass to functors between (shifted)-$\Pi$-Lie
algebras. Let $\mathfrak{g}_*$ be an $n$-Lie algebra with $n$-Lie
homotopy operations. These operations are parametrized by
elements in
$$ [nL(\Q\mathbb{S}^m(\ell)_\bullet),nL(\bigoplus_{i=1}^N
\Q\mathbb{S}^{m_i}(\ell_i)_\bullet)]_{s\nlie}$$
where $\mathbb{S}^m(\ell)$ is the graded simplicial set that has the
simplicial $m$-sphere in degree $\ell$. If we consider
$\Sigma^n\mathfrak{g}_*$, then this inherits operations parametrized by
\begin{align*}
[\Sigma^n nL(\Q\mathbb{S}^m(\ell)_\bullet),\Sigma^n nL(\bigoplus_{i=1}^N
\Q\mathbb{S}^{m_i}(\ell_i)_\bullet)]_{s\lie} = &
[L(\Sigma^n\Q\mathbb{S}^m(\ell)_\bullet),L(\bigoplus_{i=1}^N
\Sigma^n\Q\mathbb{S}^{m_i}(\ell_i)_\bullet)]_{s\lie} \\
= & [L(\Q\mathbb{S}^m(\ell-n)_\bullet),L(\bigoplus_{i=1}^N
\Q\mathbb{S}^{m_i}(\ell_i-n)_\bullet)]_{s\lie}.
\end{align*}
Vice versa the $n$-fold desuspension of a graded Lie $\Pi$-algebra
inherits Lie  homotopy operations.

Weak equivalences and fibrations are again determined by the
underlying simplicial vector spaces and on this level (de)suspensions
just shift the internal grading.
\end{proof}

\begin{lem}\label{lem:aquiBSE2}
Let $A$ be an augmented $E$-algebra and $\ell \in \mathbb{Z}$. Then there
is a natural isomorphism
$$ \Sigma^{n-\ell} (\mathbb{L}_s \bar{Q}_{nL})_t(\AQ_*(H_*A|\Q;\Q)) \cong
(\mathbb{L}_s \bar{Q}_{\ell L})_t(\Sigma^{n-\ell} \AQ_*(H_*A|\Q;\Q)).$$
\end{lem}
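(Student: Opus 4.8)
The plan is to reduce the statement to the fact, already established in Theorem \ref{thm:quillenequ}, that the $n$-fold suspension and desuspension functors induce a Quillen equivalence between simplicial $n$-Lie $\Pi$-algebras and simplicial $\ell$-Lie $\Pi$-algebras (composing the two equivalences with graded Lie $\Pi$-algebras, or applying the proof of that theorem directly for the pair $(n,\ell)$). Write $\mathfrak{a}_\bullet$ for a cofibrant resolution of $\AQ_*(H_*A|\Q;\Q)$ as a simplicial $n$-Lie $\Pi$-algebra (in the sense used to define $(\mathbb{L}_s\bar{Q}_{nL})_t$ via the DKS-model structure, i.e. we resolve the $\Pi$-$\nlie$-algebra $\AQ_*(H_*A|\Q;\Q)$ by free simplicial objects and then apply $\bar{Q}_{nL}$ levelwise, taking $\pi_s\pi_t$). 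Under the Quillen equivalence $\Sigma^{n-\ell}$, the complex $\Sigma^{n-\ell}\mathfrak{a}_\bullet$ is a cofibrant resolution of $\Sigma^{n-\ell}\AQ_*(H_*A|\Q;\Q)$ as a simplicial $\ell$-Lie $\Pi$-algebra, so it can be used to compute $(\mathbb{L}_s\bar{Q}_{\ell L})_t$ of that object.

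The remaining point is that $\bar{Q}_{\ell L}$ applied to $\Sigma^{n-\ell}$ of a free $n$-Lie algebra agrees, after the internal-degree shift, with $\bar{Q}_{nL}$ applied to the free $n$-Lie algebra itself. On underlying vector spaces this is clear: $Q_{nL}(nL(W_*)) = W_*$ and $Q_{\ell L}(\Sigma^{n-\ell}nL(W_*)) = Q_{\ell L}(\ell L(\Sigma^{n-\ell}W_*)) = \Sigma^{n-\ell}W_*$, using the isomorphism of monads $nL \cong \Sigma^{\ell-n}\ell L\Sigma^{n-\ell}$ from the proof of Corollary \ref{cor:shift}. For the $\bar{Q}$-versions one must also check that the homotopy operations match up: but this is exactly the bijection of operation-parametrizing sets
$$[\Sigma^n nL(\Q\mathbb{S}^m(\ell')_\bullet),\Sigma^n nL(\textstyle\bigoplus_i \Q\mathbb{S}^{m_i}(\ell_i)_\bullet)]_{s\lie} = [L(\Q\mathbb{S}^m(\ell'-n)_\bullet),L(\textstyle\bigoplus_i\Q\mathbb{S}^{m_i}(\ell_i-n)_\bullet)]_{s\lie}$$
exhibited in the proof of Theorem \ref{thm:quillenequ} (applied there for $\ell=0$, and applied here for the pair $(n,\ell)$), so that $\bar{Q}_{nL}$ and $\bar{Q}_{\ell L}$ are intertwined by $\Sigma^{n-\ell}$ as functors out of the respective categories of $\Pi$-algebras. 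Hence $\bar{Q}_{\ell L}\circ\Sigma^{n-\ell}$ and $\Sigma^{n-\ell}\circ\bar{Q}_{nL}$ are naturally isomorphic on free $\Pi$-$\nlie$-algebras, and therefore their left derived functors agree on any object.

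Putting these together: $(\mathbb{L}_s\bar{Q}_{\ell L})_t(\Sigma^{n-\ell}\AQ_*(H_*A|\Q;\Q))$ is computed by $\pi_s\pi_t\bar{Q}_{\ell L}(\Sigma^{n-\ell}\mathfrak{a}_\bullet) \cong \pi_s\pi_t\Sigma^{n-\ell}\bar{Q}_{nL}(\mathfrak{a}_\bullet) \cong \Sigma^{n-\ell}(\mathbb{L}_s\bar{Q}_{nL})_t(\AQ_*(H_*A|\Q;\Q))$, where in the last step the suspension commutes with taking $\pi_s$ and $\pi_t$ since it only relabels the internal grading and leaves the simplicial and resolution directions untouched; naturality in $A$ is inherited from naturality of all the functors involved. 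The main obstacle is the middle step — verifying that the identification of homotopy operations from Theorem \ref{thm:quillenequ} is compatible with the indecomposables functors $\bar{Q}$, i.e. that $\bar{Q}_{\ell L}(\Sigma^{n-\ell}(-))$ and $\Sigma^{n-\ell}\bar{Q}_{nL}(-)$ really are naturally isomorphic as functors on $\Pi$-algebras and not merely on underlying graded vector spaces; everything else is bookkeeping with suspensions and the already-proved Quillen equivalence.
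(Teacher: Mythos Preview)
Your strategy is correct and is morally the same as the paper's: suspend the resolution, check it remains cofibrant, and use that indecomposables commute with suspension. The execution differs in one useful way, though.

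The paper does not work at the level of simplicial $\Pi$-$\nlie$-algebras. Instead it stays with the bisimplicial $n$-Lie resolution $B_{\bullet,\bullet}$ of $Q_aY_\bullet$ in the DKS model structure and verifies directly, using the explicit Blanc--Stover cofibrancy criteria (each $B_{\bullet,s}\simeq nL(X[s]_\bullet)$ with $X[s]_\bullet$ a wedge of spheres, degeneracies being summand inclusions up to homotopy), that $\Sigma^{n-\ell}B_{\bullet,\bullet}$ is again DKS-cofibrant as a bisimplicial $\ell$-Lie object. The payoff is that one then only needs the elementary identity $Q_{\ell L}\circ\Sigma^{n-\ell}\cong\Sigma^{n-\ell}\circ Q_{nL}$ for \emph{ordinary} Lie algebras (obtained from the adjunction with the trivial-Lie-structure functor), applied levelwise to $B_{\bullet,\bullet}$. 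Your ``main obstacle'' --- checking that $\bar{Q}_{\ell L}\circ\Sigma^{n-\ell}$ and $\Sigma^{n-\ell}\circ\bar{Q}_{nL}$ agree as functors on $\Pi$-algebras --- simply never arises in the paper's argument, because the paper never passes to $\Pi$-algebras before applying the indecomposables functor. Your route via Theorem~\ref{thm:quillenequ} is legitimate (and over $\Q$ the obstacle dissolves anyway, since a Lie $\Pi$-algebra is just a bigraded Lie algebra), but the paper's bisimplicial approach is more economical here. A minor point: your notation $\pi_s\pi_t\bar{Q}_{nL}(\mathfrak{a}_\bullet)$ blurs the distinction between the bisimplicial and the $\Pi$-algebra pictures; if $\mathfrak{a}_\bullet$ is genuinely simplicial in $\Pi$-algebras then $t$ is a grading index, not a homotopy degree.
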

\begin{proof}
Let $Y_{\bullet}$ be a resolution of $H_*A$ as a simplicial
augmented Gerstenhaber algebra such that $Y_{\bullet}$ is cofibrant as
a simplicial commutative algebra. As we explained above, we have
$$(\mathbb{L}_s
\bar{Q}_{nL})_t(\AQ_*(H_*A|\Q;\Q)) \cong \pi_s \pi_t^i Q_{nL}(B_{\bullet,
  \bullet})$$
where $B_{\bullet, \bullet}$ is a cofibrant replacement
of $Q_a Y_{\bullet}$ in the category of bisimplicial $n$-Lie algebras
with respect to the DKS-model structure. According to \cite[4.1]{BS}
the following conditions are sufficient for $B_{\bullet, \bullet}$ to
be cofibrant:
\begin{itemize}
\item For fixed external degree $s$ each $B_{\bullet, s}$ is
  homotopy equivalent to $nL(X[s]_{\bullet})$ as a simplicial $n$-Lie algebra,
  where $X[s]_{\bullet}$ is weakly equivalent to a sum of spheres
  $\bigoplus_i \mathbb{Q} S^{m_i}(r_i)_{\bullet}$.
\item The external degeneracies are induced by maps $X[s] \rightarrow
  X[s+1]$ which are inclusions of summands up to homotopy.
\end{itemize}
Blanc and Stover show as well that such a $B_{\bullet, \bullet}$ can
always be constructed.

Suppose that $B_{\bullet, \bullet}$ is a cofibrant replacement
fulfilling these conditions. Now consider $\Sigma^{n-\ell} B_{\bullet,
  \bullet}$. Regrading the internal nonsimplicial degree does not
affect the homotopy groups, so
$$ \pi_s \pi_t^i \Sigma^{n-\ell} B_{\bullet, \bullet} = \delta_{s,0}
\Sigma^{n-\ell} \pi_t Q_a Y_{\bullet} = \delta_{s,0} \pi_t
\Sigma^{n-\ell} Q_a Y_{\bullet}$$
and we find that $\Sigma^{n-\ell} B_{\bullet, \bullet}$ is a
resolution of $\Sigma^{n-\ell} Q_a Y_{\bullet}$. It is easy to
see that $\Sigma^{n-\ell} B_{\bullet, \bullet}$ is cofibrant as well:
We know that
$ \Sigma^{n-\ell} B_{\bullet, s} = \ell L(\Sigma^{n-\ell}
X[s]_{\bullet})$ and that suspending a simplicial sphere $\Q
S^{m_i}(r_i)$ internally just shifts $r_i$. Hence we can
compute $(\mathbb{L}_s \bar{Q}_{\ell L})_t(\Sigma^{n-\ell}
\AQ_*(H_*A|\Q;\Q))$ as the homotopy groups of $Q_{\ell L}\Sigma^{n-\ell}
B_{\bullet, \bullet}$. Exploiting the adjunction between $Q_{\ell L}$
and the functor which endows a graded $\mathbb{Q}$-vector space with a
trivial $\ell$-Lie algebra structure we obtain
$$Q_{\ell L} \circ \Sigma^{n-\ell} \cong \Sigma^{n-\ell} Q_{nL}.$$
Therefore the homotopy groups in question are the homotopy groups of
$$\Sigma^{n-\ell} Q_{nL} B_{\bullet, \bullet}$$
and the result follows.
\end{proof}

Since the composite functor spectral sequence is the spectral sequence
associated to the bisimplicial object given by applying the
indecomposables functor to a resolution with respect to the DKS model
structure it is clear that an isomorphism of resolutions yields a
morphism of spectral sequences. Note that deriving a functor followed
by a suspension equals suspending the derived functor.

\begin{cor} The above isomorphism is part of a morphism between the
  suspension of the Grothendieck spectral sequence
$$ \Sigma^{n-\ell} ( \mathbb{L}_s \bar{Q}_{nL})_t(\AQ_*(H_*A|\Q;\Q))
\Rightarrow \Sigma^{n-\ell}\mathbb{L}_{s+t} Q_{nG} H_*(\bar{A})$$
associated to $Q_{nL} \circ Q_a$ and the Grothendieck spectral sequence
$$  (\mathbb{L}_s \bar{Q}_{\ell L})_t(\Sigma^{n-\ell}\AQ_*(H_*A|\Q;\Q))
\Rightarrow \mathbb{L}_{s+t} (Q_{\ell L}\circ \Sigma^{n-\ell} \circ
Q_a) H_*(\bar{A}).$$
\end{cor}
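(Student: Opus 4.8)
The plan is to realise the two Grothendieck spectral sequences as the spectral sequences of one and the same bisimplicial object, up to the internal regrading $\Sigma^{n-\ell}$, using the explicit cofibrant replacement that already appears in the proof of Lemma~\ref{lem:aquiBSE2}. First I would recall, from Theorem~\ref{thm:bsss} and the discussion after it, that the composite functor spectral sequence of a pair $Z\circ T$ at an object $C$ is nothing but the spectral sequence associated to the bisimplicial object $Z(B_{\bullet,\bullet})$, where $Y_\bullet\to C$ is a cofibrant simplicial resolution and $B_{\bullet,\bullet}$ is a DKS-cofibrant replacement of $TY_\bullet$; in particular an isomorphism of such bisimplicial objects induces an isomorphism of the associated spectral sequences, including the abutments $\mathbb{L}_*(Z\circ T)C$. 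I would apply this with $T=Q_a$, $C=H_*(\bar A)$, and with $Y_\bullet$ and $B_{\bullet,\bullet}$ the very resolution and replacement constructed in the proof of Lemma~\ref{lem:aquiBSE2}. Taking $Z=Q_{nL}$ then exhibits the Grothendieck spectral sequence for $Q_{nL}\circ Q_a$ at $H_*(\bar A)$ as the spectral sequence of the bisimplicial graded vector space $Q_{nL}(B_{\bullet,\bullet})$.

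Next I would observe that $\Sigma^{n-\ell}$ on graded vector spaces is exact — it only shifts the internal degree — so suspending that spectral sequence produces exactly the spectral sequence associated to $\Sigma^{n-\ell}Q_{nL}(B_{\bullet,\bullet})$, with $E^2$-term $\Sigma^{n-\ell}(\mathbb{L}_s\bar{Q}_{nL})_t(\AQ_*(H_*A|\Q;\Q))$ and abutment $\Sigma^{n-\ell}\mathbb{L}_{s+t}Q_{nG}H_*(\bar A)$. On the other side, the proof of Lemma~\ref{lem:aquiBSE2} already shows that $\Sigma^{n-\ell}B_{\bullet,\bullet}$ is a DKS-cofibrant replacement of $\Sigma^{n-\ell}Q_a Y_\bullet$, a bisimplicial $\ell$-Lie algebra representing $\Sigma^{n-\ell}\AQ_*(H_*A|\Q;\Q)$; hence the second spectral sequence is the one associated to $Q_{\ell L}(\Sigma^{n-\ell}B_{\bullet,\bullet})$. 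The final step is to invoke the natural isomorphism $Q_{\ell L}\circ\Sigma^{n-\ell}\cong\Sigma^{n-\ell}\circ Q_{nL}$ (from the adjunction, exactly as in the proof of Lemma~\ref{lem:aquiBSE2}) to identify the two bisimplicial graded vector spaces $\Sigma^{n-\ell}Q_{nL}(B_{\bullet,\bullet})$ and $Q_{\ell L}(\Sigma^{n-\ell}B_{\bullet,\bullet})$; this induces an isomorphism of spectral sequences which on $E^2$ is precisely the identification of Lemma~\ref{lem:aquiBSE2} and on the abutment gives $\Sigma^{n-\ell}\mathbb{L}_{s+t}Q_{nG}H_*(\bar A)\cong\mathbb{L}_{s+t}(Q_{\ell L}\circ\Sigma^{n-\ell}\circ Q_a)H_*(\bar A)$. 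An isomorphism is in particular a morphism, which is what the corollary asks for, and naturality in $A$ is inherited from naturality of $Y_\bullet$, of $B_{\bullet,\bullet}$, and of the suspension isomorphisms.

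The main obstacle, and really the only non-formal point, is already dealt with in Lemma~\ref{lem:aquiBSE2}: namely that $\Sigma^{n-\ell}B_{\bullet,\bullet}$ still satisfies the cofibrancy criteria of \cite[4.1]{BS} and that suspension commutes with $Q_{nL}$. Granting that, the remaining care needed here is bookkeeping: I would want to make sure the Blanc--Stover spectral sequence is genuinely functorial in the bisimplicial object — that is, that it is the spectral sequence of a first-quadrant bicomplex (or filtered total object), so that an isomorphism of bisimplicial objects really does yield an isomorphism of spectral sequences together with their abutments — and I would note that, since we transport one fixed DKS-cofibrant replacement directly to the other side rather than choosing replacements independently, the usual ``independence of the choice of replacement'' issue never enters.
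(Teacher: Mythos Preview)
Your proposal is correct and follows exactly the paper's approach: the paper's entire justification is the sentence preceding the corollary, namely that the composite functor spectral sequence is the spectral sequence of the bisimplicial object obtained by applying the indecomposables functor to a DKS resolution, so that an isomorphism of resolutions (as produced in Lemma~\ref{lem:aquiBSE2}) yields a morphism of spectral sequences, together with the observation that suspension commutes with taking derived functors. You have simply unpacked this one-sentence argument in detail, invoking the same ingredients ($\Sigma^{n-\ell}B_{\bullet,\bullet}$ cofibrant, $Q_{\ell L}\circ\Sigma^{n-\ell}\cong\Sigma^{n-\ell}\circ Q_{nL}$) in the same order.
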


\begin{rem}
We could deduce that the $E^2$-term of the spectral sequence
calculating $E_{n+1}$-homology is
$$ \mathbb{L}_p Q_{nG} (H_*(\bar{A})) \cong \Sigma^{\ell-n} \mathbb{L}_p
(Q_{\ell L} \circ \Sigma^{n-\ell} \circ Q_a)(H_*(\bar{A})),$$
but this is clear since
$$Q_{\ell L} \circ \Sigma^{n-\ell} \cong \Sigma^{n-\ell} Q_{nL}.$$
\end{rem}

Identifying  $n$-Lie-$\Pi$ algebra structures is
hard. Sometimes we can reduce the complexity of that task.
In characteristic zero, the Blanc-Stover spectral sequence simplifies
due to the following well-known result.

\begin{lem}
Let $k = \Q$. A Lie-$\Pi$ algebra
$\pi_*(\mathfrak{g}_{\bullet})$ is a
bigraded Lie algebra.
\end{lem}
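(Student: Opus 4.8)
The plan is to show that a simplicial $n$-Lie algebra whose homotopy carries homotopy operations has these operations organized, after passing to $\pi_*$, into an ordinary bigraded Lie bracket, by a connectivity argument on the indexing homotopy classes. The starting observation is that for $k = \Q$ the rational sphere $\Q\mathbb{S}^m(\ell)_\bullet$ is a simplicial $\Q$-vector space concentrated in a single simplicial degree, hence by Dold--Kan it is equivalent to $\Q$ placed in homotopical degree $m$ and internal degree $\ell$. Because we work over a field of characteristic zero, the free $n$-Lie algebra functor $nL$ is exact in the relevant sense, so the homotopy type of $nL(\Q\mathbb{S}^m(\ell)_\bullet)$ is again a graded $\Q$-vector space concentrated in the appropriate homotopical degree (this is essentially the rational formality used throughout the paper).

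**First I would** recall from the Blanc--Stover framework that the $n$-Lie homotopy operations on $\pi_*(\mathfrak{g}_\bullet)$ are indexed by homotopy classes
$$[nL(\Q\mathbb{S}^m(\ell)_\bullet), nL(\textstyle\bigoplus_{i=1}^N \Q\mathbb{S}^{m_i}(\ell_i)_\bullet)]_{s\nlie},$$
exactly as displayed in the proof of Theorem \ref{thm:quillenequ}. The key step is to compute these Hom-sets rationally: since source and target are, up to weak equivalence, free $n$-Lie algebras on $\Q$-vector spaces concentrated in single homotopical degrees, the mapping space is discrete in each relevant bidegree and the only operations that survive are the linear (unary) ones — which reassemble the internal grading — and the single binary bracket operation coming from the $N=2$, $m = m_1 + m_2$ case. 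All higher (Whitehead-type) brackets and the $\pi_i$-indexed secondary operations vanish rationally because they are detected by torsion or by homotopy groups of spheres that are rationally trivial. Hence the homotopy operations reduce precisely to a graded bracket on the bigraded vector space $\pi_*(\mathfrak{g}_\bullet)$.

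**Then I would** verify that this bracket satisfies the bigraded Jacobi and antisymmetry identities of Definition \ref{def:nlie} (with the internal grading playing the role that $n$ played there, suitably combined). This is forced: the identities hold up to homotopy in the simplicial category by the $n$-Lie structure on each $\mathfrak{g}_\bullet$, and since we have just shown the operations are parametrized by a discrete set, "up to homotopy" becomes "on the nose" after applying $\pi_*$. Equivalently, one can invoke the equivalence of Theorem \ref{thm:quillenequ} to transport the statement to the graded Lie (i.e.\ $0$-Lie) case, where it is the classical fact that the homotopy of a simplicial Lie algebra over $\Q$ is a bigraded Lie algebra — with the simplicial degree contributing an extra sign via Koszul convention.

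**The main obstacle** is the bookkeeping of signs and gradings: one must be careful that the bracket induced on $\pi_*(\mathfrak{g}_\bullet)$ uses the \emph{total} degree (simplicial plus internal plus the shift $n$) in the Koszul sign rule, and that the vanishing of higher operations is justified rigorously rather than merely asserted — this requires knowing that $[nL(\Q\mathbb{S}^m(\ell)_\bullet), nL(\bigoplus \Q\mathbb{S}^{m_i}(\ell_i)_\bullet)]_{s\nlie}$ is spanned by composites of the unary and binary generators, which follows from the fact that a free $n$-Lie algebra on a rational vector space, regarded simplicially, has homotopy equal to the free $n$-Lie algebra on the homotopy — again a consequence of characteristic zero formality. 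Since the paper has repeatedly used exactly this kind of collapse (e.g.\ in Lemma \ref{lem:monad} and the rational case of Section 4), I would cite that principle and keep the proof short.
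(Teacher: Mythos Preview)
Your overall strategy matches the paper's: compute the parametrizing set of $\Pi$-operations and show it coincides with the free bigraded Lie algebra on the given generators. But the execution has two genuine problems at the key step.

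First, the claim that ``only the linear (unary) ones and the single binary bracket operation'' survive is not what you want, and as stated it is false. Iterated brackets of arbitrary length are certainly present as operations; the correct statement is that \emph{every} operation is a $\Q$-linear combination of iterated brackets, i.e.\ the parametrizing set
\[
\bigl[L(\Q\mathbb{S}^m(k)),\, L(\textstyle\bigoplus_{i=1}^N \Q\mathbb{S}^{m_i}(k_i))\bigr]_{sL} \;\cong\; \pi_m\bigl(L(\textstyle\bigoplus_i \Q\mathbb{S}^{m_i}(k_i))\bigr)_k
\]
is exactly the degree-$(m,k)$ part of the free \emph{bigraded} Lie algebra on $N$ generators in bidegrees $(m_i,k_i)$.

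Second, the mechanism you invoke for this --- ``torsion or homotopy groups of spheres that are rationally trivial'' --- is not the relevant one: we are in simplicial $\Q$-vector spaces, not spaces, and there is no torsion anywhere. The paper's argument is more direct and more elementary. Write $L(X) = \bigoplus_{j\geq 0} \mathit{Lie}(j) \otimes_{\Sigma_j} X^{\otimes j}$ with the Lie operad regarded as a constant simplicial object. Over $\Q$ every $\Q[\Sigma_j]$-module is projective, so taking $\Sigma_j$-coinvariants is exact and the K\"unneth theorem yields
\[
\pi_*\bigl(\mathit{Lie}(j) \otimes_{\Sigma_j} X^{\otimes j}\bigr) \;\cong\; \mathit{Lie}(j) \otimes_{\Sigma_j} (\pi_* X)^{\otimes j},
\]
which assembles to the free bigraded Lie algebra on $\pi_*X$. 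This is the precise content of the ``characteristic zero formality'' you allude to at the end; your final paragraph does name the right fact (homotopy of a free Lie algebra is the free Lie algebra on the homotopy), but the proof of that fact \emph{is} the lemma --- it is the step that needs to be written out, not cited.
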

\begin{proof}
The Lie-$\Pi$ structure on the homotopy groups of a simplicial graded
Lie algebra is the structure induced by elements in
$$ [ L(\Q S^n(k)), L(\bigoplus_{i=1}^N \Q S^{n_i}(k_i))]_{sL}
=\pi_n(L(\bigoplus_{i=1}^N \Q S^{n_i}(k_i)))_k. $$
Set $X= \bigoplus_{i=1}^N \Q S^{n_i}(k_i)$.
Interpreting the Lie operad as a constant simplicial operad in graded
vector spaces we find that we need to calculate
$\pi_n(\bigoplus_{j\geq 0}\mathit{Lie}(j)\otimes_{\Sigma_j}(X)^{\otimes
  j})_k$.

Since over $\Q$ every $\Sigma_j$-module is projective we see that this
is isomorphic to
$$\bigoplus_{j\geq 0} \bigoplus_{a+b=k}
\mathit{Lie}(j)_{a}\otimes_{\Sigma_j}
\bigoplus_{\stackrel{n_1+\ldots+n_j=n}{b_1+\ldots+b_j=b}}
\pi_{n_1}(X)_{b_1}\otimes \ldots\otimes \pi_{n_j}(X)_{b_j},$$
\ie,  the free bigraded Lie algebra on $N$ generators of degree
$(k_i,n_i)$, where we now consider the Lie operad as an operad in bigraded
modules concentrated in bidegree $(0,0)$. This yields that all
homotopy operations on $\pi_*(\mathfrak{g}_\bullet)$ are the ones
induced by the Lie structure of $\mathfrak{g}_\bullet$ via the
Eilenberg-Zilber map.
\end{proof}

We obtain an analogous result in finite characteristic.
\begin{lem} \label{lem:pi0}
Let $k$ be $\F_p$.
If $\mathfrak{g} = \pi_*X_\bullet$ is a restricted $\Pi$-Lie-algebra
that is concentrated
in  $\pi_0X_\bullet$, then the $\Pi$-Lie-algebra structure on
$\mathfrak{g}$ reduces to a restricted Lie-algebra structure.
\end{lem}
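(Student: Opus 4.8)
The plan is to mimic the structure of the preceding rational lemma, but to replace the semisimplicity of $\Sigma_j$-representations (which fails over $\F_p$) by the hypothesis that $\mathfrak g$ is concentrated in $\pi_0$, so that the only homotopy operations that can act are those detected in simplicial degree zero. First I would recall that the restricted $\Pi$-Lie structure on $\pi_*X_\bullet$ is by definition given by the homotopy operations, i.e.\ by the elements of
$$[\mathit{1rL}(\F_p\mathbb{S}^m(\ell)_\bullet), \mathit{1rL}(\textstyle\bigoplus_{i=1}^N \F_p\mathbb{S}^{m_i}(\ell_i)_\bullet)]_{s\rlie1} \cong \pi_m\big(\mathit{1rL}(\textstyle\bigoplus_{i=1}^N \F_p\mathbb{S}^{m_i}(\ell_i)_\bullet)\big)_\ell,$$
exactly as in the proof of Proposition~\ref{thm:quillenequ} and the rational lemma above. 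An operation of this type raises simplicial degree by $m$ (and shifts internal degree as recorded by $\ell$). Since $\mathfrak g=\pi_*X_\bullet$ is concentrated in $\pi_0X_\bullet$, plugging classes of $\mathfrak g$ into such an operation produces something in $\pi_m$, which must vanish unless $m=0$. Hence only the operations with $m=0$ can act nontrivially.

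The second step is to identify the $m=0$ operations. For $m=0$ the target simplicial object $\mathit{1rL}(\bigoplus_i \F_p\mathbb{S}^0(\ell_i)_\bullet)$ is (levelwise) the free $1$-restricted Lie algebra on a discrete graded vector space, so its $\pi_0$ is just that free $1$-restricted Lie algebra itself. Thus the only operations available are the ones parametrised by elements of the free $1$-restricted Lie algebra on the generators, i.e.\ iterated brackets and restrictions of the generators together with scalar multiplication and addition — precisely the operations making up the restricted Lie algebra structure. Moreover the relations among these operations are the relations that hold in the free $1$-restricted Lie algebra, which are exactly the defining relations of a $1$-restricted Lie algebra (Definition~\ref{def:1rl}). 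Therefore the $\Pi$-algebra structure on $\mathfrak g$ collapses to the structure of a restricted Lie algebra, with no extra Dyer--Lashof-type secondary operations surviving.

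The main obstacle I anticipate is making the second step precise: one must argue that no \emph{composite} homotopy operation, even one built out of operations individually living in positive simplicial degree but assembled so that the total simplicial-degree shift is zero, can contribute. The clean way around this is to note that the $\Pi$-algebra structure is encoded by the whole diagram category of free objects $\mathit{1rL}(\bigoplus_i \F_p\mathbb{S}^{m_i}(\ell_i)_\bullet)$ and that evaluating on an object concentrated in $\pi_0$ is the same as restricting along the full subcategory of $\mathbb{S}^0(\ell)_\bullet$'s; on that subcategory the homotopy-operation Hom-sets are $\pi_0$ of free $1$-restricted Lie algebras on discrete input, which is again a free $1$-restricted Lie algebra, so the resulting algebraic theory is exactly that of $1$-restricted Lie algebras. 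This is a routine unwinding of the Blanc--Stover formalism (\cite[4.1]{BS}, \cite[5.10]{DKS}) rather than a computation, and once it is spelled out the lemma follows; one should also remark (as in the rational case) that all of these surviving operations are induced by the genuine restricted Lie structure of $X_\bullet$ via the Eilenberg--Zilber map, so the reduction is compatible with the evident functor.
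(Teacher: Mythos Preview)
Your argument is essentially the paper's own proof: both reduce to observing that, since $\mathfrak g$ is concentrated in $\pi_0$, only operations parametrised by $[rL(\F_p\mathbb S^0(\ell)),rL(\bigoplus_i\F_p\mathbb S^0(\ell_i))]_{srL}$ can act, and since the $\mathbb S^0(\ell_i)$ are simplicially constant this Hom-set is just the free restricted Lie algebra on $N$ generators, giving exactly the restricted Lie operations. Two small remarks: (i) the lemma is stated for restricted Lie algebras, so you should write $rL$ rather than $1rL$ (the argument is of course identical in either category); (ii) your worry about ``composite'' operations is unnecessary---the $\Pi$-algebra formalism already packages all composites into the Hom-sets of the indexing category, so once you have restricted to the constant spheres there is nothing further to check.
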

\begin{proof}
According to \cite[\S 3]{BS} the operations on $\pi_0X_\bullet$ are
parametrized by elements in the set of homotopy classes of simplicial
restricted Lie algebras
$$ [rL(\mathbb{S}^0(k)),rL(\bigoplus_{i=1}^N \mathbb{S}^0(k_i)]_{srL}.$$
Here, $\mathbb{S}^0(r)$ is the simplicial graded $\F_p$-vector space
that is $\F_p[r]$ in
every simplicial degree and $\F_p[r]$ is the graded $\F_p$-vector
space that is $\F_p$
concentrated in degree $r$.
As the
simplicial direction is constant in this case, the above set of
homotopy classes reduces to the set of homomorphisms of restricted
Lie-algebras
$$ rL(rL(\F_p[k]),rL(\bigoplus_{i=1}^N \F_p[k_i])) \cong
rL(\bigoplus_{i=1}^N \F_p[k_i])_k.$$
Thus we get the free restricted Lie-algebra on $N$ generators in degree $k$
and the operations reduce to a restricted
Lie-structure on  $\mathfrak{g}$.
\end{proof}

\part{Examples} \label{part:examples}

\section{$E_{n+1}$-homology of free graded commutative
algebras} \label{sec:freegraded}
In the following, we will consider free graded commutative algebras on
one generator. For the general case note that working over a field
ensures that $E_{n}$-homology of a tensor product of graded
commutative algebras can be computed from the $E_{n}$-homology of
the tensor factors: if $A_*,B_*$ are two graded commutative algebras,
then $E_{n}$-homology of $\overline{A_*\otimes B_*}$ can be
identified with Hochschild homology of order $n$ with coefficients in
the ground field $k$:
$$ H^{E_n}_*(\overline{A_*\otimes B_*}) \cong \HH_{*+n}^{[n]}(A_*
\otimes B_*;k)$$
which is defined as the homotopy groups of some simplicial set arising
as the evaluation of a certain $\Gamma$-module $\mathcal{L}(A_* \otimes
B_*;k)$ on the simplicial $n$-sphere,
$$\HH_{*+n}^{[n]}(A_* \otimes B_*;k) \cong \pi_{*+n}\mathcal{L}(A_* \otimes
B_*;k)(\mathbb{S}^n).$$
The latter is isomorphic to
$$  \pi_{*+n}(\mathcal{L}(A_*;k)(\mathbb{S}^n) \otimes
\mathcal{L}(B_*;k)(\mathbb{S}^n))$$
and hence the K\"unneth theorem expresses this in terms of tensor
products of Hochschild homology groups of order $n$. For more
background on Hochschild homology of order $n$ see \cite{P} or
\cite[p.~207]{LR}.

\subsection{Characteristic zero, $n \geq 1$}
Let $A=\Q[x]$ with the generator $x$ being of degree zero, thus $H_*A=A$.

We know that $E_{n+1}$-homology of the non-unital algebra
$\overline{\Q[x]}$ is isomorphic
to the shifted $\Q$-homology of $K(\Z,n+1)$ (see \cite{C} or
\cite{LR}):
$$H^{E_{n+1}}_*(\overline{\Q[x]}) \cong H_{*+n+1}(K(\Z,n+1),\Q). $$
We know that rationally the cohomology of $K(\Z,n+1)$ is an exterior
algebra on a generator in degree $n+1$ if $n+1$ is odd and is a polynomial
algebra on a generator of degree $n+1$ for even $n+1$ and thus by
dualizing we get the answer for $H_{*+n+1}(K(\Z,n+1),\Q)$.

For odd $n+1$ the polynomial algebra $\Q[x]$ is actually a free
$n$-Gerstenhaber algebra because the bracket $[x,x]$ has to be
trivial. Therefore the derived functors of $n$-Gerstenhaber
indecomposables are trivial but in degree zero where we obtain the
$\Q$-span of $x$ and thus
$$ H^{E_{n+1}}_*(\overline{\Q[x]}) \cong \begin{cases} \Q, & *=0, \\ 0, &
  *>0, \end{cases}$$
and this agrees with the above result.

For arbitrary degrees the result is essentially the same:
\begin{prop}
The free graded commutative algebra $S(x_j)$ on a generator $x_j$ of
degree $j\in \Z$ has as $E_{n+1}$-homology:
$$ H^{E_{n+1}}_*(\overline{S(x_j)}) \cong \Sigma^{-(n+1)} \overline{S(x_{j+n+1})}.$$
In particular we can identify $E_{n+1}$-homology of
$\overline{S(x_j)}$ with the shifted homology of the
Eilenberg-MacLane space
$K(\Z, j+n+1)$ if $j+n+1>0$.
\end{prop}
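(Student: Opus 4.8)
The strategy is to reduce to the computation of the derived functors of $n$-Lie indecomposables of the free $n$-Lie algebra on a single generator $x_j$, and then to use the suspension equivalence (Remark~\ref{rem:suspensions} and Corollary~\ref{cor:shift}) to compare with the known degree-zero case. First I would observe that $S(x_j)$ is a free $n$-Gerstenhaber algebra: the free $n$-Lie algebra $nL(\Q x_j)$ on one generator is just $\Q x_j$ together with possibly the bracket $[x_j,x_j]$, and by the graded antisymmetry relation in Definition~\ref{def:nlie} with $p=q=|x_j|+n$ we get $[x_j,x_j]=-(-1)^{p^2}[x_j,x_j]$, so $[x_j,x_j]=0$ unless $p$ is even, i.e.\ unless $|x_j|+n$ is even; in the latter case $[x_j,x_j]$ could be a nonzero element of degree $2j+n$, but it must square to zero and in fact the free $n$-Lie algebra on one generator is at most two-dimensional. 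So $S(x_j)$ need not literally be free on $\Q x_j$; rather it is $S(V)$ for $V=nL(\Q x_j)$, which is precisely the situation of Lemma~\ref{lem:1gres} and Corollary~\ref{cor:e2smooth}. Hence the $E^2$-term of the resolution spectral sequence of Theorem~\ref{thm:e2rational} is $(\mathbb{L}_pQ_{nL}(V))_q = (\mathbb{L}_pQ_{nL}(nL(\Q x_j)))_q$, and since $nL(\Q x_j)$ is a \emph{free} $n$-Lie algebra its derived indecomposables are concentrated in homological degree zero, where they return $\Q x_j$.

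Next I would run the suspension comparison. By Corollary~\ref{cor:shift} (taking, say, $\ell$ chosen so that the generator lands in degree zero, i.e.\ $\ell = j+n$), we have
$$E^2_{p,q} = (\mathbb{L}_pQ_{\ell L}(\Sigma^{n-\ell}V))_{q+n-\ell},$$
and $\Sigma^{n-\ell}V = \Sigma^{-j}nL(\Q x_j) = \ell L(\Q x_0)$ is the free $\ell$-Lie algebra on a degree-zero generator. So the $E^2$-term equals $\Q$ concentrated in bidegree $(0, \ell-n) = (0,j)$ and vanishes otherwise; therefore the spectral sequence collapses at $E^2$ with no room for differentials, giving $H^{E_{n+1}}_*(\overline{S(x_j)}) \cong \Q$ concentrated in internal degree $j$ when $*=0$ — wait, this needs care: the abutment is graded by $s+t$, so the single class sits in total degree $p+q = 0 + j$. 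Translating back through the desuspension identity $H^{E_{n+1}}_q(\overline{S(x_j)}) \cong \Sigma^{-(n+1)}\overline{S(x_{j+n+1})}$ means identifying the answer with a copy of $x_{j+n+1}$ placed in homological degree $-(n+1)$; since $\overline{S(x_{j+n+1})}$ is the augmentation ideal of a polynomial (or exterior) algebra on one generator in degree $j+n+1$, desuspending $n+1$ times recovers exactly the pattern of powers $x^k$ for $k\geq 1$, each contributing in homological degree $k(j+n+1)-(n+1) = (k-1)(j+n+1) + j$.

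To finish, I would verify that this matches the known rational homology of Eilenberg--MacLane spaces. When $j+n+1>0$, classical work (cited as \cite{C}, \cite{LR}) gives $H^{E_{n+1}}_*(\overline{S(x_j)}) \cong H_{*+n+1}(K(\Z,j+n+1);\Q)$, and $H^*(K(\Z,m);\Q)$ is exterior on a degree-$m$ class if $m$ is odd and polynomial if $m$ is even; dualizing and desuspending by $m=n+1$ (after noting $j = m - (n+1)$) reproduces exactly $\Sigma^{-(n+1)}\overline{S(x_{j+n+1})}$. The main obstacle I anticipate is purely bookkeeping: keeping straight the three gradings (simplicial/homological degree $p$ or $s$, internal degree $q$ or $t$, and the shift by $n+1$ from desuspending the bar construction) and confirming that the free $n$-Lie algebra on one generator is small enough — in particular that in the "bad parity" case where $[x_j,x_j]\neq 0$, $S(V)$ is still free as an $n$-Gerstenhaber algebra so that Corollary~\ref{cor:e2smooth} applies and the derived indecomposables still collapse. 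Once the reduction to $\ell L(\Q x_0)$ is in place, there are no differentials and the result is immediate; alternatively one can bypass the spectral sequence entirely and simply cite the Eilenberg--MacLane computation together with the degree-shift equivalence $nL \cong \Sigma^{\ell-n}\,\ell L\,\Sigma^{n-\ell}$.
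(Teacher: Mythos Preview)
There is a genuine gap. You conflate the free graded commutative algebra $S(x_j)$ (which, as an $E_{n+1}$-algebra, carries the \emph{trivial} $n$-Gerstenhaber structure) with the free $n$-Gerstenhaber algebra $S(nL(\Q x_j))$. These coincide only when $nL(\Q x_j)=\Q x_j$, i.e.\ when $[x_j,x_j]=0$ in the free $n$-Lie algebra. (Incidentally your parity is reversed: with $p=j+n$, graded antisymmetry gives $[x_j,x_j]=-(-1)^{p^2}[x_j,x_j]$, so $[x_j,x_j]=0$ when $p$ is even, not odd.) In the case $j+n$ odd, the free $n$-Lie algebra has a nonzero bracket $[x_j,x_j]$ and hence $S(nL(\Q x_j))$ has an extra algebra generator in degree $2j+n$; it is a strictly larger algebra than $S(x_j)$. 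So when you invoke Corollary~\ref{cor:e2smooth} with $V=nL(\Q x_j)$ you are computing the $E_{n+1}$-homology of the wrong algebra, and your conclusion that the $E^2$-term is a single copy of $\Q$ in bidegree $(0,j)$ is false in that parity: the correct answer is infinite-dimensional (a polynomial algebra on an even-degree generator after the shift).

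The fix, and the paper's route, is to apply Corollary~\ref{cor:e2smooth} with the correct $V$: namely the \emph{abelian} $n$-Lie algebra $\Q x_j$, since $S(x_j)=S(V)$ with its trivial bracket is induced from the trivial $n$-Lie structure on $V$. Then one must compute $\mathbb{L}_sQ_{nL}$ of an abelian one-dimensional $n$-Lie algebra, which is not concentrated in degree zero. Using Corollary~\ref{cor:shift} and the $\mathrm{Tor}$-interpretation of Remark~\ref{rem:lieviator}, the enveloping algebra of the abelian Lie algebra $\Sigma^n\Q x_j$ is $S(x_{n+j})$, so $E^2_{s,q}\cong \mathrm{Tor}^{S(x_{n+j})}_{s+1}(\Q,\Q)_{q+n}\cong S^{s+1}(x_{n+j}[1])_{q+n}$. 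This is one-dimensional when $n+j$ is even (exterior algebra) and contributes in every length $s+1$ when $n+j$ is odd, giving exactly $\Sigma^{-(n+1)}\overline{S(x_{j+n+1})}$ in both parities. Your final paragraphs already sense the mismatch (``a copy'' versus ``the pattern of powers $x^k$''); the discrepancy is precisely the wrong choice of $V$.
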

\begin{proof}
The $E^2$-term of the resolution spectral sequence can be identified
with
$$E^2_{s,q} \cong \mathrm{Tor}_{s+1}^{S(x_{n+j})}(\Q,\Q)_{q+n} \cong
S^{s+1}(x_{n+j}[1])_{q+n}.$$
The internal degree of $x_{n+j}[1]$ is still $n+j$ but for forming the
free graded commutative algebra, $S(x_{n+j}[1])$, $x_{n+j}[1]$ is
viewed as a generator in degree $n+j+1$. By $S^{s+1}$ we denote the
monomials of length $s+1$.
Powers of $x_{n+j}[1]$ are trivial if $n+j$ is even thus we get a
single contribution from $s=0$ of internal degree $n+j = q+n$.

In the odd case, we
obtain the condition that the
internal degree of $x_{n+j}[1]^{s+1}$ has to be $q+n$, thus
$(s+1)(n+j) = q+n$. Therefore we get that $s+q$ has to be $sn+sj+s+j$
as claimed.
\end{proof}

\begin{rem} \label{rem:shuffleforfreecom} The isomorphism
$$ H^{E_{n+1}}_*(\overline{S(x_j)}) \cong \Sigma^{-(n+1)} \overline{S(x_{j+n+1})}$$
is not only an additive isomorphism but an isomorphism of algebras if
we consider $H^{E_{n+1}}_*(\overline{S(x_j)}) $ equipped with the
multiplicative structure arising for example from computing
$H^{E_{n+1}}_*(\overline{S(x_j)}) $ via the iterated bar construction
$B^{n+1}$ as in \cite{F3}: If  $j+n+1$ is even and we denote the
suspension of an element $x$ by $sx$, generating cycles in
$B^{n+1}(\overline{S(x_j)})$ are of the form
$$s(s^n x_j) \otimes \ldots \otimes s(s^n x_j) \in (\Sigma
B^n(\overline{S(x_j)}))^{\otimes r}$$
with $s^n x_j \in \Sigma^n \overline{S(x_j)} \subset B^n
(\overline{S(x_j)})$.  One easily calculates that
$$(s(s^n x_j))^{\otimes r} \cdot (s(s^n x_j))^{\otimes s} = {r+s
  \choose r }( s(s^n x_j))^{\otimes s+r}$$
with respect to the shuffle product on $B^{n+1}(\overline{S(x_j)})$,
hence $H^{E_{n+1}}_*(\overline{S(x_j)}) $ is, up to suspension,
isomorphic to a polynomial algebra over $\Q$.
\end{rem}
\subsection{Characteristic two}
For a generator, $x_0$, in degree zero, we have that $E_2$-homology of
the non-unital polynomial algebra on $x_0$ over a field $k$ is (up to
a $2$-shift in degree) the homology with $k$-coefficients of $K(\Z,2)
= \mathbb{C}P^\infty$. It turns out that shifting the degree of the polynomial
generator down to degree minus one, trades the complex numbers for the
reals. More generally, we compute $E_2$-homology for every polynomial
algebra $\F_2[x_n]$ with a generator of degree $n \in \Z$. We always
assume that the $1$-restricted Lie structure on the polynomial algebra
is trivial. Note that the suspension of a $1$-restricted Lie algebra is a
restricted Lie algebra, similar to \ref{rem:suspensions}.
\begin{prop}
Up to a shift, $E_2$-homology of a polynomial algebra $\F_2[x_{-1}]$
is isomorphic to the homology of $\R P^\infty$:
$$ H^{E_2}_s(\overline{\F_2[x_{-1}]}) \cong H_{s+1}(K(\Z/2\Z,1);\F_2).$$
\end{prop}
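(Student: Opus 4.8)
The plan is to run the resolution spectral sequence of Theorem~\ref{thm:e2term} for $\bar A_* = \overline{\F_2[x_{-1}]}$, exactly as in the characteristic-zero computation above, and to identify its $E^2$-term with a $\mathrm{Tor}$ over a (restricted) enveloping algebra. First I would note that $\F_2[x_{-1}]=S(x_{-1})$ with the trivial $1$-restricted Lie structure, so Remark~\ref{rem:e2smoothchartwo} applies: the $E^2$-term collapses to $(\mathbb{L}_pQ_{1rL}(V))_q$ with $V$ the trivial $1$-restricted Lie algebra on a generator in degree $-1$. By the suspension isomorphism between $1$-restricted Lie algebras and restricted Lie algebras (the characteristic-two analogue of Remark~\ref{rem:suspensions}, cf.\ \cite[III~\S 15]{CLM}), this becomes derived indecomposables of the trivial \emph{restricted} Lie algebra $W$ on a generator in degree $0$, shifted by one: $E^2_{s,q}\cong (\mathbb{L}_sQ_{rL}(W))_{q+1}$.

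Next I would invoke the $\mathrm{Tor}$-interpretation of restricted Lie homology from Remark~\ref{rem:lieviator}: $\mathbb{L}_sQ_{rL}(W)\cong \mathrm{Tor}^{\mathfrak{U}(W)}_{s+1}(\F_2,\F_2)$, where $\mathfrak{U}(W)$ is the restricted enveloping algebra of the abelian restricted Lie algebra $W=\F_2\langle w\rangle$ in degree $0$. For a one-dimensional abelian restricted Lie algebra over $\F_2$ with a \emph{trivial} restriction $\xi(w)=0$, the restricted enveloping algebra is $\F_2[w]/(w^2)$, the dual numbers (this is the place where the difference between $x_0$ and $x_{-1}$ disappears — internally it is the same algebra, only the internal grading differs by the suspension). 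Then $\mathrm{Tor}^{\F_2[w]/(w^2)}_{*}(\F_2,\F_2)$ is the classical divided-power (here just polynomial, over $\F_2$) computation: it is $\F_2$ in each homological degree, with the class in $\mathrm{Tor}_{s+1}$ sitting in internal degree $s+1$ times the degree of $w$. Assembling, $E^2_{s,q}$ is $\F_2$ precisely when $q+1=(s+1)\cdot 0 +$ (the grading bookkeeping for $w$ in degree $0$), i.e.\ the spectral sequence is concentrated on a single line and must collapse at $E^2$, yielding $H^{E_2}_s(\overline{\F_2[x_{-1}]})\cong \F_2$ for each $s\geq 0$.

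Finally I would identify the right-hand side. The mod-$2$ homology of $K(\Z/2,1)=\R P^\infty$ is $\F_2$ in every non-negative degree, so $H_{s+1}(K(\Z/2,1);\F_2)=\F_2$ for all $s\geq -1$, and in particular for all $s\geq 0$; matching degrees with the shift-by-one built into the $E^2$-term identification gives the stated isomorphism $H^{E_2}_s(\overline{\F_2[x_{-1}]})\cong H_{s+1}(K(\Z/2,1);\F_2)$. (One can also see this geometrically: $\overline{\F_2[x_{-1}]}$ is, up to the relevant desuspension, the $E_2$-chains realizing $\R P^\infty$ as an iterated loop space input, paralleling the $\C P^\infty=K(\Z,2)$ statement for $x_0$.)

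The main obstacle is the bookkeeping of the three shifts that all interact here: the degree $-1$ of the polynomial generator, the single internal shift $[1]$ coming from passing through the bar/resolution ($\mathbb{L}_sQ$ lands with a $+1$ in homological-to-internal degree), and the $\Sigma$ relating $1$-restricted Lie algebras to restricted Lie algebras. Getting the net shift right is what makes the answer come out as $\R P^\infty$ rather than $\C P^\infty$ or something off by a degree, so I would be careful to track these exactly as in the characteristic-zero proposition above, where the analogous computation gave $\mathrm{Tor}^{S(x_{n+j})}_{s+1}(\Q,\Q)_{q+n}$. Once the shifts are pinned down, the collapse of the spectral sequence is automatic since everything is concentrated on one diagonal line, and no extension or differential problems arise over $\F_2$.
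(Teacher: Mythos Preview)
Your proposal is correct and follows essentially the same route as the paper: apply Remark~\ref{rem:e2smoothchartwo} to reduce to $(\mathbb{L}_sQ_{1rL})(\F_2 x_{-1})$, suspend to pass to restricted Lie algebras on a degree-zero generator, use the $\mathrm{Tor}$-interpretation over the restricted enveloping algebra $\F_2[w]/(w^2)$, and observe that the $E^2$-page is concentrated on the line $q=-1$ so the spectral sequence collapses. The only cosmetic difference is that the paper records the identification $\F_2[x_0]/x_0^2 \cong \F_2[C_2]$ and reads off $\mathrm{Tor}^{\F_2[C_2]}_{s+1}(\F_2,\F_2)=H_{s+1}(C_2;\F_2)$ directly as group homology, whereas you compute the same $\mathrm{Tor}$ by hand and match with $H_*(\R P^\infty;\F_2)$ afterwards.
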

\begin{proof}
According to \ref{rem:e2smoothchartwo} our $E^2$-term is given by
$$E^2_{p,q} = ((\mathbb{L}_p Q_{1rL})(\F_2 x_{-1}))_q.$$
Using the suspension, we obtain from the $\F_2$-analogue of
\ref{cor:shift} that
$$\Sigma(\mathbb{L}_pQ_{1rL})(\F_2 x_{-1})
\cong (\mathbb{L}_p Q_{rL})(\Sigma
\F_2 x_{-1}) \cong
(\mathbb{L}_pQ_{rL})(\F_2x_0)$$
where $x_0$ is a generator in degree zero.

As the restricted Lie-structure on
$\F_2x_0$ is trivial, its restricted enveloping algebra is
$$\mathfrak{U}_r(\F_2x_0) \cong \F_2[x_0]/x_0^2 \cong \F_2[C_2].$$

Therefore
$$ (\mathbb{L}_s Q_{rL})(\F_2 x_0)
\cong \mathrm{Tor}_{s+1}^{\F_2[C_2]}(\F_2,\F_2) \cong
H_{s+1}(C_2;\F_2) = H_{s+1}(\R P^\infty; \F_2).$$

Hence the $E^2$-term is concentrated in bidegrees
$(s+1,-1)$, thus $H_{s+1}(\R P^\infty; \F_2)$ is isomorphic to
$H^{E_2}_s(\overline{\F_2[x_{-1}]})$.
\end{proof}

In broader generality we can determine $E_2$-homology of $\F[x_j]$ for
arbitrary degree $j \in \Z$. Again varying the degree $j$ of $x_j$
results in a shift.

\begin{prop}
With respect to the multiplicative structure induced by the shuffle
product on the twofold bar construction we have
$$ H^{E_2}_*(\overline{{\F_2}[x_j]}) = \Sigma^{-2}
\overline{\Gamma_{\F_2}(x_{j+2})}.$$
where $\Gamma_{\F_2}(x_{j+2})$ is a divided power algebra on a
generator $x_{j+2}$ in degree $j+2$.
\end{prop}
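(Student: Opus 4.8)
The plan is to follow the strategy of the two preceding propositions. First I would identify the $E^2$-term of the resolution spectral sequence of Theorem \ref{thm:e2term}. Since the trivially structured $\F_2[x_j]=S(\F_2 x_j)$ is free graded commutative on the $1$-restricted Lie algebra $\F_2 x_j$ with trivial bracket and restriction, Remark \ref{rem:e2smoothchartwo} gives $E^2_{p,q}\cong(\mathbb{L}_pQ_{1rL})(\F_2 x_j)_q$. Applying the suspension exactly as in the proof of the $x_{-1}$ case (the $\F_2$-analogue of Corollary \ref{cor:shift}), we get $\Sigma(\mathbb{L}_pQ_{1rL})(\F_2 x_j)\cong(\mathbb{L}_pQ_{rL})(\F_2 x_{j+1})$, where now $\F_2 x_{j+1}$ carries the trivial \emph{restricted} Lie algebra structure. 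Its restricted universal enveloping algebra is the truncated polynomial algebra $\mathfrak{U}_r(\F_2 x_{j+1})\cong\F_2[x_{j+1}]/x_{j+1}^2$ on a generator of internal degree $j+1$; the restriction is zero, so there is no parity distinction here, in contrast to the rational situation.

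Next I would feed this into the $\mathrm{Tor}$-description of restricted Lie homology from Remark \ref{rem:lieviator}, which yields $(\mathbb{L}_sQ_{rL})(\F_2 x_{j+1})\cong\mathrm{Tor}^{\F_2[x_{j+1}]/x_{j+1}^2}_{s+1}(\F_2,\F_2)$. The periodic minimal free resolution $\cdots\xrightarrow{\cdot x_{j+1}}\F_2[x_{j+1}]/x_{j+1}^2\xrightarrow{\cdot x_{j+1}}\F_2[x_{j+1}]/x_{j+1}^2\to\F_2\to 0$ shows that this $\mathrm{Tor}$ is one-dimensional in homological degree $s+1$ and internal degree $(s+1)(j+1)$, and that as an algebra it is the divided power algebra $\Gamma_{\F_2}(\sigma x_{j+1})$ on a generator of homological degree $1$ and internal degree $j+1$. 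Translating back through the single suspension, $E^2_{p,q}\cong\F_2$ precisely at the bidegrees $(m-1,\,m(j+1)-1)$ for $m\geq 1$, i.e.\ in total degree $m(j+2)-2$. A direct check shows that no differential $d^r$ with $r\geq 2$ can connect two of these entries (this would force $r(j+2)=1$), so the spectral sequence collapses at $E^2$ and $H^{E_2}_*(\overline{\F_2[x_j]})$ is, additively, a copy of $\F_2$ in each degree $m(j+2)-2$, $m\geq 1$ — precisely $\Sigma^{-2}\overline{\Gamma_{\F_2}(x_{j+2})}$.

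To pin down the multiplicative structure coming from the shuffle product on the twofold bar construction, I would argue as in Remark \ref{rem:shuffleforfreecom}. The class $[x_j]=s x_j\in B^1(\overline{\F_2[x_j]})$ is a cycle, and over $\F_2$ one has $s x_j\cdot s x_j=0$ in $B^1$ for every $j$ (the two shuffle terms coincide and cancel in characteristic two, regardless of the parity of $j$), so $(s(s x_j))^{\otimes r}\in(\Sigma B^1(\overline{\F_2[x_j]}))^{\otimes r}\subset B^2(\overline{\F_2[x_j]})$ is a cycle of degree $r(j+2)$, representing, after the overall desuspension $\Sigma^{-2}$, the generator in degree $r(j+2)-2$ found above. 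The standard shuffle computation gives $(s(s x_j))^{\otimes r}\cdot(s(s x_j))^{\otimes r'}=\binom{r+r'}{r}(s(s x_j))^{\otimes(r+r')}$, and over $\F_2$ the graded algebra with basis $\{(s(s x_j))^{\otimes r}\}_{r\geq 0}$ and this product is by definition $\Gamma_{\F_2}(x_{j+2})$. Combined with the additive count above (which shows these classes already exhaust $H^{E_2}_*$), this gives the asserted isomorphism of algebras $H^{E_2}_*(\overline{\F_2[x_j]})\cong\Sigma^{-2}\overline{\Gamma_{\F_2}(x_{j+2})}$.

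The main obstacle I expect is bookkeeping rather than any deep new input: keeping the single suspension, the internal degree of $x_{j+1}$, the homological shift $s\mapsto s+1$ in the $\mathrm{Tor}$-description, and the final $\Sigma^{-2}$ all consistent, and in particular checking carefully that no parity phenomenon enters over $\F_2$ — the point being that both $\mathfrak{U}_r(\F_2 x_{j+1})$ and the bar-construction cancellation $s x_j\cdot s x_j=0$ are insensitive to the parity of $j$, so the answer is uniformly a divided power algebra, whereas in the rational case of Remark \ref{rem:shuffleforfreecom} the parity decided between a polynomial and an exterior answer.
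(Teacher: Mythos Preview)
Your proposal is correct and follows essentially the same route as the paper: identify the $E^2$-term via Remark~\ref{rem:e2smoothchartwo}, suspend to pass from $1rL$ to $rL$, compute $\mathrm{Tor}$ over the truncated polynomial algebra $\F_2[x_{j+1}]/x_{j+1}^2$ using the periodic resolution, and then read off the multiplicative structure from the shuffle product on explicit cycles $(s(sx_j))^{\otimes r}$ in $B^2$. You add two small pieces of justification that the paper leaves implicit --- the check that no $d^r$ with $r\geq 2$ can connect two nonzero entries (your equation $r(j+2)=1$), and the observation that $sx_j\cdot sx_j=0$ in $B^1$ over $\F_2$ regardless of parity, which is why the tensors are cycles --- but the overall argument is the same.
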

\begin{proof}
As above we get that
$$\Sigma (\mathbb{L}_sQ_{1rL})(\F_2x_j) \cong
(\mathbb{L}_sQ_{rL})(\F_2x_{j+1})=
\mathrm{Tor}_{s+1}^{\mathfrak{U}(\F_2x_{j+1})}(\F_2,\F_2). $$
As the Lie-structure on $\F_2x_{j+1}$ is trivial, the enveloping
algebra is again a truncated polynomial algebra. We take the
periodic free resolution of $\F_2$ over $\F_2[x_{j+1}]/x_{j+1}^2$
$$ \ldots \ra \Sigma^{\ell(j+1)}\F_2[x_{j+1}]/x_{j+1}^2 \ra
\ldots \ra
\Sigma^{j+1}\F_2[x_{n+1}]/x_{j+1}^2 \ra \F_2[x_{j+1}]/x_{j+1}^2$$
where the maps are given by multiplication by $x_{j+1}$.
Tensoring this with $\F_2$ over $\F_2[x_{j+1}]/x_{j+1}^2$ gives a
chain complex with $\Sigma^{\ell(j+1)}\F_2$ in degree $\ell$ and
trivial differential, and therefore
$$\mathrm{Tor}_{s+1}^{\mathfrak{U}(\F_2x_{j+1})}(\F_2,\F_2)  \cong
\Sigma^{(s+1)(j+1)}\F_2.$$

Note that this
is concentrated in bidegree $(p,q)=(s,(s+1)(j+1)-1)$ and
contributes to total degree $p+q=(s+1)(j+2)-2$.

In order to determine the multiplicative structure on
$H^{E_2}_*(\overline{{\F_2}[x_j]})  \cong
H_*(\Sigma^{-2}B^2(\overline{{\F_2}[x_j]}))$ we note that just as in
\ref{rem:shuffleforfreecom} generating cycles in
$B^2(\overline{{\F_2}[x_j]})$ are of the form
$$s (sx_j) \otimes \ldots \otimes s(sx_j).$$
Again we see that
$$(s(s x_j))^{\otimes r} \cdot (s(s x_j))^{\otimes s} = {r+s \choose r
} (s(s x_j))^{\otimes s+r},$$
hence $\Sigma^2 H^{E_2}_*(\overline{{\F_2}[x_j]})$ is a divided power
algebra with $\gamma_r(s(s x_j))= (s(s x_j))^{\otimes r}$.
\end{proof}

\section{Reduced Hochschild cochains}
\label{sec:hhcochains}
Normalized Hochschild cochains $C^*(A,A)$ of an associative
$k$-algebra $A$ constitute an
algebra over an unreduced $E_2$-operad \cite{MS}. The induced
$1$-Gerstenhaber structure on
Hochschild cohomology was already described in
\cite{Ge}. Unfortunately the augmentation inherited from
$C^0(A,A) \cong
A \ra k$ is not compatible with the $E_2$-structure: Applying the
brace operations as described by McClure and Smith to cochains in the
augmentation kernel might yield the unit element in $C^0(A,A)$.
Hence we are led to consider the following variant: Denote by
$\bar{C}^*(A,A)$ the cochain complex with
$$ \bar{C}^k(A,A)= \begin{cases} C^*(\bar{A},\bar{A}), k >0,\\
C^0(\bar{A}, A), k=0,
\end{cases}$$
and set the braces to be zero whenever one of the arguments is in $k
\subset \bar{C}^0(A,A)$.  One easily checks that this is an
$E_2$-algebra and that the augmentation respects the $E_2$-structure.

The augmentation ideal of these \emph{reduced
  Hochschild cochains} is $C^*(\bar{A}, \bar{A})$. \ie,  the
normalized cochain complex computing Hochschild cohomology of $A$ with
coefficients in $\bar{A}$.

Note  that we consider Hochschild cochains, so with respect
to cohomological grading the Lie bracket on cohomology is of degree
$-1$.  In the
following we consider Hochschild cochains as a non-positively graded chain
complex, so that we get an ordinary $1$-restricted Gerstenhaber structure
on the homology.

\subsection{ Hochschild cochains for $k[x]$}

The following is a standard result.
\begin{lem} \label{lem:hochschild-tv}
For $A=k[x]$ reduced Hochschild cohomology is
$$ \HH^0(k[x],\overline{k[x]}) =  \HH^1(k[x],\overline{k[x]}) \cong
\overline{k[x]}.$$
\end{lem}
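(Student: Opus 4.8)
The statement is a computation of the Hochschild cohomology of the polynomial ring $A = k[x]$ with coefficients in the augmentation ideal $\bar{A} = \overline{k[x]} = (x)$. The plan is to start from the well-known free resolution of $k[x]$ as a bimodule, namely the Koszul-type length-one complex
\[
0 \to k[x] \otimes k[x] \xrightarrow{\ x \otimes 1 - 1 \otimes x\ } k[x] \otimes k[x] \to k[x] \to 0,
\]
which is a projective $k[x]$-bimodule resolution since $k[x]$ is smooth of dimension one. Applying $\Hom_{k[x]\text{-bimod}}(-, \bar{A})$ to the two-term complex $k[x]\otimes k[x] \to k[x] \otimes k[x]$ and using the adjunction $\Hom_{\text{bimod}}(k[x]\otimes k[x], M) \cong M$, one obtains the two-term cochain complex
\[
\bar{A} \xrightarrow{\ \mu_x\ } \bar{A},
\]
concentrated in cohomological degrees $0$ and $1$, where the differential $\mu_x$ is multiplication by $x$ on $\bar{A} = (x) \subset k[x]$ (coming from $x\otimes 1 - 1\otimes x$ acting on the coefficient module, and both left and right actions on $k[x]$ agree since it is commutative, so the two contributions would cancel — hence one must be careful: the induced map is $m \mapsto xm - mx = 0$).

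Here is the key point I would make explicit: because $k[x]$ is \emph{commutative}, the left and right module structures on $\bar{A}$ coincide, so the map induced by $x\otimes 1 - 1 \otimes x$ is $m \mapsto xm - xm = 0$. Therefore the cochain complex has zero differential, and one reads off
\[
\HH^0(k[x], \bar{A}) \cong \ker(0) = \bar{A}, \qquad \HH^1(k[x], \bar{A}) \cong \mathrm{coker}(0) = \bar{A},
\]
and $\HH^i(k[x],\bar{A}) = 0$ for $i \geq 2$ since the resolution has length one. This gives both isomorphisms in the statement simultaneously. One should double-check that $\HH^0(k[x],\bar A) = \mathrm{Der}$-type invariants: indeed $\HH^0(A,M)$ is the set of $m \in M$ with $am = ma$ for all $a$, which for commutative $A$ is all of $M = \bar A$, consistent with the computation.

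There is essentially no hard part; the only thing to be careful about is the \textbf{reducedness bookkeeping}: the excerpt defines $\bar{C}^*(A,A)$ with $\bar{C}^0(A,A) = C^0(\bar A, A)$ but the augmentation ideal being $C^*(\bar A, \bar A)$, so the invariant named in the lemma, $\HH^*(k[x], \overline{k[x]})$, is the cohomology of the \emph{honest} Hochschild complex with coefficients in $\bar A$ (no further modification needed), and the two-term resolution argument above applies verbatim. Alternatively, one can cite this directly as the classical computation $\HH^*(k[x], M) \cong M \otimes_{k[x]} \Lambda^*_{k[x]}(\Omega^1_{k[x]/k})^\vee$ for smooth $k[x]$, with $\Omega^1_{k[x]/k} \cong k[x]$ free of rank one, giving $\HH^0 \cong M$ and $\HH^1 \cong M$ and nothing else, then specialize $M = \bar A$. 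I would present the explicit two-term-complex version since it is self-contained and makes the vanishing of the differential transparent.
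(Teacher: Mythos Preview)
Your argument is correct: the two-term Koszul bimodule resolution of $k[x]$ gives a cochain complex $\bar A \to \bar A$ with differential $m \mapsto xm - mx = 0$ (since $\bar A$ is a symmetric bimodule over the commutative ring $k[x]$), and the stated isomorphisms follow immediately. The momentary slip where you first write ``$\mu_x$ is multiplication by $x$'' is harmless since you correct it in the next line.

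The paper itself offers no proof at all: it simply records the lemma as ``a standard result'' and moves on. So there is nothing to compare approaches against; you have supplied the standard argument the paper omits. If you want to trim, you can drop the alternative HKR-style description at the end, since the explicit two-term computation already does the job.
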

In order to exploit our spectral sequence, we have to understand
the induced structure on Hochschild cohomology. Hochschild cohomology is concentrated in degrees zero and one, so the
multiplication (cup-product) gives rise to a square-zero
extension.

A derivation $f \in \Der(k[x],\overline{k[x]})$ can
be identified with $f(x)$. We denote by $p'$ the formal derivative of
a polynomial $p \in k[x]$.
\begin{lem} \label{lem:lieonhh}
The Lie bracket
$$\HH^1(k[x],\overline{k[x]}) \otimes \HH^1(k[x],\overline{k[x]}) \ra
\HH^1(k[x],\overline{k[x]})$$
is given by the usual Lie bracket of derivations, \ie,
$$[f,g] = g \circ f - f  \circ g = g'f - f'g,$$
whereas for $\alpha \in \overline{k[x]}\cong \HH^0(k[x],
\overline{k[x]})$ and $f \in \HH^1
(k[x],\overline{k[x]})$ the bracket is given by
$$ [f, \alpha]:=f(\alpha) = \alpha'f.$$
For $k=\mathbb{F}_2$ the restriction $\xi\colon
\HH^1(\F_2[x],\overline{\F_2[x]}) \ra \HH^1(\F_2[x],\overline{\F_2[x]})$ sends
$f$ to $f'f$.
\end{lem}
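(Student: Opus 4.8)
The plan is to carry out the computation at the level of the reduced Hochschild cochain complex, reading off the operations from the McClure--Smith model of the $E_2$-structure, and then to pass to cohomology. First I would record cochain representatives. Since $k[x]$ is commutative there are no non-zero inner derivations, so Lemma~\ref{lem:hochschild-tv} gives $\HH^1(k[x],\overline{k[x]})=\Der(k[x],\overline{k[x]})$, and such a derivation $f$ is determined by $u:=f(x)\in\overline{k[x]}$ through the Leibniz rule $f(p)=p'u$; likewise $\HH^0(k[x],\overline{k[x]})=\overline{k[x]}$, each element being represented by a $0$-cocycle and none being a coboundary. Under these identifications I must read off the Gerstenhaber bracket $\HH^1\otimes\HH^1\to\HH^1$, the bracket $\HH^1\otimes\HH^0\to\HH^0$, and, when $k=\F_2$, the restriction $\xi$ on $\HH^1$.

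For the brackets I would use that on $C^*(\overline{k[x]},\overline{k[x]})$ the Gerstenhaber bracket $[f,g]$ is obtained by signed antisymmetrization from the brace operation $f\{g\}(a_1,\dots)=\sum_i\pm f(a_1,\dots,g(a_i,\dots),\dots)$, and that the reduced modification of the cochains only affects braces in which the unit occurs as an argument, which never happens when composing derivations $\overline{k[x]}\to\overline{k[x]}$; so for us these are the usual Gerstenhaber operations. For two $1$-cochains $f,g$ one has $f\{g\}=f\circ g$ as a linear map, hence $[f,g]$ is represented by the commutator of the derivations $f$ and $g$, which is again a derivation; evaluating on the generator via $f(p)=p'u$, $g(p)=p'v$ gives $f(g(x))=v'u$ and $g(f(x))=u'v$, so $[f,g]$ corresponds to $g'f-f'g$. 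For $f\in\HH^1$ and $\alpha\in\HH^0$ the only surviving brace is $f\{\alpha\}=f(\alpha)$ (insert the $0$-cochain $\alpha$ into $f$; the reversed brace vanishes for degree reasons), whence $[f,\alpha]=f(\alpha)=\alpha'f$.

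For $k=\F_2$ I would identify the restriction. The $E_2$-structure endows $\HH^1(k[x],\overline{k[x]})=\Der(k[x],\overline{k[x]})$ with its standard restricted Lie algebra structure, whose restriction is the self-composite $f\mapsto f\circ f$; over $\F_2$ the composite $f\circ f$ is again a derivation because the two cross-terms in the Leibniz expansion of $f(f(ab))$ cancel, and on cochains $\xi$ is represented by the brace $f\{f\}=f\circ f$, corresponding to the non-trivial class of $H_1(E_2(2)/\Sigma_2;\F_2)$. Evaluating on the generator gives $\xi(f)(x)=f(u)=u'u$, that is $\xi(f)=f'f$; the restricted Lie identities $[\xi(f),g]=[f,[f,g]]$ and $\xi(f+g)=\xi(f)+\xi(g)+[f,g]$ then hold automatically, being those of the restricted Lie algebra $\Der(k[x],\overline{k[x]})$ over $\F_2$.

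The substance here is bookkeeping rather than a genuine obstacle: one must match the abstract $E_2$-homology operations with their cochain models (braces for the bracket, self-composition for the restriction), keep track of the degree shift produced by viewing Hochschild cochains as a non-positively graded complex, and pin down the sign conventions in the Gerstenhaber bracket. Once this is done, all three formulas fall out of evaluating the operations on the single algebra generator $x$ using the Leibniz rule.
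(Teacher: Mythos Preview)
Your treatment of the brackets is the same as the paper's: both reduce to Gerstenhaber's original cochain formulas, and the paper simply cites \cite{Ge} for this.

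For the restriction $\xi$ you take a genuinely different route. You identify $\xi$ directly at the cochain level as the self-brace $f\{f\}=f\circ f$, appealing to the McClure--Smith description of the $E_2$-action and the class in $H_1(E_2(2)/\Sigma_2;\F_2)$ that parametrizes the restriction. The paper instead argues axiomatically: it first checks by hand that $f\mapsto f'f$ satisfies the restriction identities of Definition~\ref{def:1rl} (using that second derivatives vanish over $\F_2$), then uses the relation $[\xi(a),b]=[a,[a,b]]$ with $a$ the derivation $x$ and the already-computed bracket to force $\xi(x)=x$, concluding that any restriction compatible with this bracket must coincide with $f\mapsto f'f$. Your argument is more direct and explains where $\xi$ comes from operadically, but it leans on the reader accepting that the relevant operadic class acts as the self-brace (a step you gesture at but do not spell out). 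The paper's argument stays entirely inside the algebra of derivations and yields uniqueness of the restriction for free, at the price of the extra comparison step; it never needs to name the cochain-level operation representing $\xi$.
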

\begin{proof}
The first facts can be found in \cite{Ge}. A direct calculation shows
that $\xi$ satisfies the properties of a restriction. To this end note
that second derivatives of polynomials in characteristic two
vanish. We also know that we actually have a ($1$-restricted)
Gerstenhaber structure on $\HH^*(\F_2[x], \overline{\F_2[x]})$, so the
restriction is
determined by $\xi(x)$. Then
$$ \xi(x)'g + \xi(x)g' = [\xi(x),g] = [x,[x,g]] = g + xg', \text{  for all } g$$
implies that $\xi(x) = x$, so our choice of $\xi$ is a unique
restriction on $\HH^*(\F_2[x],\overline{\F_2[x]})$.
\end{proof}
Note that the Lie-bracket is trivial on $\HH^0 \times \HH^0$ and so
is the restriction on $\HH^0$. In particular, $\HH^*$ is far from
being a free (restricted) $1$-Gerstenhaber algebra.

Let $k=\Q$. The Hochschild cohomology groups in question are
 $$ \HH^{*}(\Q[x],\overline{\Q[x]})_+ = \Q[x_0]\otimes\Lambda[y_{-1}],$$
 where $x_0 $ is identified with $x \in
 \HH^{0}(\Q[x],\overline{\Q[x]})$  and $y_{-1} $ with $x \in
 \HH^{1}(\Q[x],\overline{\Q[x]})$. The $1$-Lie structure described in
 Lemma \ref{lem:lieonhh} corresponds to setting $[y_{-1}, x_0]=x_0$,
 all other brackets can then be determined by using the Poisson
 relation. Hence applying  \ref{lem:1gres} yields the following.

\begin{thm} \label{thm:hhfree-onedim}
For $\Q[x]$ the $E_2$-homology of the reduced Hochschild
cochains on $\Q[x]$ is concentrated in degree $ -1$ with
$$ H_{-1}^{E_2}(\bar{C}^{*}(\Q[x], \Q[x]))\cong \Q.$$
\end{thm}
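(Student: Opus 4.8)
The plan is to compute the $E^2$-term of the resolution spectral sequence of Theorem \ref{thm:e2rational} applied to the $E_2$-algebra $\bar{C}^*(\Q[x],\Q[x])$, show that it is concentrated in a single total degree, and deduce the answer. By the discussion preceding the theorem we know the homology of the augmentation ideal, as a $1$-Gerstenhaber algebra (non-positively graded, so an honest $1$-Lie structure rather than a shifted one): it is $\HH^*(\Q[x],\overline{\Q[x]})\cong\Q[x_0]\otimes\Lambda[y_{-1}]$ with $x_0$ in degree $0$, $y_{-1}$ in degree $-1$, and Lie bracket determined by $[y_{-1},x_0]=x_0$ via the Poisson relation. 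So the first step is to write this algebra as $S(V)$ for a $1$-Lie algebra $V$, in order to apply Lemma \ref{lem:1gres} and Corollary \ref{cor:e2smooth}. The point is that $\Q[x_0]\otimes\Lambda[y_{-1}]=S(V)$ with $V$ the graded vector space on $x_0$ (degree $0$) and $y_{-1}$ (degree $-1$), and the $1$-Lie (graded Lie, since here $n=1$ would be the shift but we are in the non-positively graded ``ordinary'' convention, i.e.\ an unrestricted graded Lie algebra on a two-dimensional space) structure on $V$ is exactly $[y_{-1},x_0]=x_0$, $[x_0,x_0]=[y_{-1},y_{-1}]=0$; all brackets involving products in $S(V)$ are then forced by Poisson, so the Gerstenhaber structure on $\HH^*$ is the one induced from $V$. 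This is precisely the situation of Lemma \ref{lem:1gres}, so Corollary \ref{cor:e2smooth} gives $E^2_{p,q}\cong(\mathbb{L}_pQ_{L}(V))_q$, the derived indecomposables of the two-dimensional graded Lie algebra $V$.

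Next I would identify $V$: it is the $2$-dimensional Lie algebra spanned by $y_{-1},x_0$ with $[y_{-1},x_0]=x_0$ — the ``$ax+b$'' Lie algebra, graded so that the bracket is internal-degree-preserving (both sides of $[y_{-1},x_0]=x_0$ have degree $-1$). By Remark \ref{rem:lieviator}, $\mathbb{L}_sQ_L(V)\cong\mathrm{Tor}^{\mathfrak{U}(V)}_{s+1}(\Q,\Q)$ as internally-graded vector spaces, where $\mathfrak{U}(V)$ is the (ungraded-as-a-Lie-algebra, but internally bigraded) universal enveloping algebra. So the computation reduces to $\mathrm{Tor}^{\mathfrak{U}(V)}_*(\Q,\Q)$. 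The enveloping algebra $\mathfrak{U}(V)$ is the algebra on generators $x_0,y_{-1}$ with relation $y_{-1}x_0-x_0y_{-1}=x_0$ (PBW: underlying vector space $\Q[x_0]\otimes\Q[y_{-1}]$). To compute its Tor over $\Q$ I would use a small Koszul-type resolution: since $V$ is a $2$-dimensional Lie algebra, $\Q$ has a length-$2$ free $\mathfrak{U}(V)$-resolution (the Chevalley–Eilenberg / Koszul complex $\mathfrak{U}(V)\otimes\Lambda^*V$), namely
\[
0\to \mathfrak{U}(V)\xrightarrow{d_2}\mathfrak{U}(V)^2\xrightarrow{d_1}\mathfrak{U}(V)\to\Q\to 0,
\]
with $d_1$ given by right multiplication by $x_0$ and $y_{-1}$, and $d_2$ incorporating the bracket. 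Tensoring down with $\Q$ and tracking the internal degrees, the differentials in $\Q\otimes_{\mathfrak{U}(V)}(\mathfrak{U}(V)\otimes\Lambda^*V)=\Lambda^*V$ become the Chevalley–Eilenberg differential of the abelianization, which for the nonabelian $2$-dimensional Lie algebra $V$ has $H_0=\Q$ (degree $0$), $H_1=\Q\langle y_{-1}\rangle$ (degree $-1$, since $x_0$ is a bracket so it dies in $V/[V,V]$), and $H_2=0$ (the class $y_{-1}\wedge x_0$ maps to $x_0\neq 0$ under $d_2$, or rather: $\Lambda^2 V$ is one-dimensional spanned by $y_{-1}\wedge x_0$ in internal degree $-1$, and the induced map to $\Lambda^1V$ sends it to $[y_{-1},x_0]=x_0\neq0$). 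Hence $\mathrm{Tor}^{\mathfrak{U}(V)}_0(\Q,\Q)=\Q$ in degree $0$, $\mathrm{Tor}^{\mathfrak{U}(V)}_1(\Q,\Q)=\Q$ in degree $-1$, and $\mathrm{Tor}^{\mathfrak{U}(V)}_{\geq 2}(\Q,\Q)=0$.

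Feeding this back through Remark \ref{rem:lieviator}, only $\mathbb{L}_0Q_L(V)=\mathrm{Tor}_1$ is nonzero, and it equals $\Q$ in internal degree $-1$; all $\mathbb{L}_pQ_L(V)$ with $p\geq 1$ vanish. Therefore the $E^2$-page of the resolution spectral sequence is concentrated in the single spot $(p,q)=(0,-1)$ with value $\Q$, the spectral sequence collapses for trivial reasons, and $H^{E_2}_{s}(\bar C^*(\Q[x],\Q[x]))$ is $\Q$ for $s=-1$ and $0$ otherwise, which is the claim. The only place where care is genuinely needed — and what I expect to be the main obstacle — is bookkeeping the internal (chain) degrees throughout, in particular confirming that $\HH^*$ really is $S(V)$ for the two-dimensional $V$ as a Gerstenhaber algebra (not merely additively), so that Lemma \ref{lem:1gres} applies; and checking that the ``ordinary $1$-restricted/$1$-Gerstenhaber'' conventions used for non-positively graded cochains produce an ordinary (unrestricted, since $k=\Q$) graded Lie algebra, so that the Tor-interpretation of Lie homology in Remark \ref{rem:lieviator} is available in the form stated.
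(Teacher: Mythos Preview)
Your overall strategy is exactly the paper's: recognise $\HH^*(\Q[x],\overline{\Q[x]})_+\cong S(V)$ with $V=\Q\langle x_0,y_{-1}\rangle$ and $[y_{-1},x_0]=x_0$, invoke Lemma~\ref{lem:1gres}/Corollary~\ref{cor:e2smooth} to reduce the $E^2$-page to derived $1$-Lie indecomposables of $V$, and then pass to a Tor computation via Remark~\ref{rem:lieviator}. You also land on the right answer.

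The gap is in the Chevalley--Eilenberg step, and it is precisely the grading issue you flag at the end. Remark~\ref{rem:lieviator} applies to \emph{graded} Lie algebras, so one must first suspend (Corollary~\ref{cor:shift}): $\Sigma V$ has generators $x_1$ (odd) and $y_0$ (even) with $[y_0,x_1]=x_1$. In the graded enveloping algebra $\mathfrak{U}(\Sigma V)$ one has $2x_1^2=[x_1,x_1]=0$, so $x_1^2=0$; this is \emph{not} the ungraded $ax+b$ enveloping algebra with PBW basis $\Q[x_0]\otimes\Q[y_{-1}]$ that you write down. Correspondingly the Koszul/CE complex $\mathfrak{U}(\Sigma V)\otimes S((\Sigma V)[1])$ is infinite (the odd generator contributes a polynomial factor after the shift), so your ``length-$2$ free resolution'' argument does not apply and does not by itself establish $\mathrm{Tor}_{\geq 2}=0$. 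Your parenthetical ``both sides of $[y_{-1},x_0]=x_0$ have degree $-1$'' is a symptom of this conflation: as an ordinary bracket it is not degree-preserving, which is why the suspension is needed.

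The paper handles this by writing down an explicit infinite free $\mathfrak{U}(\Sigma V)$-resolution of $\Q$ following May, with two generators $a^{(i)}_{i-1},b^{(i)}_i$ in each homological degree $i\geq 1$; after tensoring down to $\Q$ the induced differential sends $a^{(i)}\mapsto (i-1)b^{(i-1)}$ and $b^{(i)}\mapsto 0$, so the higher homology cancels in pairs over $\Q$ and only $\mathrm{Tor}_1\cong\Q$ (internal degree $0$, desuspending to $-1$) survives. Plugging that computation in for your CE step makes your argument complete.
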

\begin{proof}
According to \ref{lem:1gres} the $E^2$-page of the spectral sequence
we consider is given by
$$E^2_{p,q} \cong (\mathbb{L}_pQ_{1L}(W))_q$$
with $1$-Lie structure on $W=\Q \langle x_0, y_{-1}\rangle$ given by
$[y_{-1}, x_0]=x_0$. Using the equivalence between the category of
$1$-Lie algebras and the category of Lie algebras this yields
$$E^2_{p,q} \cong (\Sigma^{-1} \mathrm{Tor}_{p+1}^{\mathfrak{U}(\Sigma W)}(\Q,\Q))_q,$$
where now $\Sigma W = \Q \langle x_1, y_0 \rangle$.
We use the resolution for graded Lie algebras given by May in \cite{May66}: Set
$$P_i:= \begin{cases}
\mathfrak{U}(\Sigma W), & i=0,\\
\Q \langle a^{(i)}_{i-1}, b^{(i)}_i  \rangle \otimes \mathfrak{U}(\Sigma W), &i>0,
\end{cases}
$$
with lower indices indicating the internal degree of the elements.
Define $P_1 \ra P_0$ by $a^{(1)}_0 \otimes 1 \mapsto y_0$ and
$b^{(1)}_1 \otimes 1 \mapsto x_1$. Define
$P_{i} \ra P_{i-1}$ by $a^{(i)}_{i-1}\otimes 1 \mapsto
b^{(i-1)}_{i-1}\otimes y_0 + (i-1)b^{(i-1)}_{i-1}\otimes 1-
a^{(i-1)}_{i-2}\otimes x_1$
and $b^{(i)}_{i} \otimes 1 \mapsto b^{(i-1)}_{i-1}\otimes x_1$ for $i>1$.
This is a $\mathfrak{U}(\Sigma W)$-free resolution of $\Q$, and hence
$E^2_{p,q}$ vanishes expect for $p=0$ and $q=-1$.
\end{proof}

\subsection{$\bar{C}^*(TV,TV)$ for $V$ a $\Q$-vector space of
  dimension at least two.}
Let $V$ be a fixed $\Q$-vector space of dimension at least $2$. After
adding a unit element, Hochschild cohomology of $TV$
with coefficients in $\bar{T}V$ can be identified as the
square-zero extension
$$ \HH^*(TV,\bar{T}V)_+ \cong \Q \rtimes M(-1)$$
where $M(-1) = \HH^{1}(TV,\bar{T}V) = \Der(\bar{T}V) / \lbrace
\text{inner derivations} \rbrace$ is concentrated in degree minus one
and $\Q $ is in degree zero. The first summand in the Hodge
decomposition of Hochschild homology is isomorphic to Harrison
homology which is
Andr\'e-Quillen homology up to a shift in degree. This allows us to
compute the input for the Blanc-Stover spectral sequence:
\begin{prop} \label{prop:aqctv}
$$ \AQ_*(\HH^*(TV,\bar{T}V)_+|\Q;\Q) \cong \HH^{(1)}_{*+1}(\Q \rtimes
M(-1);\Q)$$ and $ \HH_*^{(1)}(\Q \rtimes
M(-1);\Q)$ is additively isomorphic to the free graded Lie-algebra
generated by the graded vector space $M(-1)$.
\end{prop}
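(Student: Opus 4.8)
The plan is to prove the two assertions in Proposition \ref{prop:aqctv} separately, starting from the square-zero description $\HH^*(TV,\bar TV)_+ \cong \Q \rtimes M(-1)$ with $M(-1)$ concentrated in degree $-1$.

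For the first isomorphism, the key input is the identification of Andr\'e--Quillen homology of a commutative $\Q$-algebra with (a shift of) the first Hodge summand of Hochschild homology, i.e. Harrison homology: $\AQ_*(B|\Q;\Q) \cong \HH^{(1)}_{*+1}(B;\Q)$ for any commutative $\Q$-algebra $B$. Applying this directly to $B = \Q \rtimes M(-1)$ gives the claimed formula. The only point requiring a little care is that $M(-1)$ is graded and sits in negative degree, so I would note that the Harrison/\AQ\ comparison is insensitive to the internal grading and that the square-zero extension makes sense as a graded commutative $\Q$-algebra; no positivity is needed since we work over a field of characteristic zero.

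For the second assertion, the strategy is to compute $\HH^{(1)}_*(\Q \rtimes M(-1);\Q)$ directly using that the algebra is a \emph{square-zero} (trivial) extension. Over $\Q$, $\AQ_*(\Q \rtimes N|\Q;\Q)$ for a square-zero extension by a module $N$ can be computed by a free simplicial (or cofibrant) resolution: take a free graded commutative algebra surjecting onto $\Q \rtimes N$, and observe that because the multiplication on $N$ is trivial, the K\"ahler differentials of the relevant resolution, base-changed to $\Q$, assemble into the (shifted) free graded Lie algebra on $N$. Concretely, one can use the standard fact (going back to Quillen) that for the trivial square-zero extension the cotangent complex is the free Lie coalgebra / that $\mathbb{L}_* Q_a(\Q \rtimes N)$ is, up to shift, the free graded Lie algebra on $N$; equivalently one invokes the bar construction computing Harrison homology of a trivial extension, whose homology is the free Lie algebra on the module (Harrison's original computation, or the Koszul duality between the commutative and Lie operads over $\Q$). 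With $N = M(-1)$ this yields that $\HH^{(1)}_*(\Q \rtimes M(-1);\Q)$ is, additively, the free graded Lie algebra on $M(-1)$, as claimed.

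The main obstacle is the second assertion: one must pin down \emph{exactly} which free graded Lie algebra appears and with which degree conventions, since $M(-1)$ lives in degree $-1$ and both Harrison homology and \AQ\ carry their own shifts. I expect the cleanest route is to cite the standard computation of Andr\'e--Quillen homology of a trivial square-zero extension over a characteristic-zero field — which identifies $\bigoplus_s \AQ_s(\Q \rtimes N|\Q;\Q)$ with the underlying graded vector space of the free graded Lie algebra (or its shifted/suspended form) on $N$ — and then to reconcile the grading with the statement of the proposition, checking that the shift in $\AQ_* \cong \HH^{(1)}_{*+1}$ matches the internal degree $-1$ of $M(-1)$. Everything else (that the extension is square-zero, that we are over $\Q$ so the commutative and Lie operads are Koszul dual, that no convergence issues arise) is routine.
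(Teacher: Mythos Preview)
Your treatment of the first isomorphism matches the paper exactly: both invoke the characteristic-zero identification $\AQ_*(B|\Q;\Q)\cong\HH^{(1)}_{*+1}(B;\Q)$ applied to $B=\Q\rtimes M(-1)$.

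For the second assertion your route diverges from the paper's, and it is also where your sketch is thinnest. The paper does \emph{not} appeal to a pre-packaged computation of $\AQ_*$ of a trivial square-zero extension, nor to Koszul duality. Instead it proceeds in two concrete steps. First it computes the full Hochschild homology $\HH_*(\Q\rtimes M(-1);\Q)$: in the normalized complex the non-degenerate part in degree $n$ is $M(-1)^{\otimes n}$, and because the extension is square-zero the Hochschild boundary vanishes on it, so $\HH_*(\Q\rtimes M(-1);\Q)\cong T(M(-1))$ with tensor length giving the homological degree. Second, it extracts the first Hodge summand by applying the first Eulerian idempotent $e^{(1)}$ and invokes Reutenauer's theorem that the image of $e^{(1)}$ on a tensor algebra $TW$ is exactly the free Lie algebra $LW$. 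This simultaneously pins down the grading (tensor length $n$ corresponds to $\HH^{(1)}_n$), which is the point you flagged as your ``main obstacle''.

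Your alternative --- computing the cotangent complex of $\Q\rtimes N$ via a free resolution, or invoking Com/Lie Koszul duality --- is correct in principle, and would yield the same answer. But as written it is a citation rather than an argument: the sentence ``the K\"ahler differentials of the relevant resolution, base-changed to $\Q$, assemble into the (shifted) free graded Lie algebra'' is precisely the nontrivial claim, and you do not indicate how to verify it beyond calling it standard. The paper's approach buys you an explicit, short, reference-backed computation and resolves the degree bookkeeping for free; your approach would be cleaner only if you are willing to import the Koszul duality machinery or a precise statement about Harrison homology of trivial extensions, with the grading made explicit.
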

\begin{proof}
The first statement is a general fact about the relationship between
Andr\'e-Quillen homology, Harrison homology and Hochschild homology in
characteristic zero (see for instance \cite[4.5.13]{Loday}). Hochschild
homology of the graded square zero extension $\Q \rtimes M(-1)$ with
coefficients in $\Q$ is the homology of the complex
$$ \xymatrix@1{ {\ldots} \ar[r]^(0.3){b} & {\Q \otimes_\Q (\Q\rtimes
  M(-1))^{\otimes n}} \ar[r]^{b} & {\Q \otimes_\Q (\Q\rtimes
  M(-1))^{\otimes (n-1)}} \ar[r]^(0.8)b & {\ldots}}$$
where $\Q \otimes_\Q (\Q\rtimes M(-1))^{\otimes n}$ is in degree $n$
and the boundary map $b$ is an alternating sum of multiplication and
augmentation maps. This chain complex is the associated chain complex
of a simplicial graded $\Q$-vector space and its non-degenerate part in
degree $n$ is isomorphic to $M(-1)^{\otimes n}$. On that part, the
boundary is trivial and thus we get that
$$ \HH_*(\Q \rtimes M(-1);\Q) \cong T(M(-1))$$
where the $n$th homology group corresponds to tensors of length $n$.

The Hodge decomposition is given by an action of Eulerian idempotents on the
Hochschild chains and homology groups. The first idempotent,
$e_n^{(1)}\colon \HH_n(\Q \rtimes M(-1);\Q) \ra \HH_n(\Q \rtimes
M(-1);\Q)$ splits off Harrison homology. Reutenauer showed in
\cite[(2.2),(2.4)]{R} that the image of this idempotent applied to a tensor
algebra, $TW$, is precisely the free Lie algebra, $LW$.
\end{proof}
Note that in addition to the free graded Lie structure on Andr\'e-Quillen
homology we have the internal non-trivial (and non-free) $1$-Lie
structure coming from the first Hochschild cohomology group $M(-1)$.

With respect to this $1$-Lie structure
$\AQ_*(\HH^*(TV,\bar{T}V)_+|\Q;\Q)$ consists of the Lie subalgebra
$\AQ_0(\HH^*(TV,\bar{T}V)_+|\Q;\Q)= \HH^*(TV, \bar{T}V)$
and the ideal $\AQ_{*\geq 1}(\HH^*(TV,\bar{T}V)_+|\Q;\Q)$. In
 particular $$\mathbb{L}_s Q_{1L}   \HH^*(TV, \bar{T}V) =
 (\mathbb{L}_s Q_{1L})_0  \AQ_*(\HH^*(TV,\bar{T}V)_+|\Q;\Q).$$ Hence
 we can identify certain elements in the Blanc-Stover spectral
 sequence in the case $V =\Q \lbrace x,y \rbrace$.

\begin{lem} The Lie homology of $\HH^*(T\Q \lbrace x,y \rbrace,
  \bar{T}\Q \lbrace x,y \rbrace) $ is
$$\mathbb{L}_s Q_{1L}   \HH^*(T\Q \lbrace x,y \rbrace, \bar{T}\Q
\lbrace x,y \rbrace)  = \begin{cases}
\Q, & s=0,1,3,\\
0 & \text{else}.
\end{cases}$$
\end{lem}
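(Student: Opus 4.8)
The plan is to compute $\mathbb{L}_s Q_{1L}$ of the $1$-Lie algebra $\mathfrak{h} := \HH^*(T\Q\{x,y\}, \bar{T}\Q\{x,y\})$ via the Tor interpretation recalled in Remark \ref{rem:lieviator}. First I would use the equivalence between $1$-Lie algebras and graded Lie algebras from Remark \ref{rem:suspensions}: applying $\Sigma$ turns $\mathfrak{h}$ into an honest graded Lie algebra $\Sigma\mathfrak{h}$, and by the $\F_2$-style shift argument (the rational analogue of Corollary \ref{cor:shift}) we get $\Sigma \mathbb{L}_s Q_{1L}(\mathfrak{h}) \cong \mathbb{L}_s Q_L(\Sigma\mathfrak{h}) \cong \Sigma^{?}\,\mathrm{Tor}^{\mathfrak{U}(\Sigma\mathfrak{h})}_{s+1}(\Q,\Q)$, with the internal degrees bookkept carefully. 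So the task reduces to understanding the graded Lie algebra $\mathfrak{h}$ and computing the Tor-algebra of its universal enveloping algebra.

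The key structural input is the description of $\mathfrak{h}$ recalled just before the statement: $\HH^*(TV,\bar{T}V)_+ \cong \Q \rtimes M(-1)$ is a square-zero extension with $M(-1) = \HH^1(TV,\bar{T}V)$ in degree $-1$, and the $1$-Lie bracket restricted to $M(-1)$ is the commutator bracket of (outer) derivations of the free associative algebra $\bar{T}V$. For $V = \Q\{x,y\}$ this derivation Lie algebra is classical: the Lie algebra of derivations of a free associative algebra on two generators, modulo inner derivations, is well understood, and in fact $\Sigma\mathfrak{h}$ should be identifiable with a concrete finite-dimensional-per-degree graded Lie algebra whose enveloping algebra has a small free resolution of $\Q$. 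The natural approach is to exhibit an explicit free resolution of $\Q$ over $\mathfrak{U}(\Sigma\mathfrak{h})$ — analogous to May's resolution used in the proof of Theorem \ref{thm:hhfree-onedim} — or equivalently to identify $\mathfrak{U}(\Sigma\mathfrak{h})$ with an algebra (perhaps a polynomial-times-exterior algebra, or the enveloping algebra of a free Lie algebra on two generators modulo a relation) for which $\mathrm{Tor}(\Q,\Q)$ is already known. One then reads off that $\mathrm{Tor}_{s+1}$ is one-dimensional for $s = 0,1,3$ and zero otherwise; the appearance of $s=3$ but not $s=2$ suggests $\mathfrak{U}(\Sigma\mathfrak{h})$ behaves like an exterior algebra on a degree-one class together with one higher relation, i.e.\ a Koszul-type pattern truncated by a single quadratic-then-cubic syzygy.

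The main obstacle will be getting the Lie algebra $\mathfrak{h}$ explicitly enough: one must pin down $M(-1) = \Der(\bar{T}\{x,y\})/\{\text{inner}\}$ as a graded Lie algebra, including a basis in each degree and the bracket, and then verify that the square-zero summand $\Q$ in degree $0$ interacts with it only trivially (which it does, since $\Q$ is the augmentation part and $[\,\cdot\,,\Q] = 0$ by the augmentation axioms, so $\mathfrak{h}$ is really just $M(-1)$ with the zero bracket extended from a central degree-zero line — in fact $\HH^*$ being concentrated in two degrees forces the product to be square-zero, and I would double-check whether the degree-zero class is central or acts by the internal bracket $[f,\alpha] = \alpha' f$ from Lemma \ref{lem:lieonhh}). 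Once $\mathfrak{U}(\Sigma\mathfrak{h})$ is identified, the Tor computation is a finite bookkeeping exercise: build the resolution by hand in low degrees, check exactness, and tensor down to $\Q$. The internal grading must be tracked throughout to confirm that each nonzero Tor group sits in a single internal degree and hence contributes exactly one copy of $\Q$ to $\mathbb{L}_s Q_{1L}\mathfrak{h}$ for $s \in \{0,1,3\}$.
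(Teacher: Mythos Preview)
Your plan has a genuine gap: the Lie algebra $\mathfrak{h}=\HH^*(T\Q\{x,y\},\bar{T}\Q\{x,y\})$ is \emph{not} small. Since $T\Q\{x,y\}$ has center $\Q$, we have $\HH^0(TV,\bar{T}V)=0$ and $\mathfrak{h}=M(-1)=\Der(\bar{T}\Q\{x,y\})/\{\text{inner}\}$ sits entirely in internal degree $-1$. A derivation is determined by the images of $x$ and $y$ in $\bar{T}V$, so $\Der(\bar{T}\Q\{x,y\})$ is infinite-dimensional, and the quotient by inner derivations is still infinite-dimensional. After suspension you obtain an ordinary (ungraded) infinite-dimensional Lie algebra, and $\mathfrak{U}(\Sigma\mathfrak{h})$ is nothing like a polynomial-times-exterior algebra; there is no way to ``build the resolution by hand in low degrees'' or to read off $\mathrm{Tor}$ from a Koszul pattern. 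The analogy with Theorem \ref{thm:hhfree-onedim} breaks down because there $\Sigma W$ was two-dimensional.

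What the paper does---and what you are missing---is to exploit an internal weight grading that has nothing to do with the cohomological degree. The derivations $D_{x,x}$ and $D_{y,y}$ (sending $x\mapsto x$, resp.\ $y\mapsto y$, and the other generator to zero) act diagonalisably on $\mathfrak{h}$ via the bracket, and the inner derivations form a sub-eigenspace. A standard result of Fuks then forces the Chevalley--Eilenberg homology to be concentrated in the weight-zero part, which is spanned by just $D_{x,x},D_{y,y},D_{x,y},D_{y,x}$. This collapses the Chevalley--Eilenberg complex to a finite complex of dimension at most $\binom{4}{s}$, and the answer $\Q$ in degrees $0,1,3$ drops out by direct inspection. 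Without this eigenspace reduction the computation is not a finite bookkeeping exercise at all.
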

\begin{proof}
We know that
$$ \HH^*(T\Q \lbrace x,y \rbrace, \bar{T}\Q \lbrace x,y \rbrace)  \cong
\Der (\bar{T}\Q \lbrace x,y \rbrace) )/ \lbrace \text{inner
  derivations} \rbrace$$
as a $1$-Lie algebra concentrated in internal degree $-1$. Consider
the derivations $D_{x,v}$ and $D_{y,w}$ defined by
$$D_{x,v}(x) = v, D_{x,v}(y)=0, \quad D_{y,w}(x) = 0, D_{y,w}(y) =w.$$
for $v,w \in \bar{T}\Q\lbrace x,y \rbrace$.
These form a basis of $\Der(\bar{T}\Q \lbrace x,y \rbrace)$ as a
vector space and are eigenvectors with respect to $[-, D_{x,x}]$ as
well as $[-,D_{y,y}]$.
Observe also that a typical inner derivation is of the form
$D_{x,vx}-D_{x,xv} + D_{y,vy} -D_{y,yv}$ and hence an eigenvector as
well. In particular $\Der (\bar{T}\Q \lbrace x,y \rbrace) )/ \lbrace
\text{inner derivations}\rbrace$ splits into eigenspaces with respect
to $[-, D_{x,x}]$ and $[-,D_{y,y}]$. Hence we can apply
\cite[1.5.2]{Fu}. Since possible eigenvalues are limited we see
that the Lie homology of
$\Der (\bar{T}\Q \lbrace x,y \rbrace) )/ \lbrace \text{inner
  derivations} \rbrace$ is the homology of the complex
$$\xymatrix{ 0 \ar[r] & \Q \lbrace D_{x,x} \wedge D_{y,y} \wedge
  D_{x,y} \wedge D_{y,x}\rbrace \ar[r] &
\Q \lbrace D_{x,x} \wedge D_{x,y} \wedge D_{y,x} , D_{y,y}
\wedge D_{x,y} \wedge D_{y,x} \rbrace \ar`r[d]`[l]`[lld]`[dl][dl] \\ &
 \Q \lbrace D_{x,x} \wedge D_{y,y}, D_{x,y} \wedge D_{y,x}
\rbrace\ar[r] & \Q \lbrace D_{x,x}, D_{y,y} \rbrace \ar`r[d]`[l]`[lld]`[dl][dl] \\
& \Q \lbrace D_{x,x}, D_{y,y} \rbrace \ar[r] & 0} $$
endowed with the usual differential of the Chevalley-Eilenberg
complex. Hence the claim follows.
 \end{proof}

\begin{prop} The $E^2$-page of the Blanc-Stover spectral sequence has
$$E^2_{s,0} = \begin{cases}
\Q & \text{in internal degree $-1$ for s=0},\\
\Q & \text{in internal degree $-2$ for s=1},\\
\Q & \text{in internal degree $-4$ for s=3},\\
0, & \text{else.}
\end{cases}$$
\end{prop}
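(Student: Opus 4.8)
The plan is to recognise the $t=0$ line of the Blanc--Stover $E^2$-page as the Lie homology of $\AQ_0$ and then to quote the preceding lemma, the only remaining work being a bookkeeping of internal degrees. First I would unravel $E^2_{s,0}$ exactly as in the discussion preceding that lemma. The composite functor spectral sequence for $Q_{1L}\circ Q_a$ has $E^2_{s,t}=\pi_s\pi^i_tQ_{1L}(B_{\bullet,\bullet})$, where $B_{\bullet,\bullet}$ is a DKS-cofibrant replacement of $Q_aY_\bullet$ for a cofibrant resolution $Y_\bullet\to\HH^*(T\Q\{x,y\},\bar T\Q\{x,y\})_+$ by simplicial $1$-Gerstenhaber algebras. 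For external degree $t=0$ this reads $E^2_{s,0}=\pi_sQ_{1L}(\pi^i_0B_{\bullet,\bullet})$, where $\pi^i_0B_{\bullet,\bullet}$ is a free simplicial resolution, by $\Pi$-$1$-Lie algebras, of $\AQ_0(\HH^*(T\Q\{x,y\},\bar T\Q\{x,y\})_+|\Q;\Q)$, and by Proposition \ref{prop:aqctv} this $\AQ_0$ is the $1$-Lie algebra $\HH^*(T\Q\{x,y\},\bar T\Q\{x,y\})$. Since we work over $\Q$, the $\Pi$-Lie structure on $\pi_0$ of a simplicial $1$-Lie algebra is just its underlying $1$-Lie structure, so $\bar Q_{1L}$ agrees with $Q_{1L}$ on this resolution and hence $E^2_{s,0}\cong\mathbb{L}_sQ_{1L}\HH^*(T\Q\{x,y\},\bar T\Q\{x,y\})$; this is precisely the identity $\mathbb{L}_sQ_{1L}\HH^*(TV,\bar TV)=(\mathbb{L}_sQ_{1L})_0\AQ_*(\HH^*(TV,\bar TV)_+|\Q;\Q)$ recorded before the preceding lemma.

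Next I would feed in the preceding lemma: for $V=\Q\{x,y\}$ it gives $\mathbb{L}_sQ_{1L}\HH^*(T\Q\{x,y\},\bar T\Q\{x,y\})\cong\Q$ for $s\in\{0,1,3\}$ and $0$ otherwise, which is the claimed pattern of (non)vanishing of $E^2_{s,0}$. To pin down the internal degrees I would use that $\HH^*(T\Q\{x,y\},\bar T\Q\{x,y\})=M(-1)$ is concentrated in internal degree $-1$: in the Chevalley--Eilenberg complex used in the preceding lemma (equivalently, via the $\mathrm{Tor}$-description of Remark \ref{rem:lieviator}) the group $\mathbb{L}_sQ_{1L}$ of a $1$-Lie algebra $\mathfrak g$ is a subquotient of $\Lambda^{s+1}\mathfrak g$, so for $\mathfrak g=M(-1)$ it lies in internal degree $-(s+1)$. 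Hence $E^2_{0,0}$, $E^2_{1,0}$ and $E^2_{3,0}$ sit in internal degrees $-1$, $-2$ and $-4$ respectively, which is the assertion.

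The genuine computation is the preceding lemma, which is available to us. The one step in this argument that I expect needs care is the passage from $\bar Q_{1L}$ to $Q_{1L}$ on the $\pi_0$-part --- that is, the statement that no honest $\Pi$-Lie homotopy operations interfere there, which rests on the characteristic-zero structure result for $\Pi$-Lie algebras (transported along the Quillen equivalence of Theorem \ref{thm:quillenequ}) --- together with the internal-degree bookkeeping; both should be routine.
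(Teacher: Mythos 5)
Your reduction of the $t=0$ row is exactly the route the paper (implicitly) takes: the identity $\mathbb{L}_s Q_{1L}\HH^*(TV,\bar{T}V)=(\mathbb{L}_sQ_{1L})_0\AQ_*(\HH^*(TV,\bar{T}V)_+|\Q;\Q)$ recorded just before the preceding lemma, the rational fact that no extra $\Pi$-operations interfere on the $\pi_0$-part, and then the lemma itself. That part of your write-up is fine and yields the pattern of copies of $\Q$ at $s=0,1,3$ and $0$ elsewhere.

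The gap is the internal-degree bookkeeping, which is the only substantive thing the statement adds beyond the lemma. For a $1$-Lie algebra the bracket raises internal degree by one, so $\mathbb{L}_sQ_{1L}(\mathfrak{g})$ is \emph{not} a subquotient of the graded exterior power $\Lambda^{s+1}\mathfrak{g}$ with internal degrees adding; by Remark \ref{rem:lieviator} together with Corollary \ref{cor:shift} (or Lemma \ref{lem:aquiBSE2}) it is $\Sigma^{-1}\mathrm{Tor}_{s+1}^{\mathfrak{U}(\Sigma\mathfrak{g})}(\Q,\Q)$, i.e.\ a subquotient of $\Sigma^{-1}\Lambda^{s+1}(\Sigma\mathfrak{g})$, with a \emph{single} desuspension independent of $s$ --- exactly the convention used in the proof of Theorem \ref{thm:hhfree-onedim}. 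Here $\mathfrak{g}=\HH^*(T\Q\lbrace x,y\rbrace,\bar{T}\Q\lbrace x,y\rbrace)$ is concentrated in internal degree $-1$, so $\Sigma\mathfrak{g}$ sits in degree $0$; equivalently, the canonical resolution $(1L\circ U)^{\bullet+1}\mathfrak{g}$ consists of free $1$-Lie algebras on degree $-1$ generators, and since an iterated degree-one bracket of degree $-1$ elements again has degree $-1$, the whole resolution is concentrated in internal degree $-1$. Your own cited tools therefore place all three nonzero groups in internal degree $-1$, not in degrees $-1,-2,-4$; the degrees $-(s+1)$ would only follow from a grading in which wedge length shifts the internal degree (i.e.\ treating the bracket as if it had internal degree $0$), and neither your argument nor the surrounding text supplies such a regrading --- indeed it sits in tension with the paper's own conventions in Corollary \ref{cor:shift} and Theorem \ref{thm:hhfree-onedim}, and it propagates into the bidegrees $x_{0,-1}$, $x_{1,-2}$ quoted in the remark that follows. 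So as written your proof establishes the vanishing pattern but not the claimed internal degrees; to close the gap you must either make explicit the (weight) grading in which the degree count $-1,-2,-4$ is meant, or conclude that the $t=0$ classes all lie in internal degree $-1$ and adjust the statement accordingly.
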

\begin{rem}
The generators for $s=0$ and $s=1$  in the
Blanc-Stover spectral sequence are permanent cycles and they cannot be
boundaries for degree
reasons. Therefore they give rise to  permanent
cycles $x_{0,-1}$ and $x_{1,-2}$ in the resolution spectral
sequence. If the reduced Hochschild cochains on $T\Q\lbrace x,y
\rbrace$ were free as an $E_2$-algebra,
then we would get something of rank $2$ as $E_2$-homology and this
could correspond to these two survivors, but a priori $x_{0,-1}$ and
$x_{1,-2}$ could be hit by differentials starting on elements in
bidegree $(r,-r)$ for some $r \geq 2$.

\end{rem}
\subsection{Group algebras}
Let $G$ be a discrete group and $k$ be a field. There is an
identification of Hochschild cohomology of the group algebra $k[G]$
with group cohomology
$$ \HH^*(k[G],\overline{k[G]}) \cong H^*(G;\overline{k[G]}^c),$$
where $\overline{k[G]}^c$ denotes $\overline{k[G]}$ with the
$k[G]$-action being induced by the conjugation action of $G$ on $G$.
We will consider cases where Hochschild cohomology results in an
\'etale algebra, so we need the following result.

\begin{lem} \label{lem:etale}
Let $k$ be a field of characteristic zero or two and let $A$ be an
augmented \'etale $k$-algebra. Then $\bar{A}$ has trivial derived
$1$-(restricted) Gerstenhaber indecomposables and trivial $E_2$-homology.
\end{lem}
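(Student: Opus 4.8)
The plan is to reduce everything to the fact that an augmented étale $k$-algebra $A$ is \emph{formally smooth} with vanishing module of Kähler differentials, so that its André--Quillen homology is trivial, and then feed this into the spectral sequences developed above. First I would recall that for an étale $k$-algebra $A$ one has $\Omega^1_{A|k}=0$ and, more strongly, $A$ is smooth, so the cotangent complex $L_{A|k}$ is contractible and hence $\AQ_*(A|k,k)=0$ (this works verbatim in characteristic zero; in characteristic two one uses that étale implies smooth so the simplicial resolution of $A$ by free commutative algebras can be chosen with $Q_a$ acyclic and with vanishing $\pi_*$ of the indecomposables in positive degrees, giving $\AQ_*(\bar A|k,k)=0$ as well — here $\AQ_0(\bar A|k,k)=\Omega^1_{A|k}=0$ is the key input). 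Note that the augmentation $A\to k$ makes $k$ itself an étale $A$-algebra, which is the precise setup in which these vanishing statements hold.

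Next I would run the Blanc--Stover composite functor spectral sequence from Theorem \ref{thm:bsss} in the relevant incarnation from the Proposition following it: its $E^2$-term is $(\mathbb{L}_s(\bar Q_{1rL})_t)(\AQ_*(\bar A|k,k))$ in characteristic two, and $(\mathbb{L}_s(\bar Q_{nL})_t)(\AQ_*(\bar A|k,k))$ (with $n=1$) rationally. Since $\AQ_*(\bar A|k,k)=0$ by the first step, this simplicial object is trivial, hence so is the whole $E^2$-page, and therefore $\mathbb{L}_*Q_{1rG}(\bar A)=0$ (respectively $\mathbb{L}_*Q_{1G}(\bar A)=0$): the derived $1$-(restricted) Gerstenhaber indecomposables vanish. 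Concretely one can also see this directly: a free simplicial resolution $P_\bullet\to A$ by free commutative algebras can, by smoothness, be chosen so that $Q_a P_\bullet$ is already weakly equivalent to the (trivial) $1$-Lie algebra $\Omega^1_{A|k}\otimes_A k=0$, and then applying $Q_{1rL}$ (resp.\ $Q_{1L}$) to a further resolution still yields something contractible.

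Finally I would plug this into the resolution spectral sequence of Theorem \ref{thm:e2term} (char.\ two) and Theorem \ref{thm:e2rational} (rational, $n=1$): its $E^2$-term is $E^2_{p,q}\cong(\mathbb{L}_pQ_{1rG}(H_*(\bar A)))_q$, which we have just shown to be zero. Since $H_*(\bar A)=\bar A$ concentrated in degree zero, there is nothing further to resolve, and convergence of the spectral sequence to $H^{E_2}_{p+q}(\bar A)$ forces $H^{E_2}_*(\bar A)=0$. I expect the main obstacle to be the characteristic-two bookkeeping: one must check that "étale" genuinely gives the acyclicity of $Q_a$ on a suitably chosen resolution (i.e.\ that formal smoothness plus unramifiedness really does kill all of $\AQ_*$ and not merely $\AQ_0$), and that the restricted-Lie homotopy operations $\bar Q_{1rL}$ contribute nothing once the input is trivial — but the latter is immediate since a trivial $\Pi$-algebra has trivial derived functors, and the former is classical (e.g.\ via the Jacobian criterion or the transitivity sequence for $k\to A\to k$).
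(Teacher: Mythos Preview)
Your proposal is correct and follows essentially the same route as the paper: \'etale implies vanishing Andr\'e--Quillen homology, the Blanc--Stover composite functor spectral sequence then forces the derived $1$-(restricted) Gerstenhaber indecomposables to vanish, and the resolution spectral sequence finishes off $E_2$-homology. The paper's proof is terser and leaves the last step implicit, whereas you spell it out and add (mostly unnecessary) worry about the characteristic-two case; once $\AQ_*=0$ the zero object is a valid DKS-resolution and there is nothing further to check.
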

\begin{proof}
As explained in \ref{rem:resolutions}, let $P_\bullet \ra A$ be a
simplicial resolution of $A$  by
free $1$-(restricted) Gerstenhaber algebras.  As $A$ is
\'etale, it has
trivial indecomposables and
$Q_a(P_\bullet)$ has trivial homotopy groups in all degrees. Therefore the
constant bisimplicial  $1$-(restricted) Lie algebra which is zero in
all bidegrees is a valid resolution of $Q_a(P_\bullet)$.
Application of the composite functor spectral sequence yields the result.
\end{proof}

Hochschild cochains on some group algebras have trivial $2$-fold
algebraic delooping:
\begin{prop} \label{prop:etalegrings}
Let $G$ be a finite group. If
\begin{enumerate}
\item
either $G$ is abelian, the order of $G$ is odd and $k= \F_2$,
\item
or if $k$ is algebraically closed and of characteristic two and the
order of $G$ is odd,
\item
or if $k$ is algebraically closed and of characteristic zero,
\end{enumerate}
 then
$$H^{E_2}_*(\bar{C}^*(k[G],k[G])) = 0, \text{ for all }
* \geq 0.$$
\end{prop}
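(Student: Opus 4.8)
The plan is to reduce each of the three cases to Lemma~\ref{lem:etale} by showing that under the stated hypotheses the Hochschild cohomology ring $\HH^*(k[G],\overline{k[G]})$ is (after adding a unit) an \'etale $k$-algebra with trivial $1$-(restricted) Lie structure. Once we know $\HH^*(k[G],\overline{k[G]})_+$ is \'etale and augmented, Lemma~\ref{lem:etale} immediately gives that $\bar{C}^*(k[G],k[G])$ has vanishing $E_2$-homology in all nonnegative degrees.

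First I would use the identification $\HH^*(k[G],\overline{k[G]}) \cong H^*(G;\overline{k[G]}^c)$ recalled just above, where the coefficients carry the conjugation action. For a finite group $G$ whose order is invertible in $k$ (which holds in case (1) since $|G|$ is odd and $k=\F_2$, in case (2) for the same reason, and in case (3) automatically), Maschke's theorem makes $k[G]$ semisimple, so all higher group cohomology vanishes and $H^*(G;\overline{k[G]}^c)$ is concentrated in degree zero, equal to the invariants $(\overline{k[G]}^c)^G$. Thus $\HH^*(k[G],\overline{k[G]})$ is concentrated in Hochschild degree $0$, i.e. in chain degree $0$ in our grading convention; in particular the $1$-Lie bracket, being of degree $-1$, is forced to be trivial on it, and likewise the restriction in characteristic two. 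So the only thing left is to check that the ring $\HH^0(k[G],\overline{k[G]})_+ \cong \HH^0(k[G],k[G])$, the center $Z(k[G])$, is an \'etale $k$-algebra. Since $k[G]$ is semisimple its center is a finite product of field extensions of $k$, hence separable, hence \'etale exactly when each factor is a \emph{separable} extension of $k$; in characteristic zero every finite extension is separable, and in characteristic two the factors are separable because $|G|$ is odd, so $k[G] \cong k[G]/\mathrm{rad}$ has no inseparability coming from $p=2$ (concretely, the Wedderburn decomposition of $k[G]$ for $k$ algebraically closed is a product of matrix algebras over $k$, with center $k^{\times r}$, visibly \'etale; and in case (1), $G$ abelian of odd order over $\F_2$, the center is $\F_2[G]$ itself, which is \'etale since $|G|$ is odd).

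The cases then differ only in how one sees \'etaleness of $Z(k[G])$: in case (2) and (3) with $k$ algebraically closed one invokes Wedderburn to get $Z(k[G]) \cong k^{r}$, a split \'etale algebra; in case (1) one observes $\F_2[G]$ for $G$ abelian of odd order is a product of finite separable extensions of $\F_2$ because $x^{|G|}-1$ is separable over $\F_2$ when $|G|$ is odd. In every case $\HH^*(k[G],\overline{k[G]})_+$ is an augmented \'etale algebra concentrated in degree zero with trivial Lie/restriction structure, so Lemma~\ref{lem:etale} applies and finishes the proof.

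The step I expect to be the main obstacle is making sure the \emph{augmented} \'etale hypothesis of Lemma~\ref{lem:etale} is genuinely met and that the augmentation used there is compatible with the $E_2$-structure as set up for reduced Hochschild cochains: one must check that the augmentation $\HH^*(k[G],\overline{k[G]})_+ \to k$ (projection onto the trivial summand / counit of $Z(k[G])$) is a morphism of $1$-(restricted) Gerstenhaber algebras, and that the vanishing of $E_2$-homology of $\bar{C}^*(k[G],k[G])$ — whose augmentation ideal is $C^*(\overline{k[G]},\overline{k[G]})$ — is controlled by the derived indecomposables of $\HH^*$ via the resolution spectral sequence of Theorem~\ref{thm:e2rational} (rationally) or Theorem~\ref{thm:e2term} (characteristic two) combined with the Blanc--Stover argument of Lemma~\ref{lem:etale}. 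Once the bookkeeping of which ideal and which augmentation is used is pinned down, the rest is a direct appeal to semisimplicity and Lemma~\ref{lem:etale}.
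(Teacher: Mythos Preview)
Your proposal is correct and follows essentially the same route as the paper: both reduce to showing that $\HH^*(k[G],\overline{k[G]})$ is concentrated in degree zero (by semisimplicity, since $|G|$ is invertible in $k$), identify it with the center $Z(k[G])$, check \'etaleness case by case (Wedderburn for the algebraically closed cases, direct inspection of $\F_2[G]$ for the abelian odd-order case), and then invoke Lemma~\ref{lem:etale}. Your extra remarks about the bracket and restriction being forced to vanish for degree reasons are correct but not needed, since Lemma~\ref{lem:etale} only uses the \'etale property of the underlying algebra; likewise, the compatibility worries you raise at the end are already absorbed into the setup of $\bar{C}^*(A,A)$ and the spectral sequence machinery, so no additional argument is required there.
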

\begin{proof}
If $G$ is finite and if the characteristic of $k$ is prime to $|G|$ or
the characteristic is zero, then $H^*(G,\overline{k[G]}^c)\cong
H^0(G,\overline{k[G]}^c) =
(\overline{k[G]}^c)^G \cong Z(\overline{k[G]})$. The multiplication
induced by the $E_2$-action is the usual one. In the first case this
center is
$\overline{\F_2[G]}$. As $G$ is finite abelian, it suffices to consider the case
$C_{p^r}$ for an odd prime $p$. But $\F_2[C_{p^r}]$ is
\'etale over $\F_2$.

In the last two cases $k[G]$ is isomorphic to a product of matrix
rings (Wedderburn) and hence the center
is $Z(k[G]) \cong \prod_r k$, where $r$ is the number of conjugacy
classes of $G$. This is again an \'etale $k$-algebra.
\end{proof}

\section{On the Hodge decomposition for higher order
Hochschild homology} \label{sec:hodge}
Over the rationals the operad $E_{n}$ is formal, \ie, there is a
quasi-isomorphism between $E_{n}$ and the operad of $(n-1)$-Gerstenhaber
algebras (see \cite{LV} for a nice overview on formality). As every
$\Q[\Sigma_r]$-module $G_{n-1}(r)$ is projective, this
quasi-isomorphism induces an isomorphism of operadic homology theories
between $E_{n}$-homology and $G_{n-1}$-homology. As a consequence, our
resolution spectral sequence has to collapse at the $E^2$-term and we
obtain
\begin{equation*}
\bigoplus_{p+q=\ell} (\mathbb{L}_pQ_{(n-1)G}(\bar{A}))_q \cong
H^{E_n}_\ell(\bar{A}).
\end{equation*}

For an augmented commutative $\Q$-algebra $A$, we can identify
$E_n$-homology with Hochschild homology of order $n$:
\begin{equation*}
H^{E_n}_*(\bar{A}) \cong \HH^{[n]}_{*+n}(A,\Q).
\end{equation*}
The latter groups possess a Hodge decomposition \cite[Proposition
5.2]{P}. For odd $n$ the Hodge summands of Hochschild homology of
order $n$ are a re-indexed version of the Hodge summands for ordinary
Hochschild homology:
\begin{equation*}
\HH^{[n]}_{\ell +n}(A;\Q) = \bigoplus_{i+nj=\ell+n} \HH^{(j)}_{i+j}(A;\Q).
\end{equation*}
However, for even $n$ the summands are only described in terms of
functor homology:
\begin{equation*}
\HH^{[n]}_{\ell +n}(A;\Q) = \bigoplus_{i+nj=\ell+n}
\mathrm{Tor}_i^\Gamma(\theta^j, \mathcal{L}(A,\Q)).
\end{equation*}
For $j=1$ the terms consist of Andr\'e-Quillen homology:
$$\mathrm{Tor}_i^\Gamma(\theta^1, \mathcal{L}(A,\Q)) \cong
\AQ_i(A|\Q;\Q).$$
For $i=0$ one obtains $\theta^j \otimes_\Gamma \mathcal{L}(A,\Q) \cong
\Q \otimes_A  \mathrm{Sym}^j_A(\Omega^1_{A|\Q})$, the $j$-th symmetric
power generated by the module of K\"ahler differentials.

\begin{thm} \label{thm:hodge}
Let $A$ be a commutative augmented $\Q$-algebra.
For all $\ell, k \geq 1$ and $m \geq 0$:
\begin{itemize}
\item
$$ \HH^{(\ell)}_{m+1}(A;\Q) \cong (\mathbb{L}_m
Q_{2kG}\bar{A})_{(\ell-1)2k}.$$
\item
$$ \mathrm{Tor}^\Gamma_{m- \ell+1}(\theta^{\ell}, \mathcal{L}(A;\Q))
\cong (\mathbb{L}_mQ_{(2k-1)G}\bar{A})_{(\ell-1)(2k-1)}.$$
\end{itemize}
Thus the Hodge summands of higher order Hochschild homology can be
identified with Gerstenhaber homology groups.

\end{thm}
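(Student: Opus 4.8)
The plan is to reduce the computation of the derived $(n-1)$-Gerstenhaber indecomposables of $\bar{A}$ to a $\mathrm{Tor}$-computation over a free graded-commutative algebra, using the machinery already assembled in Part 1. First I would apply the Blanc--Stover composite functor spectral sequence (Theorem \ref{thm:bsss}) to the factorisation $Q_{(n-1)G}=Q_{(n-1)L}\circ Q_a$ and to $C=\bar{A}$, regarded as an $(n-1)$-Gerstenhaber algebra concentrated in internal degree $0$: its $E^2$-term is $(\mathbb{L}_s\bar{Q}_{(n-1)L})_t(\AQ_*(A|\Q;\Q))$ and it converges to $\mathbb{L}_{s+t}Q_{(n-1)G}(\bar{A})$. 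The key structural observation is that, since $A$ and hence $\AQ_*(A|\Q;\Q)$ live in internal degree $0$ while the $(n-1)$-Lie bracket raises internal degree by $n-1>0$, the induced $(n-1)$-Lie-$\Pi$-algebra structure on $\AQ_*(A|\Q;\Q)$ is trivial --- here one uses that rationally a $\Pi$-(shifted-)Lie algebra is just a bigraded (shifted) Lie algebra, together with Theorem \ref{thm:quillenequ}. Consequently $\bar{Q}_{(n-1)L}$ is just $Q_{(n-1)L}$ on this input, the spectral sequence collapses (the $\Pi$-structure being trivial there is no room for differentials within a fixed internal weight), and $\mathbb{L}_m Q_{(n-1)G}(\bar{A})$ is the $m$-th derived functor of $Q_{(n-1)L}$ applied to the abelian simplicial $(n-1)$-Lie algebra whose homotopy is $\AQ_*(A|\Q;\Q)$.

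Next I would evaluate that derived functor. Fix a free simplicial resolution $P_\bullet\to A$ by polynomial $\Q$-algebras and set $\Omega:=\Omega^1_{P_\bullet|\Q}\otimes_{P_\bullet}\Q$, so that $\pi_*(\Omega)=\AQ_*(A|\Q;\Q)$, all in internal degree $0$. Using Lemma \ref{lem:aquiBSE2} I pass to ordinary graded Lie algebras via the $(n-1)$-fold suspension: $\Sigma^{n-1}\Omega$ is an abelian simplicial graded Lie algebra in internal degree $n-1$, and by the $\mathrm{Tor}$-interpretation of Remark \ref{rem:lieviator},
$$\Sigma^{n-1}\,\mathbb{L}_m Q_{(n-1)G}(\bar{A})\ \cong\ \mathrm{Tor}^{\mathfrak{U}(\Sigma^{n-1}\Omega)}_{m+1}(\Q,\Q).$$
Since the bracket is zero, $\mathfrak{U}(\Sigma^{n-1}\Omega)$ is the free graded-commutative algebra on $\Sigma^{n-1}\Omega$: a polynomial algebra when $n-1$ is even and an exterior algebra when $n-1$ is odd. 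Now I would invoke Cartan's computation of the homology of such algebras \cite{C} --- equivalently, Koszul duality together with the d\'ecalage isomorphism exchanging derived symmetric and exterior powers --- to identify the weight-$\ell$ summand of $\mathrm{Tor}^{\mathfrak{U}(\Sigma^{n-1}\Omega)}_{m+1}(\Q,\Q)$: it lies in internal degree $\ell(n-1)$ and equals $H_{m+1-\ell}(\Lambda^\ell\Omega)$ when $n-1$ is even, and $H_{m+1-\ell}(\mathrm{Sym}^\ell\Omega)$ when $n-1$ is odd. Desuspending by $\Sigma^{-(n-1)}$ brings the internal degree down to $(n-1)(\ell-1)$ and gives, with $n=2k+1$ resp. $n=2k$,
$$(\mathbb{L}_m Q_{2kG}\bar{A})_{(\ell-1)2k}\cong H_{m-\ell+1}(\Lambda^\ell\Omega),\qquad (\mathbb{L}_m Q_{(2k-1)G}\bar{A})_{(\ell-1)(2k-1)}\cong H_{m-\ell+1}(\mathrm{Sym}^\ell\Omega).$$

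Finally I would match the right-hand sides with the Hodge summands. The identification $\HH^{(\ell)}_{m+1}(A;\Q)\cong H_{m-\ell+1}(\Lambda^\ell\Omega)$ for ordinary Hochschild homology is classical (Quillen); unravelling Pirashvili's functor-homology description \cite{P} of the Hodge decomposition of higher order Hochschild homology yields $\mathrm{Tor}^\Gamma_{m-\ell+1}(\theta^\ell,\mathcal{L}(A;\Q))\cong H_{m-\ell+1}(\mathrm{Sym}^\ell\Omega)$ --- consistently with the edge cases $\ell=1$ (Andr\'e--Quillen homology) and $m=\ell$ ($\Q\otimes_A\mathrm{Sym}^\ell_A\Omega^1_{A|\Q}$) recalled just before the theorem. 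Combining these with the previous step proves both displayed isomorphisms. The answer being independent of $k$ is exactly the reflection on the $E_2$-page of the fact that the $\ell$-th Hodge summand of $\HH^{[n]}$ depends on the order $n$ only through a degree shift (Corollary \ref{cor:shift}, Lemma \ref{lem:aquiBSE2}), and via the collapse of the resolution spectral sequence (Theorem \ref{thm:e2rational}) the statement simultaneously refits the two decompositions of $\HH^{[n]}_{*+n}(A;\Q)\cong H^{E_n}_*(\bar{A})$: by internal weight on the $E_n$-homology side and by Hodge weight on the Hochschild side.

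The hard part will be the derived Koszul-duality step: correctly pinning down whether exterior or symmetric powers appear --- this is governed by the parity of $n-1$, i.e. by whether the enveloping algebra of $\Sigma^{n-1}\Omega$ is polynomial or exterior --- and getting the d\'ecalage shift and the internal degree $(n-1)(\ell-1)$ exactly right, together with a clean justification of the collapse of the Blanc--Stover spectral sequence in this weight-graded situation. A subsidiary point needing care is the verification that the $(n-1)$-Lie-$\Pi$-algebra structure on $\AQ_*(A|\Q;\Q)$ is genuinely trivial, not merely its binary bracket.
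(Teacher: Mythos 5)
Your computational core is essentially the paper's proof of its Theorem \ref{thm:hodge-aq}, not of Theorem \ref{thm:hodge}, and the step where you close the loop for the even case is exactly where the argument breaks. You compute $(\mathbb{L}_mQ_{(2k-1)G}\bar{A})_{(\ell-1)(2k-1)}\cong H_{m-\ell+1}(\mathrm{Sym}^\ell(\Omega^1_{P_\bullet|\Q}\otimes_{P_\bullet}\Q))$ and then assert that ``unravelling Pirashvili'' gives $\mathrm{Tor}^\Gamma_{m-\ell+1}(\theta^\ell,\mathcal{L}(A;\Q))\cong H_{m-\ell+1}(\mathrm{Sym}^\ell(\Omega^1_{P_\bullet|\Q}\otimes_{P_\bullet}\Q))$. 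But Pirashvili only provides the edge cases recalled before the theorem ($\ell=1$, where the Tor-groups are Andr\'e--Quillen homology, and Tor-degree $0$, where one gets $\Q\otimes_A\mathrm{Sym}^\ell_A\Omega^1_{A|\Q}$); the general identification of $\mathrm{Tor}^\Gamma_*(\theta^\ell,\mathcal{L}(A;\Q))$ with the homology of derived symmetric powers of K\"ahler differentials is precisely the new content of Theorem \ref{thm:hodge-aq}, which the paper deduces \emph{from} Theorem \ref{thm:hodge} together with the bicomplex computation. Deriving in the $\Gamma$-module variable and deriving the algebra $A$ are not interchangeable for free: one would need, at minimum, an acyclicity statement for $\mathrm{Tor}^\Gamma_{>0}(\theta^\ell,\mathcal{L}(P;\Q))$ on free commutative algebras $P$ plus a hyperhomology argument, none of which you supply or cite. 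As written, the second bullet of the theorem is obtained by assuming a statement at least as strong as the one to be proved.

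Two further points. First, your collapse of the Blanc--Stover spectral sequence is asserted rather than proved: triviality of the $\Pi$-Lie structure on $\AQ_*(A|\Q;\Q)$ does not by itself preclude differentials, since the $E^2$-term in a fixed internal degree $(\ell-1)n$ is spread over several bidegrees $(s,t)$. The paper avoids this delicacy in its proof of Theorem \ref{thm:hodge-aq} by working with the explicit bicomplex $(nG)^{\circ(r+1)}((SI)^{\circ(s+1)}(A))$, where the row-wise computation $\Sigma^{-n}S^{s+1}(\Sigma^n(SI)^{\circ(\bullet)}A[1])$ is concentrated in internal degree $sn$, so that degeneration follows from preservation of internal degree; if you want to keep the Blanc--Stover formulation you need an argument of this kind. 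Second, be aware that the paper's actual proof of Theorem \ref{thm:hodge} is of a quite different, purely formal nature: it compares the two direct sum decompositions of $H^{E_n}_*(\bar{A})\cong\HH^{[n]}_{*+n}(A;\Q)$ (formality/collapse of the resolution spectral sequence versus Pirashvili's Hodge decomposition) and runs an induction on $\ell$, using the stability isomorphism $(\mathbb{L}_mQ_{nG}\bar{A})_{qn}\cong(\mathbb{L}_mQ_{(n+2)G}\bar{A})_{q(n+2)}$ of Lemma \ref{lem:stability} to choose $k$ so that exactly one unidentified summand remains on each side in the relevant total degree; no identification with K\"ahler differentials is needed there at all. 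Your proposal would become a correct alternative proof of the first bullet (via the classical $\HH^{(\ell)}_{m+1}\cong H_{m-\ell+1}(\Lambda^\ell(\Omega^1_{P_\bullet|\Q}\otimes_{P_\bullet}\Q))$) once the collapse is justified, but for the second bullet you must either reproduce something like the paper's comparison-of-decompositions argument or independently prove the functor-homology identification you are currently taking for granted.
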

Of course, the convention is that negatively indexed Tor-groups vanish.

Note, that the case $\ell = 1$ comes for free:
The first Hodge summand is Andr\'e-Quillen homology,
$$\HH^{(1)}_{m+1}(A;\Q) \cong \AQ_m(A|\Q;\Q) \cong
\mathrm{Tor}_m^\Gamma(\theta^1, \mathcal{L}(A;\Q))$$
and this in turn is
$\mathbb{L}_mQ_{kG}(\bar{A})_0$ for all $m \geq  0$, $k \geq 1$.

For alternative approaches to the Hodge decomposition of higher order
Hochschild homology see \cite{bauer,Gi}.

In order to prove Theorem \ref{thm:hodge} we need a stability result. For
the remainder of this section $A\rightarrow \Q$ is an augmented commutative
$\Q$-algebra.

\begin{lem} \label{lem:stability}
The derived functors of Gerstenhaber indecomposables are stable in the
following sense:
$$(\mathbb{L}_mQ_{nG}\bar{A})_{qn} \cong
(\mathbb{L}_mQ_{(n+2)G}\bar{A})_{q(n+2)}. $$
\end{lem}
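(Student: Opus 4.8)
The plan is to exploit the suspension equivalence between the categories of shifted Lie algebras (Remark \ref{rem:suspensions}) and the fact, established in Corollary \ref{cor:shift} and Lemma \ref{lem:aquiBSE2}, that passing between $n$-Lie structures and $\ell$-Lie structures only produces an overall internal degree shift. First I would recall that $\mathbb{L}_m Q_{nG}(\bar A)$ is computed by taking a free simplicial resolution $P_\bullet \to \bar A$ in $n$-Gerstenhaber algebras, applying $Q_a$ to obtain a simplicial $n$-Lie algebra, and then running the Blanc--Stover machinery on $Q_{nL}$; equivalently, by Theorem \ref{thm:e2rational} the group $\mathbb{L}_m Q_{nG}(\bar A)$ is the abutment of the composite functor spectral sequence with $E^2$-term $(\mathbb{L}_s\bar Q_{nL})_t(\AQ_*(A|\Q;\Q))$. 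The essential point is that the Gerstenhaber resolution of $\bar A$ as a \emph{commutative} algebra — i.e.\ the underlying cofibrant simplicial commutative algebra — does not depend on $n$ at all; only the induced shifted Lie structure on indecomposables changes, and that change is implemented by $\Sigma^{\pm 2}$.

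Concretely, I would argue as follows. Choose a resolution $Y_\bullet \to \bar A$ by free $n$-Gerstenhaber algebras that is cofibrant as a simplicial commutative algebra (as in Lemma \ref{lem:aquiBSE2}), and form its $(n+2)$-Gerstenhaber analogue by transporting the commutative resolution verbatim and equipping the indecomposables with the $(n+2)$-Lie structure $\Sigma^{-2}\circ(\text{$n$-Lie})\circ\Sigma^{2}$ coming from the natural isomorphism of monads $(n+2)L\cong \Sigma^{-2}\,nL\,\Sigma^{2}$. Via the Quillen equivalence of Theorem \ref{thm:quillenequ} between simplicial $n$-Lie-$\Pi$ algebras and simplicial $(n+2)$-Lie-$\Pi$ algebras, a DKS-cofibrant replacement $B_{\bullet,\bullet}$ of $Q_a Y_\bullet$ in bisimplicial $n$-Lie algebras is carried to the DKS-cofibrant replacement $\Sigma^{-2}B_{\bullet,\bullet}$ on the $(n+2)$-side, exactly as in the proof of Lemma \ref{lem:aquiBSE2}. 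Then $Q_{(n+2)L}\circ\Sigma^{-2}\cong\Sigma^{-2}\circ Q_{nL}$ (the adjunction argument: trivial-structure functors commute with desuspension), so the homotopy groups computing $\mathbb{L}_mQ_{(n+2)G}\bar A$ are $\pi_s\pi_t^i\,\Sigma^{-2}Q_{nL}B_{\bullet,\bullet}$. Since regrading the internal nonsimplicial degree does not affect homotopy groups, this is just $\Sigma^{-2}$ applied to $\mathbb{L}_mQ_{nG}\bar A$ in each internal degree, which reads in components as
$$(\mathbb{L}_mQ_{(n+2)G}\bar A)_{q'} \cong (\mathbb{L}_mQ_{nG}\bar A)_{q'+2}.$$
Setting $q' = q(n+2)$ and noting $q(n+2) + 2 = qn + 2q + 2 = qn + 2(q+1)$, I would need to match the right-hand index with $qn$; the correct bookkeeping (tracking that the suspension of a simplicial sphere $\Q S^{m_i}(r_i)$ shifts $r_i$, and that $Q_{nG}$ of a free $n$-Gerstenhaber algebra on generators in internal degree $d$ lands in internal degree $d$ while $Q_{(n+2)G}$ of the suspended generators lands in degree $d$ again) shows that the ``diagonal'' internal degrees $qn$ and $q(n+2)$ correspond to one another under the shift, because a length-$(q{+}1)$ bracket monomial of $n$-Lie generators sits in total internal degree that scales linearly in the Lie-degree.

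The main obstacle I expect is precisely this last degree-bookkeeping: verifying that the internal degree shift induced by $\Sigma^{-2}$ on the \emph{entire} derived functor matches up $qn$ with $q(n+2)$ rather than introducing an off-by-a-multiple-of-$q$ error. The clean way to handle it is not to chase individual degrees but to invoke Corollary \ref{cor:shift} / Lemma \ref{lem:aquiBSE2} in the form already proved: those results say that the $E^2$-page of the $E_{n+1}$-spectral sequence in internal degree $q$ equals the $E^2$-page of the $E_{\ell+1}$-spectral sequence in internal degree $q + n - \ell$ after an overall $\Sigma^{\ell - n}$, applied with $\ell = n+2$. Spelled out, $(\mathbb{L}_pQ_{nL}(V))_q \cong \Sigma^{2}(\mathbb{L}_pQ_{(n+2)L}(\Sigma^{-2}V))_{q-2}$, and when $V$ is concentrated so that the relevant derived-functor classes live on the ``$q = qn$ diagonal'' this is exactly the claimed stability isomorphism. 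Thus the real content is already contained in the suspension-compatibility of the Blanc--Stover resolutions established earlier, and the proof amounts to unwinding the indexing once more in the present diagonal grading.
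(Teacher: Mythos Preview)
There is a genuine gap. The suspension machinery of Corollary \ref{cor:shift} and Lemma \ref{lem:aquiBSE2} produces a \emph{uniform} internal-degree shift: it relates $(\mathbb{L}_mQ_{nG}\bar A)_q$ to $(\mathbb{L}_mQ_{(n+2)G}(\Sigma^{-2}\bar A))_{q-2}$, with the input object itself suspended. In particular, your displayed isomorphism $(\mathbb{L}_mQ_{(n+2)G}\bar A)_{q'}\cong(\mathbb{L}_mQ_{nG}\bar A)_{q'+2}$ is not what the suspension argument yields --- the right-hand input is $\Sigma^{-2}\bar A$, not $\bar A$ --- and even if it did, a constant shift by $2$ cannot match $qn$ with $q(n+2)=qn+2q$. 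You notice this yourself (``I would need to match the right-hand index with $qn$''), but the subsequent appeal back to Corollary \ref{cor:shift} is circular: those results always trade a change in Lie-shift for a change in the input by suspension, never for a rescaling of the internal grading by weight.

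The paper's proof is far more elementary and sidesteps the Blanc--Stover machinery entirely. Since $\bar A$ is concentrated in internal degree $0$, every element of the standard resolution $(nG)^{\ell+1}(\bar A)$ is a polynomial in iterated $n$-brackets of degree-zero inputs, hence lives in internal degree $qn$ for some $q$; the same combinatorial expression with $n$-brackets replaced by $(n{+}2)$-brackets lives in $((n{+}2)G)^{\ell+1}(\bar A)$ in degree $q(n{+}2)$. Because $n$ and $n{+}2$ have the same parity, the sign conventions agree and this ``replace brackets'' map is an isomorphism of simplicial vector spaces commuting with the resolution structure, hence induces the claimed isomorphism on homology. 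The point is that this is a \emph{weight-graded} rescaling of internal degree, not a suspension, and it works precisely because $\bar A$ sits in a single internal degree.
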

\begin{proof}
We consider the standard resolution that calculates
$(\mathbb{L}_mQ_{nG}\bar{A})$. In simplicial degree $\ell$ and
internal degree $r$ this is
$(nG)^{\ell+1}(\bar{A})_r$. This resolution is concentrated in degrees
of the form $r=qn$ because iterated $n$-Lie brackets on degree zero
elements are concentrated in these degrees. We can identify
the terms $(nG)^{\ell+1}(\bar{A})_{qn}$ with the terms
$((n+2)G)^{\ell+1}(\bar{A})_{q(n+2)}$ where we just exchange $n$-Lie
brackets by $(n+2)$-Lie brackets and adjust the internal degrees.

This yields an isomorphism of resolutions and hence an isomorphism on
the corresponding homology groups.

\end{proof}
\begin{rem}
Note that there is no stability result when one passes from $n$ to
$n+1$: Take for
instance $A = \Q[x]$. For even $n$ this is a free $n$-Gerstenhaber
algebra but for odd $n$ it is not.
\end{rem}
\begin{proof}[Proof of Theorem \ref{thm:hodge}]
As the claim is clear for $\ell =1$, we do an induction on the label
of the Hodge summands. We start with the $2k$-Gerstenhaber case. Thus
assume that we know the claim for all
Hodge summands $\HH^{(j)}_p$ for all $1 \leq j \leq \ell$ and all $p
\geq 1$. Lemma \ref{lem:stability} allows us to choose $k$ such that
$1 \leq m < 2k$ and to consider
$H^{E_{2k+1}}_{m+2k\ell}(\bar{A})$:
$$ \bigoplus_{p+q=m+2k\ell} \mathbb{L}_pQ_{2kG}(\bar{A})_q \cong
H^{E_{2k+1}}_{m+2k\ell}(\bar{A})
\cong \bigoplus_{i+j(2k+1)=m+2k(\ell+1)+1} \HH^{(j)}_{i+j}(A;\Q).$$
The summands $\mathbb{L}_{m+2k(\ell-r)}Q_{2kG}(\bar{A})_{2kr}$ for $r
< \ell$ are already identified with Hodge summands. The remaining
non-trivial summand in $H^{E_{2k+1}}_{m+2k\ell}(\bar{A})$ is
$\mathbb{L}_{m}Q_{2kG}(\bar{A})_{2k\ell}$ and in the Hodge
decomposition we still have the summand for
$j = \ell +1$. In this case
$$ i + (\ell+1)(2k+1) = m+2k(\ell+1)+1.$$
Hence $i=m+1-(\ell+1)$ and $i+j=m+1$.

For the $(2k-1)$-Gerstenhaber case the argument is similar, but the
degree count is different: For $j=\ell+1$
we get
$$ i+(\ell+1)2(k-2) = m-\ell + 2(k-2)(\ell+1)$$
and thus $i = m-\ell$.
\end{proof}
\begin{rem}
\emph{A posteriori} Theorem \ref{thm:hodge} yields a description of
derived functors of $2k$-Gersten\-ha\-ber algebras in terms of (higher)
Andr\'e-Quillen homology: A classical spectral sequence argument
allows an identification of $\HH^{(\ell)}_{m+1}(A,\Q)$ with
$D^{(\ell)}_{m+1-\ell}(A;\Q)$ \cite[3.5.8,4.5.13]{Loday} which in turn is
$H_{m+1-\ell}((\Lambda^\ell_{P_*} \Omega^1_{P_*|\Q}) \otimes_{P_*} \Q)) \cong
H_{m+1-\ell}(\Lambda^\ell (\Omega^1_{P_*|\Q} \otimes_{P_*} \Q))$. Here
$P_*$ is a free simplicial resolution of $A$ in commutative
$\Q$-algebras, for instance $P_t = (SI)^{\circ (t+1)}(A)$. Thus we obtain
$$  H_{m+1-\ell}(\Lambda^\ell (\Omega^1_{P_*|\Q} \otimes_{P_*} \Q))
\cong (\mathbb{L}_mQ_{2kG}\bar{A})_{(\ell -1)2k}.$$
\end{rem}
We show in the following that the identification of the Hodge summands
follows independently from an easy spectral sequence argument. We are
also able to prove an analoguous result for the even case:

\begin{thm} \label{thm:hodge-aq}
For every augmented commutative $\Q$-algebra $A$ we can identify the Hodge
summands of Hochschild homology of order $2k$ for $k \geq 1$ as
\begin{equation*}
\mathrm{Tor}_{m+1-\ell}^\Gamma(\theta^\ell, \mathcal{L}(A;\Q)) \cong
(\mathbb{L}_mQ_{(2k-1)}\bar{A})_{(2k-1)(\ell-1)} \cong
H_{m-\ell+1}(\mathrm{Sym}^\ell (\Omega^1_{P_*|\Q} \otimes_{P_*} \Q)).
\end{equation*}
We also recover the identification for Hodge summands of Hochschild
homology of odd order:
$$ \HH_{m+1}^{(\ell)}(A;\Q) \cong
\mathbb{L}_mQ_{2kG}(\bar{A})_{2k(\ell-1)} \cong
H_{m-\ell+1}(\Lambda^{\ell} (\Omega^1_{P_*|\Q} \otimes_{P_*} \Q)).$$
\end{thm}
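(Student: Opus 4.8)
The plan is to prove the identification independently of the derived‑functor description, by computing the relevant $E_{n+1}$-homology ($n+1\in\{2k-1,2k\}$) directly from a free simplicial resolution of $A$ by \emph{polynomial} algebras. Fix such a resolution $P_\bullet\to A$ in augmented commutative $\Q$-algebras, say $P_t=(SI)^{\circ(t+1)}(A)$; then each $P_t=S(V_t)$ is free graded commutative on a graded $\Q$-vector space $V_t$ with $\Omega^1_{P_t|\Q}\otimes_{P_t}\Q\cong V_t$, so that $\Omega^1_{P_\bullet|\Q}\otimes_{P_\bullet}\Q$ is a model for the reduced cotangent complex computing $\AQ_*(A|\Q;\Q)$. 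Viewing $P_\bullet$ as a simplicial $E_{n+1}$-algebra, using that $|\overline{P_\bullet}|\to\bar A$ is a quasi-isomorphism and that $E_{n+1}$-homology is the homology of a functorial chain complex which preserves quasi-isomorphisms and realizations, filtration by simplicial degree produces a spectral sequence
\begin{equation*}
E^1_{s,t}=H^{E_{n+1}}_t(\overline{P_s})\ \Longrightarrow\ H^{E_{n+1}}_{s+t}(\bar A)\cong\HH^{[n+1]}_{s+t+n+1}(A;\Q),
\end{equation*}
with $d^1$ the alternating sum of the simplicial faces.

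Next I would identify the pages. Since $P_s=S(V_s)$ is free graded commutative, the tensor-product (K\"unneth) formula for $E_{n+1}$-homology of commutative algebras recalled above, together with the computation of $E_{n+1}$-homology of a free graded commutative algebra on one generator, gives $H^{E_{n+1}}_*(\overline{S(V_s)})\cong\Sigma^{-(n+1)}\overline{S(\Sigma^{n+1}V_s)}$. Its homogeneous piece $S^\ell(\Sigma^{n+1}V_s)$ of weight $\ell\ge1$ is $\mathrm{Sym}^\ell(V_s)$ when $n+1$ is even and $\Lambda^\ell(V_s)$ when $n+1$ is odd (sitting in the appropriate internal degree). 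Hence the weight-$\ell$ part of the $E^1$-page is the simplicial graded vector space $s\mapsto S^\ell(\Sigma^{n+1}V_s)$, and taking $d^1$-homology its $E^2$-part is $H_*(\mathrm{Sym}^\ell(\Omega^1_{P_\bullet|\Q}\otimes_{P_\bullet}\Q))$, resp.\ $H_*(\Lambda^\ell(\Omega^1_{P_\bullet|\Q}\otimes_{P_\bullet}\Q))$.

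The crucial step is degeneration at $E^2$. Being natural in the commutative algebra variable, the spectral sequence carries a compatible action of the $\lambda$-operations (equivalently the Eulerian idempotents) underlying the Hodge decomposition, so it splits as a direct sum over the weight $\ell$, and all differentials preserve the internal grading. Within a fixed weight and internal degree the $E^1$-page is concentrated in a single row, so any $d^r$ with $r\ge2$ would change the weight and therefore vanishes. Thus the weight-$\ell$ summand of $H^{E_{n+1}}_*(\bar A)$ is simultaneously $H_*(\mathrm{Sym}^\ell(\Omega^1_{P_\bullet|\Q}\otimes_{P_\bullet}\Q))$ (resp.\ $H_*(\Lambda^\ell(\cdots))$) and, via Pirashvili's Hodge decomposition of $\HH^{[n+1]}_{*+n+1}(A;\Q)$, the $j=\ell$ term $\mathrm{Tor}^\Gamma_\bullet(\theta^\ell,\mathcal{L}(A;\Q))$ for $n+1=2k$ and $\HH^{(\ell)}_\bullet(A;\Q)$ for $n+1=2k-1$; feeding in Theorem~\ref{thm:hodge} identifies both of these with $(\mathbb{L}_\bullet Q_{(2k-1)G}\bar A)_{(2k-1)(\ell-1)}$, resp.\ $(\mathbb{L}_\bullet Q_{2kG}\bar A)_{2k(\ell-1)}$. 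Matching homological degrees — the simplicial degree $m+1-\ell$ corresponds to $E_{n+1}$-degree $(m+1-\ell)+(n+1)(\ell-1)$ — then yields the stated chain of isomorphisms.

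I expect the main obstacle to be precisely this degeneration argument: one has to verify carefully that the $\lambda$-operations on higher order Hochschild homology are natural enough to act on the level-wise spectral sequence and to cut out $S^\ell(\Sigma^{n+1}V_s)$ as its weight-$\ell$ part on $E^1$. The remaining ingredients — existence and convergence of the spectral sequence, the $E^1$-computation via the free graded commutative case, and the degree bookkeeping — are routine; in the odd case the argument moreover gives an independent proof of the classical identification of the Hodge summands with the homology of the exterior powers of the cotangent complex.
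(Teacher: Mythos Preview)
Your approach is correct in outline and genuinely different from the paper's. You run the simplicial resolution spectral sequence for $E_{n+1}$-homology itself, identify the $E^1$-page via the computation of $H^{E_{n+1}}_*(\overline{S(V)})$, and then argue degeneration at $E^2$ through compatibility with the Hodge $\lambda$-operations; this establishes the outer isomorphism $\mathrm{Tor}^\Gamma(\theta^\ell,\mathcal{L}(A;\Q))\cong H_*(\mathrm{Sym}^\ell(\Omega^1_{P_\bullet|\Q}\otimes_{P_\bullet}\Q))$ (resp.\ $\Lambda^\ell$) directly, after which Theorem~\ref{thm:hodge} supplies the middle term. The paper instead works entirely on the level of derived $n$-Gerstenhaber indecomposables: it forms the bicomplex $C_{r,s}=Q_{nG}((nG)^{\circ(r+1)}((SI)^{\circ(s+1)}A))$, and compares the two associated spectral sequences. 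One collapses to $\mathbb{L}_*Q_{nG}(\bar A)$ on the $s=0$ line; the other has $E^2$ equal to $H_*(\mathrm{Sym}^\ell$ or $\Lambda^\ell)$ of the K\"ahler differentials, and degenerates because each horizontal homology group sits in a \emph{single internal degree} $sn$, so higher differentials (which preserve internal degree) have zero target. Theorem~\ref{thm:hodge} then provides the link to the Hodge summands, as in your argument.

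The trade-off is exactly the one you flagged. The paper's degeneration is a pure internal-degree count and needs no extra structure. Your degeneration, by contrast, genuinely requires that the Hodge idempotents act on the chain-level model for $\HH^{[n+1]}$ naturally in the commutative algebra variable, so that the filtration-by-simplicial-degree spectral sequence splits by weight; without this the potential differentials $d^{(n+1)q+1}$ between adjacent weights are not excluded by degrees alone. This naturality is true (the decomposition of the $\Gamma$-module $\mathcal{L}(-;\Q)$ is functorial, hence passes to the simplicial object $\mathcal{L}(P_\bullet;\Q)$), but it is an honest extra ingredient. What your route buys is a direct, resolution-free identification of the Hodge summands with $H_*(\mathrm{Sym}^\ell)$ or $H_*(\Lambda^\ell)$ of the cotangent complex, without ever passing through $\mathbb{L}_*Q_{nG}$; the paper's route buys a cleaner, self-contained degeneration. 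One small indexing slip: your $n+1\in\{2k-1,2k\}$ should read $n+1\in\{2k,2k+1\}$ to match the statement, since $Q_{(2k-1)G}$ goes with $E_{2k}$-homology and $Q_{2kG}$ with $E_{2k+1}$-homology.
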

\begin{rem}
The functor homology terms $\mathrm{Tor}_*(\theta^\ell,
\mathcal{L}(A;\Q))$ also describe the homology of the
$\ell$th homogeneous layer in the Taylor tower of the $\Gamma$-module
$\mathcal{L}(A;\Q)$, $D_\ell(\mathcal{L}(A;\Q))[1]$,
\cite[Proposition 4.7]{Ri}:
$$ H_*(D_ \ell(\mathcal{L}(A;\Q))[1]) \cong \mathrm{Tor}_*(\theta^\ell,
\mathcal{L}(A;\Q)).$$
Thus our results identifies these homology groups with derived
functors of $n$-Gerstenhaber indecomposables for odd $n$ and with the
homology of the $\ell$th symmetric power of the module of derived K\"ahler
differentials.
\end{rem}

\begin{proof}[Proof of Theorem \ref{thm:hodge-aq}]
Let $D_{*,*}$ be the bicomplex with $D_{r,s} = (nG)^{\circ
  (r+1)}((SI)^{\circ (s+1)}(A))$. Taking $n$-Gerstenhaber indecomposables
yields another bicomplex $C_{*,*}$ with $C_{r,s}= Q_{nG}(D_{r,s})
\cong (nG)^{\circ (r)}((SI)^{\circ (s+1)}(A))$:
$$\xymatrix{
{\vdots} \ar[d] & {\vdots} \ar[d] & {\vdots} \ar[d] & \\
{(SI)^{\circ (3)}(A)} \ar[d] & \ar[l] {(nG)((SI)^{\circ (3)}(A))} \ar[d] & \ar[l]
{(nG)^{\circ (2)}((SI)^{\circ (3)}(A))} \ar[d] & \ar[l] \ldots \\
{(SI)^{\circ (2)}(A)} \ar[d] & \ar[l] {(nG)((SI)^{\circ (2)}(A))} \ar[d] & \ar[l]
{(nG)^{\circ (2)}((SI)^{\circ (2)}(A))} \ar[d] & \ar[l] \ldots \\
{(SI)(A)} & \ar[l] {(nG)((SI)(A))} & \ar[l] {(nG)^{\circ (2)}((SI)(A))} &
\ar[l] \ldots
}$$
Taking vertical homology, $H^v_*$,  first and then horizontal
homology, $H^h_*$, gives
$$ H^h_r(H^v_s(C_{*,*})) \cong \mathbb{L}_rQ_{nG}(\bar{A})$$
concentrated in the $(s=0)$-line: the vertical homology groups
are trivial but for $s=0$ because $(SI)^{\circ (\bullet + 1)}(A)$ is a
resolution of $A$.

Switching the roles of vertical and horizontal homology gives
$$ H^v_r(H^h_s(C_{*,*})) \cong H_r\mathbb{L}_sQ_{nG}(SI)^{\circ (\bullet +
  1)}(A).$$
We know by Corollary \ref{cor:e2smooth} that
$\mathbb{L}_sQ_{nG}(SI)^{\circ (\bullet + 1)}(A)$ is
isomorphic to $\mathbb{L}_sQ_{nL}(SI)^{\circ (\bullet)}(A)$ and using the
suspension correspondence between $n$-Lie algebras and graded Lie
algebras we get
$$  \mathbb{L}_sQ_{nG}(SI)^{\circ (\bullet + 1)}(A) \cong
\Sigma^{-n}\mathbb{L}_sQ_{L}\Sigma^n (SI)^{\circ (\bullet)}(A).$$
Since $\Sigma^n (SI)^{\circ (\bullet)}(A)$ carries a trivial Lie
structure we can identify these groups as
$$ \Sigma^{-n}\mathrm{Tor}_{s+1}^{\mathfrak{U}(\Sigma^n(SI)^{\circ
    (\bullet)}(A))}(\Q,\Q) \cong
\Sigma^{-n}S^{s+1}(\Sigma^n(SI)^{\circ (\bullet)} A[1]).$$
Recall that $\Sigma^n(SI)^{\circ (\bullet)} A[1]$ is still concentrated in
internal degree $n$, but for the free graded commutative algebra
generated by it, $S(\Sigma^n(SI)^{\circ (\bullet)} A[1])$,
we consider its elements as being of degree $n+1$, thus the total
internal degree of elements in $\Sigma^{-n}S^{s+1}(\Sigma^n(SI)^{\circ
  (\bullet)} A[1])$ is $sn$.

For $n = 2k$ we therefore obtain
$$ \HH_{m+1}^{(\ell)}(A;\Q) \cong
\mathbb{L}_mQ_{2kG}(\bar{A})_{2k(\ell-1)} \cong
H_{m-\ell+1}(\Lambda^{\ell} (\Omega^1_{P_*|\Q} \otimes_{P_*} \Q))$$
because
$$ (SI)^{\circ (t)}(A) \cong Q_a((SI)^{\circ (t+1)}(A)) \cong
\Omega^1_{P_t|\Q} \otimes_{P_t} \Q$$
with $P_t = (SI)^{\circ (t+1)}(A)$.

For $n=2k-1$ however, we get symmetric powers of the K\"ahler
differentials and have
$$\mathrm{Tor}_{m+1-\ell}^\Gamma(\theta^\ell, \mathcal{L}(A;\Q)) \cong
(\mathbb{L}_mQ_{(2k-1)G}\bar{A})_{(2k-1)(\ell-1)} \cong
H_{m-\ell+1}(\mathrm{Sym}^\ell (\Omega^1_{P_*|\Q} \otimes_{P_*} \Q))$$
again with $P_t = (SI)^{\circ (t+1)}(A)$.
\end{proof}

\end{document}